\newtheorem{theorem}{Theorem}[section]
\newtheorem{lemma}[theorem]{Lemma}
\newtheorem{definition}[theorem]{Definition}
\newcommand{\N}{\mathbb N}
\newcommand{\R}{{\mathbb R}}
\newcommand{\dual}[2]{\langle #1,#2\rangle}
\newcommand{\Fk}{\Phi_k}
\numberwithin{equation}{section}
\newcommand{\diver}{\operatorname{div}}
\DeclareMathOperator*{\supp}{supp}
\newcommand{\dert}{\partial_t}
\newcommand{\thetah}{\hat\thet}
\newcommand{\thetat}{\tilde\thet}
\newcommand{\vc}[1]{{\bf #1}}
\newcommand{\vv}{\vc{v}}
\newcommand{\vn}{\vc{n}}
\newcommand{\vw}{\vc{w}}
\newcommand{\thet}{\vartheta}
\renewcommand{\S}{\mathbf{S}}
\newcommand{\D}{\mathbf{D}}
\newcommand{\dt}{\,{\rm d} t }
\newcommand{\dd}{\,{\rm d} }
\newcommand{\dx}{\,{\rm d} {x}}
\newcommand{\dxdt}{\dx  \dt}
\newcommand{\dtau}{\,{\rm d} \tau}
\newcommand{\G}{\mathcal{G}}
\newcommand{\al}{\alpha}
\newcommand{\etan}{\eta_n}
\newcommand{\QT}{Q_T}
\newcommand{\intT}[1]{\int_0^T #1  \dt}
\newcommand{\intnn}[1]{\int_0^{\infty} #1  \dt}
\newcommand{\intnO}[1]{\int_0^{\infty}\intO{ #1 }\dt}
\newcommand{\intO}[1]{\int_{\Omega} #1 \dx}
\newcommand{\intTO}[1]{\int_0^T \int_{\Omega} #1  \dxdt}
\newcommand{\inttO}[1]{\int_0^{\tau} \int_{\Omega} #1 \dxdt}
\newcommand{\ff}{f}
\newcommand{\nm}[1]{\|#1\|}
\newcommand{\T}{\mathcal{T}}
\newcommand{\tmin}{\underline{\thet}}
\newcommand{\tmax}{\overline{\thet}}
\newcommand{\tstar}{t^*}
\newcommand{\bb}{b}
\newcommand{\B}{B}
\newcommand{\blafnc}{correction function}
\newcommand{\bws}{$\bb$-weak solution}
\newcommand{\Lab}{L_{\alpha,\beta}}
\newcommand{\dist}{\operatorname{dist}}
\newcommand{\cF}{\mathcal{F}}
\newcommand{\cT}{\mathcal{T}}
\newcommand{\pk}[1]{{\color{red}#1}}
\begin{document}

\title[Stability of a non-Newtonian Navier-Stokes-Fourier system]{On the stability of solutions to  non-Newtonian Navier--Stokes--Fourier-like systems in the supercritical case}
\author[A.~Abbatiello]{Anna Abbatiello}
\address{Anna Abbatiello.
University of Campania “L. Vanvitelli”, Department of Mathematics and Physics, Viale A. Lincoln 5, 81100 Caserta, Italy. ORCID: https://orcid.org/0000-0001-5758-114X}   \email{\tt anna.abbatiello@unicampania.it}

\author[M.~Bul\'{\i}\v{c}ek]{Miroslav Bul\'{\i}\v{c}ek}
\address{Miroslav Bul\'{\i}\v{c}ek. Charles University,  Faculty of Mathematics and Physics, Mathematical Institute, Sokolovsk\'{a} 83, 18675 Prague, Czech Republic. ORCID: https://orcid.org/0000-0003-2380-3458}
\email{\tt mbul8060@karlin.mff.cuni.cz}

\author[P.~Kaplick\'{y}]{Petr Kaplick\'{y}}
\address{Petr Kaplick\'{y}. Charles University, Faculty of Mathematics and Physics, Department of Mathematical Analysis, Sokolovsk\'{a} 83, 18675 Prague, Czech Republic. ORCID: https://orcid.org/0000-0002-1997-859X
}
\email{\tt kaplicky@karlin.mff.cuni.cz}


\keywords{Navier--Stokes--Fourier equations, stability, equilibrium, non-Newtonian fluids}
\subjclass[2020]{35Q30, 35K61, 76E30, 37L15}
\thanks{The research activity of A.~Abbatiello is performed under the auspices of the Italian National Group for the Mathematical Physics (GNFM) of INdAM. M. Bul\'{\i}\v{c}ek and P. Kaplick\'{y} acknowledge the support of the projects  No. 20-11027X and No. 25-16592S financed by Czech Science Foundation (GA\v{C}R). M. Bul\'{\i}\v{c}ek is a member of the Ne\v{c}as Center for Mathematical Modeling.}

\begin{abstract}
We consider a three-dimensional domain occupied by a homogeneous, incompressible, non-Newtonian, heat-conducting fluid with prescribed nonuniform temperature on the boundary and no-slip boundary conditions for the velocity. No external body forces are assumed. The constitutive relation for the Cauchy stress tensor is assumed in a general form that includes, in particular, the power-law and Ladyzhenskaya models with the power-law exponent in the range where neither regularity, uniqueness, nor the validity of the energy equality is known to hold.

Nevertheless, we introduce a novel concept of solution suitable for this setting, which enables us to establish the existence of global-in-time solutions for arbitrary physically relevant initial data. A remarkable feature of this formulation is that the steady-state solution is nonlinearly stable: every such solution converges, in a suitable sense, to the steady state as time tends to infinity. This provides the first result that combines existence with long-time stability in this physically relevant yet mathematically challenging regime.
\end{abstract}

\maketitle

\section{Introduction}
We are interested in a modified Navier--Stokes--Fourier system with nonlinear viscosity
\begin{align}
\dert \vv + \diver(\vv \otimes \vv) -\diver \S &= - \nabla p,\label{MB1}\\
\diver \vv &= 0,\label{MB2}\\
\dert \thet + \diver ( \thet \vv) - \diver (\kappa(\thet)\nabla \thet) &= \S : \D \vv \label{MB3}
\end{align}
in $Q:=(0,\infty)\times \Omega$ with Lipschitz domain $\Omega\subset\R^3$. The system \eqref{MB1}--\eqref{MB3} is completed by the initial conditions
\begin{equation}\label{MB4}
\vv(0, \cdot)=\vv_0, \quad \thet(0, \cdot)=\thet_0 \qquad \textrm{ in } \Omega
\end{equation}
and by the boundary conditions
\begin{equation}
\label{MB5}
\vv=\vc{0}, \quad \thet=\thetah   \qquad \textrm{ on } (0, \infty)\times \partial \Omega.
\end{equation}
Here, $\vv$ is the velocity field, $p$ is the pressure and $\thet$ is the temperature. The material is described by the heat conductivity $\kappa:\mathbb{R}_+ \to \mathbb{R}_+$, and by the heat capacity. For simplicity and for the clarity of the presentation we consider the heat capacity being equal to one. Further, we need to specify the constitutively determined part of the Cauchy stress $\S$, that is supposed to be a tensorial function of the temperature $\thet$ and the symmetric velocity gradient $\D\vv:=(\nabla \vv +(\nabla \vv)^T)/2$, i.e., there exists $\S^*:\mathbb{R}_+ \times \mathbb{R}^{3\times 3}_{\textrm{sym}} \to \mathbb{R}^{3\times 3}_{\textrm{sym}}$ such that 
\begin{equation}\label{def:S}
\S = \S^*(\thet, \D \vv) \qquad \textrm{ in }Q.
\end{equation}
We assume that the mapping $\S^*:(0,+\infty)\times\R^{3\times 3}_{sym}\to \R^{3\times 3}_{sym}$ is continuous and  fulfills the following growth hypothesis
with some positive constants $\underline{\kappa}, \overline{\kappa}, \delta >0$ 
and $p\in [8/5, 11/5)$
\begin{equation}
\label{MBBounds}
\begin{gathered}
\underline{\kappa}\le \kappa(\thet)\le \overline{\kappa}, \qquad |\S^*(\thet,\D)|\le \overline{\kappa}(\delta + |\D|)^{p-1}, \qquad |\S^*(\thet, \vc{0})|=0,\\
\underline{\kappa}(\delta + |\D|)^{p-2}|\D|^2 \le \S^*(\thet,\D):\D.
\end{gathered}
\end{equation}
For the proof of existence of a solution we also need that $\S^*$ is monotone with respect to the second variable, i.e., that  for all $\thet>0$, $\D_1,\D_2\in\R^{3\times 3}_{sym}$, $\D_1\neq\D_2$ there holds
\begin{equation}\label{s:monotone}
(\S^*(\thet,\D_1)-\S^*(\thet,\D_2)):(\D_1-\D_2)\ge 0.
\end{equation}
Note that the prototypic relation $\S^*(\thet,\D)=\nu(\thet)(\delta + |\D|)^{p-2}\D$ with smooth bounded positive function $\nu$ bounded away from zero falls into the class \eqref{MBBounds} and \eqref{s:monotone}.

To simplify the notation, we consider for the \emph{stationary} boundary data $\thetah$ that it is extended into $\Omega$ such that the extension is the expected asymptotic solution, i.e., we assume that for some constants $\tmin,\tmax>0$
\begin{equation}\label{MB6}
\thetah\in W^{1,2}(\Omega),\quad 0<\tmin\le \thetah \le \tmax, \quad \mbox{ and for all } \varphi\in W^{1,2}_0(\Omega)  \quad \intO{\kappa(\thetah)\nabla \thetah\cdot \nabla \varphi}=0.
\end{equation}
The initial conditions are considered to have finite energy, i.e.,
\begin{equation}\label{MB7}
\intO{\frac{|\vv_0|^2}{2} + \thet_0} <\infty
\end{equation}
and to satisfy the standard compatibility conditions
\begin{equation}\label{MB7.5}
\diver \vv_0 =0, \; \thet_0 >\tmin \textrm{ in } \Omega \qquad \textrm{ and } \qquad \vv_0  \cdot \vc{n} = 0 \textrm{ on } \partial\Omega,
\end{equation}
where $\vc{n}$ denotes the unit outward normal vector on $\partial \Omega$.

It is seen that $(\vc{0}, \thetah)$ 
is the only steady state of the problem \eqref{MB1}--\eqref{MB5}. We want to study its nonlinear stability, i.e., we are interested under which conditions and for which class of solutions this steady state \emph{attracts} in some sense all such solutions. Furthermore, we want to be sure that the class of relevant solutions is not empty.

As it is common in the theory of complicated systems arising in the theory for the heat conducting fluids, see \cite{A21, ABK22, AbBuKa2024, FeNo2021, FeiMa2006, FeNo2017}, distributional solutions need not be sufficiently regular to guarantee expected properties such as the stability of steady states. For this reason, additional conditions must be incorporated into the definition of a solution. These are typically formulated as inequalities that capture the behavior of key physical quantities such as the total energy, the internal energy, the kinetic energy and the entropy. Note that in the case of the Navier--Stokes system (the system without the temperature effects), such a condition is the classical (kinetic) \emph{energy inequality}. In the present work, we seek suitable variants of such constraints to develop a complete stability theory for the steady-state regime. Specifically, we replace the heat equation with the entropy inequality, supplemented by a modified version of the total energy inequality. This latter inequality is nonstandard, as it includes a correction term that accounts for the heat flux through the boundary. This modification is necessary because we impose a nonhomogeneous Dirichlet boundary condition on the temperature, implying that the system is not isolated and may exchange energy with its surroundings.
The precise definition of a weak solution is presented later. We show the global-in-time and large-data existence of such solutions and prove that they all converge to the unique steady state as time $t\to +\infty$.

Very similar problems were studied in \cite{BuMaPr19, DosPruRaj, ABK22, AbBuKa2024}. However, the situations considered there were significantly simpler: either the existence of a hypothetical smooth solution was assumed, or the results were restricted to the so-called subcritical regimes. In such settings, one may directly apply energy equalities and classical renormalization techniques, and the internal energy equality is equivalent to the entropy equality. Furthermore, for given temperature, we have the uniqueness of the velocity field, see \cite{BuKaPr19}. Nonlinear stability of the steady state can then be deduced using these tools. In contrast, the methods and techniques developed in the aforementioned works are not applicable in the supercritical setting, which is, in fact, the most natural regime in the context of incompressible fluids. For instance, the classical three-dimensional Navier–-Stokes-–Fourier system falls outside the scope of the results presented in these papers.

As the main contribution of the present work, we introduce a new notion of solution (see Definition~\ref{def:blafnc}), for which we prove global-in-time existence for arbitrary initial data with finite energy (Theorem~\ref{thm:ex}). Most importantly, we establish the nonlinear stability of the steady state for this system (Theorem~\ref{TMB}). It is remarkable that the new concept of a solution introduced in Definition~\ref{def:blafnc} can, in a certain sense, also be identified with the classical notion of suitable weak solutions introduced in \cite{CaKoNi82}.

Furthermore, we observe that some additional property must be required of the solution in order to guarantee stability, since it is known that weak solutions, or even Leray--Hopf solutions, need not be unique for shear-thinning fluids, see \cite{BuMoSz21}. In contrast, the authors of \cite{AbFe2020} introduced another general concept of solution for shear-thinning fluids, for which they also established an existence result. Moreover, they obtained a conditional uniqueness result, which may be understood as a first step toward stability. However, all of this holds under the hypothesis that a sufficiently regular solution exists. Such a result in the three-dimensional setting is not generally known; at present we only have small-data regularity results for shear-thinning fluids, see \cite{AM19, BeDieRu2010}.

A related stability result was also recently established in \cite{FeLuSu2024}  (see \cite{FeNo2022-kniha} for the existence results), where the authors study an even more complex problem involving a compressible fluid. Their model allows not only the transfer of kinetic energy into heat, but also the reverse process. In their case, the steady state is not necessarily unique in general. Therefore, they assume that the boundary data are close to a constant steady solution, which, together with the smallness of the potential driving force, guarantees the uniqueness of the steady state. Another key difference compared to our work is that the authors consider a more restrictive class of weak solutions. Specifically, they require that the solution satisfies a relative energy inequality directly, which is not the case of this paper. 
We also note that a related problem for the compressible magnetohydrodynamics (MHD) system was recently initiated in~\cite{FeGwiaKwonSwie2023}.


Our system may also be compared to the Oberbeck--Boussinesq approximation; however, it differs from it in essential aspects. The Oberbeck--Boussinesq system does not account for the dissipation of kinetic energy in the heat equation, which is the main difficulty in our analysis. On the other hand, it includes the buoyancy force in the balance of linear momentum, which is the term responsible for transferring heat into kinetic energy. At present, we are not able to handle this term.

A variant of the Oberbeck--Boussinesq system with nonhomogeneous Dirichlet boundary conditions was derived in~\cite{BeFeOsch2023} by a singular limit passage from the full compressible system. Surprisingly, a new boundary condition appeared, involving a nonlocal boundary term. The existence of solutions to this new system was established in~\cite{AbFe2023}, and nonlinear stability for small data was proved in~\cite{FeRoSchi2024}. We emphasize that the analysis in these works differs essentially from ours, since the inclusion of dissipative heating in the heat equation introduces many additional difficulties. In particular, the heat equation must be treated as an equation with an $L^1$ right-hand side.
In this context, existence results for the Oberbeck--Boussinesq approximation with additional dissipative heating are already available: see \cite{KaRuTha2000} for the derivation of the model and \cite{AbBuLe24} for the corresponding existence result. However, for such a complicated system no further analysis is available to date, and we plan to address this in forthcoming papers.



\bigskip
  
The rest of the introductory part is given to the introduction of the notation we use and also for the main result we get.

\subsection{Functional setting}

We assume that $\Omega \subset \mathbb{R}^3$ is a bounded domain with $\mathcal{C}^{0,1}$ boundary.  
For a fixed $T > 0$, we use the notation $\QT = (0,T)\times \Omega$, while for $T = \infty$ we write $Q = (0,+\infty)\times \Omega$.  

The scalar product of vectors in $\mathbb{R}^3$ is denoted by $\cdot$, and for the scalar product of matrices we use the symbol $:$.   The set of symmetric $3 \times 3$ matrices is denoted by $\mathbb{R}^{3 \times 3}_{\mathrm{sym}}$.  
Differential operators $\diver$, $\Delta$, $\nabla$ always refer to the spatial variable $x \in \Omega$, while $\partial_t$ denotes the partial derivative with respect to $t \in (0,T)$ or $(0,+\infty)$.  We denote by $|\Omega|$ the Lebesgue measure of $\Omega$, and by $\chi_\Omega$ its indicator function. 

We use standard notation for Lebesgue, Sobolev, and Bochner spaces and their norms, e.g. 
$L^p(\Omega)$, $W^{1,p}(\Omega)$, $L^p(0,T; X)$, $W^{1,p}(0,T; X)$ for $p\in [1,\infty]$, where $X$ is a Banach space. 
If we want to emphasize that functions are vector-valued, we write, for instance, 
$L^p(\Omega;\mathbb{R}^3)$.  Smooth functions with compact support are denoted by $\mathcal{C}^\infty_0(\Omega)$ or 
$\mathcal{C}^\infty_0(0,+\infty)$.  
The restrictions of smooth functions to a given closed set are denoted again, e.g. by 
$\mathcal{C}^\infty(\overline{\Omega})$. Furthermore, the symbol $\mathcal{C}^k$ for $k \in \mathbb{N}$ denotes the space of 
$k$-times continuously differentiable functions, while for continuous functions we simply 
write $\mathcal{C}$.

The spaces $L^2_{0,\diver}$ and $W^{1,p}_{0,\diver}$ for $p\in [1,\infty)$ are defined as the closures of 
\[
\{\varphi \in \mathcal{C}^\infty_0(\Omega) \; ; \; \diver \varphi = 0\}
\]
in $L^2(\Omega)$ and $W^{1,p}(\Omega)$, respectively. Similarly, the space  $W^{1,p}_0(\Omega)$ denotes the classical zero trace Sobolev space.  
On these spaces we use the inherited norms.  
The dual space of a Banach space $X$ is denoted by $X^*$, and the duality pairing by $\langle \cdot, \cdot \rangle_X$.  
If it is clear from the context, the subscript $X$ will be omitted.  
We also introduce the space $\mathcal{M}(0,T; X^*)$ of $X^*$-valued countably additive regular measures with bounded variation, which is isometrically isomophic to $\mathcal{C}([0,T],X)^*$, see~\cite{DieSwart2002}.

Moreover, we will work with functions that are weakly continuous in time.  
We denote by $\mathcal{C}_{\mathrm{weak}}(0,T;X)$ the space
\[
\mathcal{C}_{\mathrm{weak}}(0,T;X) = 
\Bigl\{ u \in L^\infty(0,T;X);
\forall \varphi \in X^*: \langle \varphi, u(\cdot) \rangle_X \in \mathcal{C}[0,T] \Bigr\},
\]
where $X$ is a Banach space.

\newcommand{\cad}{c\`{a}dl\`{a}g } 

Finally, since we are unable to address the continuity of the temperature with respect to the time variable, we employ the notion of \cad functions and refer the reader to \cite{AbLiSc24,Am90} for more details. In our setting, we only need the following properties. We define the space of $X$-valued \cad functions as
$$
\mathcal{D}(0,T; X):=\{u\in L^{\infty}(0,T; X); (\forall t\in [0,T): u(t) = \lim_{s\to t_+} u(s))\land (\forall t\in (0,T]:\exists \lim_{s\to t_{-}}u(s)\in X)\},
$$
which means that these functions are right-continuous and have left limits with respect to the norm topology in $X$. Further, we define weak \cad functions as
$$
\mathcal{D}_{\textrm{weak}}(0,T; X):=\{u\in L^{\infty}(0,T; X); \forall \varphi \in X^*: \langle \varphi,u(\cdot) \rangle_X \in \mathcal{D}(0,T; \R) \}.
$$
Finally, we state the following properties. First, if $u\in L^1(0,T; X)$ and $\partial_t u\in \mathcal{M}(0,T; X)$, then $u\in BV(0,T; X)$ 
and there exists a representative, denoted again by $u$, such that $u \in \mathcal{D}(0,T; X)$; see \cite[Lemma~2.5]{AbLiSc24}. Further, assume that the Banach space $X$ is reflexive and is densely and continuously embedded into $Y$. Then
$$
L^{\infty}(0,T; X) \cap \mathcal{D}_{\textrm{weak}}(0,T; Y) \subset \mathcal{D}_{\textrm{weak}}(0,T; X).
$$
This can be proved almost line by line as in \cite[Lemma~III.1.4]{temam79}, which asserts
$$
L^{\infty}(0,T; X) \cap \mathcal{C}_{\mathrm{weak}}(0,T; Y) \subset \mathcal{C}_{\mathrm{weak}}(0,T; X).
$$
Consequently, any function $u \in L^1(0,T; X)$ with $\partial_t u \in \mathcal{M}(0,T; Y)$, where $X$ is reflexive and densely and continuously embedded into $Y$, has a representative $u \in \mathcal{D}_{\textrm{weak}}(0,T; X)$.

Throughout the paper we use a generic constant $C>0$, whose value may change from line to line but which is always independent of the relevant quantities.  

\subsection{Definition of the notion of solution}

Let us now turn to the definition of our concept of solution. Since we aim to address the so-called supercritical case, we cannot expect a solution that satisfies \eqref{MB3}. We are inspired by similar approaches for compressible fluids \cite{Fe04} and also in the incompressible setting, see \cite{BuFeMa2009,BuMaRa09}. Instead of the internal energy identity \eqref{MB3}, we consider the entropy ``equality'', which is derived by dividing \eqref{MB3} by $\thet$. Defining the entropy $\eta$ as
$$
\eta := \ln \thet,
$$
this procedure leads to
\begin{equation}
\dert \eta + \diver (\eta \vv) - \diver (\kappa(\thet)\nabla \eta) = \frac{\S : \D \vv}{\thet} + \kappa(\thet)|\nabla \eta|^2.
\label{MB34}
\end{equation}
Similarly, we define the entropy associated with the stationary temperature $\hat{\thet}$ as $\hat{\eta} = \ln \hat{\thet}$. It is important to note that $\eta$ has much better integrability properties than $\thet$, which is the main reason for working with it. Nevertheless, we are still unable to handle the equality in \eqref{MB34} and therefore we replace it by an inequality ``$\ge$", which is natural from the perspective of the second law of thermodynamics. 

However, in doing so we lose some information, and the system \eqref{MB1}--\eqref{MB2} together with the inequality \eqref{MB34} does not constitute a complete problem. Therefore, an additional restriction is required in order to formulate a well-posed problem. One natural possibility is to introduce the kinetic and total energy (in)equalities.

The kinetic energy identity is obtained by taking the scalar product of \eqref{MB1} with $\vv$, which yields
\begin{equation}\label{MBkinetic}
  \partial_t \left(\frac{|\vv|^2}{2}\right) 
  + \diver \left(\vv \left(p+\frac{|\vv|^2}{2}\right)\right) 
  - \diver (\S \vv) 
  + \S : \D\vv = 0.
\end{equation}
Furthermore, by adding \eqref{MBkinetic} to \eqref{MB3} we obtain the total energy balance
\begin{equation}\label{MBtotal}
  \partial_t \left(\thet + \frac{|\vv|^2}{2}\right) 
  + \diver \left(\vv \left(\thet + p+\frac{|\vv|^2}{2}\right)\right) 
  - \diver (\S \vv) 
  - \diver (\kappa(\thet)\nabla \thet) = 0.
\end{equation}

The classical approach is to integrate the above identity over $\Omega$ and to incorporate it, together with \eqref{MB1}--\eqref{MB2} and the inequality \eqref{MB34}, as part of the solution concept. This indeed leads to a well-defined problem in the case of Neumann boundary conditions for $\thet$, see \cite{Fe04,BuFeMa2009,BuMaRa09}. 
However, for Dirichlet boundary conditions such a procedure is not possible. After integration over $\Omega$ and applying integration by parts, one obtains the additional boundary term
\[
  \int_{\partial \Omega} \kappa(\thet)\nabla \thet \cdot \vn \, \mathrm{d}S,
\]
which cannot be controlled. Therefore, \eqref{MBtotal} must be properly modified, and a different concept needs to be introduced. To this end, we first introduce the notion of the \blafnc.
\begin{definition}[$\gamma$-\blafnc]\label{def:blafnc}
Let $\gamma \in (-1,0)$. We say that 
$\bb \in \mathcal{C}^1((0,+\infty) \times [\tmin,\tmax])$ 
is a $\gamma$-\blafnc\ if there exists $C>0$ such that $\bb$ and its primitive function with respect to the first variable
\begin{equation}\label{Bdef}
\B(s, \tilde{s}) = \int_{\tmin}^s \bb(\sigma, \tilde{s}) \, {\rm d} \sigma, 
\quad \text{for } s \in (0,+\infty) \text{ and } \tilde{s} \in [\tmin,\tmax],
\end{equation}
satisfy, for $s \geq \tmin$ and $\tilde{s} \in [\tmin,\tmax]$,
\begin{align}
\label{item0}  
0 \leq b(s,\tilde s) &\leq C s^\gamma, & b(\tilde{s},\tilde{s}) &= 1, \\
\label{item2} 
|\partial_2 b(s, \tilde{s})| &\leq C s^\gamma, & |\partial_2 B(s, \tilde{s})| &\leq C s^{1+\gamma},\\
\label{item3}
- C s^{\gamma-1} \leq \partial_1 b(s, \tilde{s}) &\leq 0.
\end{align}
\end{definition}
Note that it follows directly from the above definition that there exists a constant $C>0$ such that, for all $s \geq \tmin$ and all $\tilde{s} \in [\tmin,\tmax]$, 
\begin{equation}\label{rem:B-growth}
  |B(s,\tilde s)| \leq C s^{1+\gamma}.
\end{equation}
The typical example of the $\gamma$-\blafnc\ is the following function
\begin{equation}\label{bEX}
  \bb(\thet,\thetah)=\left(\frac \thetah\thet\right)^\alpha,\quad B(\thet,\thetah)=\frac{\thetah^\alpha}{1-\alpha}\left(\thet^{1-\alpha}-\tmin^{1-\alpha}\right),
\end{equation}
for some $\alpha\in(0,1)$. In this setting we have that $\bb$ satisfies Definition~\ref{def:blafnc} with $\gamma=-\alpha$.

The main motivation for introducing the $\gamma$-\blafnc\ is the following. If we multiply~\eqref{MB3} by $b(\thet,\hat{\thet})$ and use the fact that 
$\hat{\thet}$ is time-independent, then
\begin{equation}\label{ABC}
\begin{split}
&\S : \D \vv \, b(\thet,\hat{\thet}) = \dert  \thet\,  b(\thet,\hat{\thet}) + \vv\cdot \nabla \thet \, b(\thet,\hat{\thet}) - b(\thet,\hat{\thet})\, \diver (\kappa(\thet)\nabla \thet) \\
&\quad = \dert  B(\thet,\hat{\thet}) + \vv\cdot \nabla \thet \, b(\thet,\hat{\thet}) - \diver \left(\kappa(\thet)\nabla \thet\, b(\thet,\hat{\thet})\right)+\, \kappa(\thet)\nabla \thet \cdot \nabla \big(b(\thet,\hat{\thet})).
\end{split}
\end{equation}
Finally, let $\varphi \in \mathcal{C}^{1}(\overline{\Omega})$ be a nonnegative function such that $\varphi \equiv 1$ on $\partial \Omega$. Multiplying \eqref{ABC} by $\varphi$, adding the result to \eqref{MBtotal}, and performing a straightforward algebraic manipulation, we obtain
\begin{equation}\label{MBtotal2}
\begin{split}
  &\partial_t \left(\thet -\varphi  B(\thet,\hat{\thet}) + \frac{|\vv|^2}{2}\right) + \left(\S : \D \vv \, b(\thet,\hat{\thet}) -\, \kappa(\thet)|\nabla \thet|^2 \partial_1 b(\thet,\hat{\thet}) \right)\varphi=\\
  &\quad
  - \diver \left(\vv \left(\thet + p+\frac{|\vv|^2}{2}\right)\right) 
  + \diver (\S \vv) 
  + \diver \left(\kappa(\thet)\nabla \thet (1-\varphi b(\thet,\hat{\thet}))\right) \\
     &\quad + \kappa(\thet)\nabla \thet \cdot \nabla \varphi \, b(\thet,\hat{\thet})+\, \kappa(\thet)\nabla \thet \cdot \nabla \hat{\thet} \partial_2 b(\thet,\hat{\thet}) \varphi +\vv\cdot \nabla \thet \, b(\thet,\hat{\thet})\varphi
\end{split}
\end{equation}
The above identity is a natural candidate for the final restriction on the solution. First, using \eqref{MBBounds}, \eqref{item0}, \eqref{item3}, and the fact that $\varphi \ge 0$, one observes that the last term on the left-hand side is nonnegative. Second, since $\vv = \vc{0}$, $\varphi = 1$, and $b(\thet,\hat{\thet})=1$ on $\partial \Omega$, we may integrate~\eqref{MBtotal2} over~$\Omega$. In this case, all divergence terms vanish by the integration by parts formula, since their traces are zero. This is the crucial difference compared to \eqref{MBtotal}. From these manipulations we deduce the following \emph{ $b$ - corrected total energy inequality} (after discarding terms with the correct sign)
\begin{equation}\label{MBtotal3}
\begin{split}
  &\partial_t \left(\intO{\thet -\varphi  B(\thet,\hat{\thet}) + \frac{|\vv|^2}{2}}\right) \le \\
     &\quad +\intO{\kappa(\thet)\nabla \thet \cdot \nabla \varphi \, b(\thet,\hat{\thet})+\, \kappa(\thet)\nabla \thet \cdot \nabla \hat{\thet} \partial_2 b(\thet,\hat{\thet}) \varphi +\vv\cdot \nabla \thet \, b(\thet,\hat{\thet})\varphi}
\end{split}
\end{equation}
and this is the key point for the definition of a proper notion of solution. Thus, we complement \eqref{MB1}--\eqref{MB2} and the inequality \eqref{MB34} by additionally requiring \eqref{MBtotal3}.  
We are now in a position to introduce the rigorous notion of solution.
\begin{definition}[\bws]\label{proper-sol}
Let $\gamma \in (-1,0)$ and let $\bb$ be a $\gamma$-\blafnc. Assume that $\Omega \subset \mathbb{R}^3$ 
is a bounded Lipschitz domain and suppose that $\S^*$ satisfies the conditions~\eqref{MBBounds}--\eqref{s:monotone} with $p>\frac65$,  
and that $\hat{\thet}$ fulfills \eqref{MB6} for some $0 < \tmin \leq \tmax$. Furthermore, let the initial data 
$\vv_0$ and $\thet_0$ be given such that 
\[
  \vv_0 \in L^2_{0,\diver}(\Omega;\mathbb{R}^3), \qquad 
  \thet_0 \in L^1(\Omega), \qquad 
  \thet_0 \geq \tmin \ \text{a.e. in } \Omega,
\]
that is, assumptions \eqref{MB7}--\eqref{MB7.5} are satisfied.  

We say that a quadruple $(\vv,\S,\thet,\eta)$ is a \bws\ provided that for every $T > 0$ the following conditions hold:
\begin{itemize}[leftmargin=*]
\item we have the following point-wise relations
$$
\S=\S^*(\thet,\D\vv), \quad \eta=\ln \thet, \quad \thet\ge \tmin \qquad \textrm{a.e. in } Q_T;
$$
\item we have the following functional setting
\begin{align}\label{MBap}
\begin{aligned}
&\vv \in L^\infty(0,T; L^2_{0,\diver}) \cap L^p(0,T; W^{1,p}_{0,\diver}),\quad \partial_t\vv \cap L^{5p/6}\left(0,T; \left(W^{1,(5p/6)'}_{0,\diver}\right)^*\right),\\
&\S \in L^{p'}((0,T)\times \Omega),\qquad \thet \in L^\infty(0,T; L^1(\Omega)), \qquad \partial_t \thet \in \mathcal{M}\left(0,T; (W^{2,3}_0(\Omega))^*\right),\\
&\eta-\hat{\eta} \in L^2(0,T; W^{1,2}_0(\Omega)), \qquad \partial_t \eta \in \mathcal{M}\left(0,T; (W^{2,3}_0(\Omega))^*\right);
\end{aligned}
\end{align}
\item the initial data are attained in the following sense
\begin{equation}\label{eq:att-ic}
\lim_{t\to 0_+} \|\vv(t)-\vv_0\|_2^2+ \|\thet(t)-\thet_0\|_1  = 0;
\end{equation}
\item the balance of linear momentum~\eqref{MB1} is satisfied in the following sense: for all $\vw\in W^{1,(5p/6)'}_{0,\diver}$ and for almost all~$t\in (0,T)$ 
\begin{equation}\label{weak1}
\langle \partial_t\vv, \vw \rangle + \intO{(\S- \vv \otimes \vv): \nabla \vw}=0; 
\end{equation}
\item  the entropy inequality \eqref{MB34} is satisfied in the following sense: for every nonnegative $\varphi \in \mathcal{C}^1_0(\Omega)$ and nonnegative $\psi \in \mathcal{C}_0^1 (0,T)$
\begin{equation}\label{entropy-limit}
\begin{split}
&-\intTO{\eta \varphi \dert\psi}  - \intTO{\eta\,\vv\cdot \nabla\varphi \psi}  + \intTO{\kappa(\thet)\nabla\eta\cdot\nabla\varphi \psi} \\
&\qquad \geq \intTO{\frac{\S:\D{\vv}}{\thet}\,\varphi \psi} + \intTO{\kappa(\thet)\,|\nabla \eta|^2\, \varphi \psi}; 
\end{split}
\end{equation}

\item the kinetic energy inequality \eqref{MBkinetic} holds in the following sense:  for all nonnegative $\psi\in \mathcal{C}_0^1(0,T)$ 
  \begin{equation}\label{eq:kinen1}
\intTO{-\frac{|\vv|^2}{2}\, \dert\psi + \S:\D\vv \, \psi }\leq 0;
  \end{equation}
\item the $b$ - corrected total energy inequality \eqref{MBtotal3}  holds in the following sense: for any nonnegative $\varphi\in \mathcal{C}^1(\overline{\Omega})$ fulfilling  $\varphi=1$ on $\partial \Omega$ and for any nonnegative  $\psi\in \mathcal{C}^\infty_0(0, T)$    
\begin{equation}\label{TE}
\begin{aligned}
  &- \intTO{\left[\frac{|\vv|^2}{2} + \thet - \B(\thet, \thetah)\,\varphi\right] \,\partial_t \psi}
  -\intTO{\vv\cdot\nabla \thet\, \bb(\thet, \thetah)\varphi\,\psi}\\
 &\quad - \intTO{\kappa(\thet)\nabla\thet\cdot \nabla \varphi\, \bb(\thet, \thetah)\,\psi}
  -\intTO{\kappa(\thet)\nabla\thet\cdot\nabla\thetah \,\partial_{2} \bb(\thet, \thetah)\,\varphi \,\psi}
  \leq 0.
\end{aligned}
\end{equation}
\end{itemize}
\end{definition}

\bigskip

Before stating the main results of the paper, we briefly discuss the definition introduced above. First, it follows from the definition that $\eta\in L^\infty(0,T; L^q(\Omega))$. Next, it is important to emphasize that we always choose the representatives of $\vv$ and $\thet$ in such a way that $\vv \in \mathcal{C}_{\textrm{weak}}(0,T; L^2(\Omega))$ and $\eta\in \mathcal{D}_{\textrm{weak}}(0,T; L^2(\Omega))$. Thus, $\vv$, $\eta$, and consequently also $\thet$ are defined for all $t\in (0,T)$.

In Definition~\ref{proper-sol}, we implicitly assume that all integrals and expressions are well defined. Note that the regularity assumption \eqref{MBap} is not sufficient to guarantee that all integrals in the definition are well defined for every $\gamma \in (-1,0)$. In particular, to ensure that the third and fourth integrals in \eqref{TE} are well defined for $\thetah \in W^{1,2}(\Omega)$, it is necessary that
\begin{equation}\label{ADD2}
|\nabla \thet| \, \thet^{\gamma} \in L^1(0,T; L^2(\Omega)),
\end{equation}
by \eqref{item0}--\eqref{item2}, which can be derived from \eqref{MBap} only under certain restrictions on the value of $\gamma$. Henceforth, $\gamma \in (-1,0)$ will always be chosen so that every integral in the above definition is well defined and finite.


\subsection{Main results}
The first result concerns the existence of global-in-time and large-data \bws\ for arbitrary $\bb$ with a suitable $\gamma$.
\begin{theorem}\label{thm:ex}
Let $\gamma\in(-1,-1/2)$ and let $\bb$ be a fixed $\gamma$-\blafnc. Assume that the data $\thetah$ satisfy \eqref{MB6}, and the initial data $\vv_0$ and $\thet_0$ satisfy \eqref{MB7} and \eqref{MB7.5}, respectively. Assume further that $\kappa$ and $\S^*$ satisfy \eqref{MBBounds}--\eqref{s:monotone} with $p\in(6/5,11/5)$.
Then there exists a \bws\ to the problem \eqref{MB1}-\eqref{MB5}. Moreover, the solution satisfies the global estimate
\begin{equation}\label{eq:lnl1}
\nm{\thet}_{L^\infty(0,+\infty,L^1(\Omega))}\leq \intO{\frac12|\vv|^2+\max(\tmax,\thet_0)}.
\end{equation}
\end{theorem}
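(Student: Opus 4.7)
My plan is to follow the general philosophy developed for non-Newtonian Navier--Stokes--Fourier systems in \cite{BuFeMa2009,BuMaRa09,ABK22,AbBuKa2024}, with the $b$-corrected total energy inequality~\eqref{TE} as the substitute for the standard total energy balance that fails to close under Dirichlet boundary conditions for $\thet$. I would first set up a multi-level approximation scheme: a Galerkin projection of the momentum equation~\eqref{MB1} onto finite-dimensional subspaces of $W^{1,\infty}_{0,\diver}(\Omega)$, a stabilizing perturbation of the stress by $\varepsilon(1+|\D\vv|^{q-2})\D\vv$ with $q$ large, and a regularization of \eqref{MB3} compatible with the minimum principle $\thet\ge\tmin$ (e.g.\ truncating the production term $\S:\D\vv$ at level $n$). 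At this level the solutions are smooth enough that \eqref{MBkinetic}, \eqref{MB34} and \eqref{MBtotal2} all hold as identities and can be produced by testing with $\vv$, with $1/\thet$, and with $b(\thet,\hat\thet)\varphi$, respectively.

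Next I would extract a priori estimates uniform in the approximation parameters. The kinetic energy identity yields $\vv\in L^\infty(0,T;L^2)\cap L^p(0,T;W^{1,p}_{0,\diver})$. Integrating \eqref{MB3} in space and time delivers the $L^1$-bound on $\thet$, including the explicit estimate~\eqref{eq:lnl1}. Multiplying \eqref{MB3} by $b(\thet,\hat\thet)$ and exploiting \eqref{item3} together with \eqref{MBBounds} provides uniform control of $\S:\D\vv\,b(\thet,\hat\thet)$ in $L^1(Q_T)$ and of $\thet^{\gamma/2}|\nabla\thet|$ in $L^2(Q_T)$; this covers \eqref{ADD2}. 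An entropy-type estimate---dividing \eqref{MB3} by $\thet$ at the approximate level and testing with a truncation of $\eta-\hat\eta$, which vanishes on $\partial\Omega$---yields $\nabla\eta\in L^2(Q_T)$ and $\S:\D\vv/\thet\in L^1(Q_T)$. The time-derivative bounds in \eqref{MBap} then follow by reading the equations as definitions of $\partial_t\vv$, $\partial_t\thet$, and $\partial_t\eta$, and the minimum principle $\thet\ge\tmin$ is preserved throughout the approximation.

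The main work lies in the passage to the limit. Aubin--Lions combined with the bound on $\partial_t\vv_n$ yields strong $L^p(0,T;L^r)$-compactness of $\vv_n$ for some $r>p$, which handles the convective term $\vv_n\otimes\vv_n$. However, because $p<11/5$ the limit velocity $\vv$ is not an admissible test function in the momentum equation, so the weak limit of $\S^*(\thet_n,\D\vv_n)$ cannot be identified by a direct energy argument; this is the central difficulty. I would therefore invoke the \emph{Lipschitz truncation} technique as in \cite{BuFeMa2009}: truncate $\vv_n-\vv$ to a Lipschitz function $\vw_{n,\lambda}$, use it as a test function in the approximate momentum equation, control the error on the ``bad set'', and conclude via Minty's trick using the monotonicity hypothesis~\eqref{s:monotone}. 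To make the temperature dependence of $\S^*$ compatible with this argument, I need pointwise a.e.\ convergence $\thet_n\to\thet$, which I would obtain from the $L^2$-bound on $\nabla\thet_n^{(1+\gamma)/2}$ together with the measure-valued bound on $\partial_t\thet_n$ in $(W^{2,3}_0)^*$ via a c\`{a}dl\`{a}g-type Aubin--Lions compactness (consistent with the embeddings recalled in the functional-setting subsection).

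Once the stress limit has been identified, the entropy inequality \eqref{entropy-limit} and the kinetic energy inequality \eqref{eq:kinen1} follow by weak lower semicontinuity of the convex dissipation terms $\S^*(\thet,\D\vv):\D\vv/\thet$ and $\S^*(\thet,\D\vv):\D\vv$ (via an Ioffe-type argument), combined with the strong $L^2$-convergence of $\vv_n$ and the a.e.\ convergence of $\thet_n$. The $b$-corrected inequality~\eqref{TE} passes to the limit in the same spirit---no sign issue arises because the two nonnegative dissipation terms present in~\eqref{MBtotal2} have already been discarded in forming~\eqref{MBtotal3}. Attainment of the initial data~\eqref{eq:att-ic} follows from weak and c\`{a}dl\`{a}g time-continuity combined with the kinetic and total energy inequalities. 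I expect the principal obstacles to be the interplay between the Lipschitz truncation and the temperature dependence of $\S^*$ (which forces us to obtain a.e.\ convergence of $\thet_n$ before identifying the stress limit), and the calibration of $\gamma\in(-1,-1/2)$ and $p\in(6/5,11/5)$ so that every term in \eqref{TE}---in particular $\kappa(\thet)\nabla\thet\cdot\nabla\hat\thet\,\partial_2 b(\thet,\hat\thet)$ and $\vv\cdot\nabla\thet\,b(\thet,\hat\thet)$---remains uniformly well-defined throughout the approximation.
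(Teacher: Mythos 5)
Your overall plan—Galerkin/approximation, kinetic and entropy estimates, Lipschitz truncation to identify the stress for $p<11/5$, a.e.~convergence of $\thet_n$ before using \eqref{s:monotone}, then weak lower semicontinuity for the inequalities—agrees with the structure of the paper's proof, and your discussion of the interplay between the temperature dependence of $\S^*$ and the truncation argument is on point. However, there are two genuine gaps.

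First, your claim that ``integrating \eqref{MB3} in space and time delivers the $L^1$-bound on $\thet$, including the explicit estimate~\eqref{eq:lnl1}'' does not work. Integration over $\Omega$ produces the boundary flux $\int_{\partial\Omega}\kappa(\thet)\nabla\thet\cdot\vn\,\mathrm{d}S$, which under Dirichlet data has no sign and cannot be controlled; this is precisely the obstruction that motivates the entire $b$-correction apparatus (the paper says so explicitly in the introduction). The actual argument is subtler: one tests the approximate heat equation with a nondecreasing truncation $F_m(\thet^n)$ that vanishes for $\thet^n\le\tmax$, hence has zero trace because $\thet^n=\thetah\le\tmax$ on $\partial\Omega$. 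The elliptic term then contributes a nonnegative quantity that can be dropped, the production term is bounded via $F_m\le 1$ together with the \emph{global} bound $\int_0^\infty\intO{\S^n:\D\vv^n}\le\tfrac12\|\vv_0^n\|_2^2$ from the kinetic energy identity, and the time derivative yields $\partial_t\intO{P_m(\thet^n)}$. Letting $m\to\infty$ gives a uniform bound on $\intO{(\thet^n(\tau)-\tmax)_+}$ and hence the $T$-independent estimate \eqref{eq:lnl1}. A naive space integration cannot reproduce this, and the $T$-independence is essential for the subsequent stability result.

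Second, you underestimate the attainment of the initial temperature in $L^1$. Weak c\`{a}dl\`{a}g continuity gives only a weak limit, and converting this into $\|\thet(\tau)-\thet_0\|_1\to 0$ is one of the longest parts of the proof. The paper first establishes $\sqrt{\thet}\in\mathcal{D}_{\mathrm{weak}}(0,T;L^2)$ and a pointwise $\liminf$ bound $\sqrt{\thet(0)}\ge\sqrt{\thet_0}$ from a Steklov-averaged entropy-type inequality; it then derives a local $\limsup$ bound by testing with $\varphi^2$, $0\le\varphi\le 1$, and cancelling the dissipative heating against the kinetic energy identity; this yields strong $L^2_{\mathrm{loc}}$ convergence of $\sqrt{\thet(\tau)}$. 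Upgrading to convergence on all of $\Omega$ requires the auxiliary function $\ff_\alpha$ from \eqref{DFG} and the pointwise convexity inequality \eqref{zaba}, whose verification uses the specific choice $\alpha=\overline\kappa/(2\underline\kappa)>1/2$. None of this is captured by ``weak and c\`{a}dl\`{a}g time-continuity combined with the kinetic and total energy inequalities.''

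Your proposed approximation scheme (an added $\varepsilon(1+|\D\vv|^{q-2})\D\vv$ viscosity and a truncated production term) is a legitimate alternative to the paper's direct Faedo--Galerkin for $\vv$ coupled to the full heat PDE; this is largely a matter of taste and should not cause problems, provided the above two points are repaired.
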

The key novelties of this result are several. First, we are able to prove the existence for parameters $p>6/5$, whereas previous results dealt with larger exponents $p>9/5$ and only the Neumann boundary data for temperature; see \cite{BuMaRa09}. This is because in this paper, we use a different concept of the solution, namely a solution fulfilling the $b$-corrected total energy inequality \eqref{MBtotal3}, which allows us to go beyond the threshold of $9/5$ in \cite{BuMaRa09}. Next, the upper bound $p<11/5$ can in fact be omitted, as the proof is much easier. For such problems, we refer to \cite{ABK22}. Finally, and importantly, for the stability result, we have the uniform estimate \eqref{eq:lnl1}. We would also like to mention that in the case where $\hat{\thet}\in W^{1,\infty}(\Omega)$, we can extend the result for all $\gamma\in (-1,0)$. Since such an extension is very straightforward, we do not provide the details in this paper.

\bigskip

For our second result (the stability of the steady state) we first introduce auxiliary functions defined for $s,t>0$ as
\begin{equation}\label{DFG}
\mathcal{H}^\al(s):= \int_1^s (\G(z))^{-\al}\,{\rm d}z,\quad \G(s):=\int_0^s \kappa(z)\, {\rm d} z,\quad \ff_\alpha(s,t):=s-t - (\mathcal{H}^\al(s) - \mathcal{H}^\al(t))(\G(t))^{\al}.
\end{equation}
These functions are necessary due to the nonconstant heat capacity $\kappa$. In the case where $\kappa$ is constant, we could use the $\gamma$-\blafnc \ directly, for example, the one given in \eqref{bEX}. Since $\kappa$ is nonconstant, we need to introduce \eqref{DFG}. As the pair $(\vc{0},\thetah)$ is the only steady state of the problem \eqref{MB1}--\eqref{MB5}, we want to show that any \bws\ converges in some sense to $(\vc{0},\thetah)$ as $t\to \infty$. For this purpose, we need to introduce a proper concept for measuring the distance between functions - a certain Lyapunov functional. Inspired by recent works \cite{AbBuKa2024, DosPruRaj}, we define a class of Lyapunov functionals $\Lab:\mathbb{R}^3\times(0,+\infty)\times(0,+\infty)\to\mathbb{R}$ for $\alpha\in(0,1)$ and $\beta>0$ by
\begin{equation}\label{LMB}
\Lab(\vv,\thet,\thetah):= \beta|\vv|^2 + \thet-\thetah - (\mathcal{H}^\al(\thet) - \mathcal{H}^\al(\thetah))(\G(\thetah))^{\al}.
\end{equation}
Here we have used the functions $\mathcal{H}^\al, \G:(0,+\infty)\to\R$ defined in \eqref{DFG}. Note that it follows directly from the definition that $\Lab$ is convex with respect to $\vv$ and $\thet$, that $\Lab$ is nonnegative, and that $\Lab(\vv, \thet, \thetah)=0$ if and only if $\vv =\vc{0}$ and $\thet=\thetah$.

\bigskip

Finally, we formulate the main result of the paper, which concerns the stability of the steady state for \bws\ measured by $\Lab$.
\begin{theorem}\label{TMB}
Let $p\in[6/5,11/5)$ and $\underline{\kappa}$, $\overline{\kappa}$, $\underline{\thet}$, $\overline{\thet}>0$ be given and fulfill $\tmin\leq\tmax$, $\underline{\kappa}\leq\overline{\kappa}$. Then there exists $\mu>0$ depending only on $\underline{\kappa}$ and $\Omega$ ($\mu$ is explicitly given below \eqref{eq:sdv}) such that any \bws\ satisfies  for all  $\tau> 0$
\begin{equation}\label{eq:twocrosses}
\|\vv(\tau)\|_2^2\le e^{-\mu \tau}\|\vv(0)\|_2^2,
\end{equation}
provided $p\geq 2$ or $p<2$ with $\|\vv(0)\|_2\leq 1$. In the case $p<2$ and $\|\vv(0)\|_2> 1$ we get 
\begin{equation}\label{snad}
\|\vv(\tau)\|_2^2\le e^{-\mu \tau}\|\vv(0)\|_2^{\frac2p}e^{\|\vv(0)\|_2^2-1}.
\end{equation}

Moreover, let $p \in [8/5,2]$ and $\al \in (1/2,2/3] \cap [\,2-5p/6,\,2/3]$, or let $p \in [2,11/5]$ and $\al \in (1/2,2/3]$.  
Let $R>0$ be arbitrary and fixed. Then there exist constants $\mu,\beta>0$ such that any $\bb$-weak solution with a $(-\alpha)$-\blafnc\ $b$ satisfies, for almost every $\sigma \geq 0$ and for all $\tau \geq \sigma$,
\begin{equation}
  \|\vv_0\|_2^2 + \|\thet_0\|_1\le R \implies
  \intO{L_{\alpha,\beta}(\vv(\tau),\thet(\tau),\thetah)} \le e^{-\mu(\tau-\sigma)}\intO{L_{\alpha,\beta}(\vv(\sigma),\thet(\sigma),\thetah)} . \label{assymptotictwo}
\end{equation}
\end{theorem}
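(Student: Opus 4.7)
The statement contains two claims: the velocity decay~\eqref{eq:twocrosses}/\eqref{snad}, which uses only the kinetic energy inequality~\eqref{eq:kinen1}, and the full Lyapunov decay~\eqref{assymptotictwo}, which requires combining~\eqref{eq:kinen1}, the $\bb$-corrected total energy inequality~\eqref{TE}, and the entropy inequality~\eqref{entropy-limit} in order to recover the temperature dissipation that was discarded when passing from~\eqref{MBtotal2} to~\eqref{TE}.

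For the velocity bound, testing~\eqref{eq:kinen1} with nonnegative $\psi$ approximating $\chi_{(\sigma,\tau)}$ yields $\|\vv(\tau)\|_2^2-\|\vv(\sigma)\|_2^2+2\int_\sigma^\tau\!\!\int_\Omega\S:\D\vv\dxdt\le 0$. By~\eqref{MBBounds}, Korn's inequality, and Poincaré's inequality (both available since $\vv|_{\partial\Omega}=\vc{0}$ and $p\ge 6/5$), the dissipation admits three regimes. For $p\ge 2$ the pointwise bound $(\delta+|\D\vv|)^{p-2}\ge\delta^{p-2}$ gives $\tfrac{d}{dt}\|\vv\|_2^2+\mu\|\vv\|_2^2\le 0$ with an explicit $\mu$, and Gr\"onwall yields~\eqref{eq:twocrosses}. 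For $p<2$ with $\|\vv_0\|_2\le 1$, the monotone kinetic energy keeps $\|\vv(t)\|_2\le 1$ for all $t\ge 0$, and a reverse H\"older argument (based on $\int(\delta+|\D\vv|)^{p-2}|\D\vv|^2\ge c\|\nabla\vv\|_p^2/(1+\|\nabla\vv\|_p^p)^{(2-p)/p}$) shows the dissipation still dominates $\|\vv\|_2^2$, again producing~\eqref{eq:twocrosses}. In the remaining case $p<2$ with $\|\vv_0\|_2>1$, only the weaker bound $\tfrac{d}{dt}\|\vv\|_2^2\le-c\|\vv\|_2^p$ is available; a Bernoulli-type ODE analysis then produces polynomial descent to the threshold $1$ in a finite time $t^*\lesssim\|\vv_0\|_2^{2-p}$, after which the exponential rate from the previous case applies, and splicing the two phases delivers~\eqref{snad}.

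For the Lyapunov decay I take $\bb(\thet,\thetah):=\bigl(\G(\thetah)/\G(\thet)\bigr)^{\alpha}$, which is directly verified to be a $(-\alpha)$-\blafnc\ with primitive $B(\thet,\thetah)=\G(\thetah)^{\alpha}\bigl(\mathcal{H}^{\alpha}(\thet)-\mathcal{H}^{\alpha}(\tmin)\bigr)$. Comparing with~\eqref{LMB} gives
\[
\int_\Omega L_{\alpha,\beta}(\vv,\thet,\thetah)\dx \;=\; \int_\Omega\bigl[\beta|\vv|^2+\thet-B(\thet,\thetah)\bigr]\dx + K(\thetah),
\]
with $K(\thetah)$ time-independent. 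Testing~\eqref{TE} with $\varphi\equiv 1$ and the same cutoff $\psi$, adding $(2\beta-1)$ times~\eqref{eq:kinen1}, and using $\diver\vv=0$ together with $\vv|_{\partial\Omega}=\vc{0}$ to rewrite $\int_\Omega\vv\cdot\nabla\thet\,\bb\dx=-\int_\Omega\vv\cdot\nabla\thetah\,\partial_2 B\dx$, I arrive at
\[
\int_\Omega L_{\alpha,\beta}\Big|_\sigma^\tau + (2\beta-1)\int_\sigma^\tau\!\!\int_\Omega\S:\D\vv\dxdt \;\le\; \int_\sigma^\tau\!\!\int_\Omega\bigl[-\vv\cdot\nabla\thetah\,\partial_2 B+\kappa(\thet)\nabla\thet\cdot\nabla\thetah\,\partial_2\bb\bigr]\dxdt.
\]
Korn and Poincar\'e applied to $(2\beta-1)\int\S:\D\vv$ dominate the velocity part $\int\beta|\vv|^2$ of $L_{\alpha,\beta}$. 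The temperature part of $L_{\alpha,\beta}$ is the Bregman-like convex distance to $\thetah$ encoded in~\eqref{LMB}; to control it from below I couple with~\eqref{entropy-limit} tested against a sequence $\varphi_{\varepsilon}\nearrow\chi_\Omega$ and pass to the limit, producing the dissipation $\int\kappa|\nabla\eta|^2\dx$. Poincar\'e on $\eta-\hat{\eta}\in W^{1,2}_0(\Omega)$, combined with the uniform $L^1$ bound~\eqref{eq:lnl1} and a pointwise convexity argument, then yields a lower bound proportional to the temperature part of $\int_\Omega L_{\alpha,\beta}\dx$, with constants depending on $R$. The right-hand cross-terms are handled by three-factor H\"older (using $|\partial_2 B|\le C\thet^{1-\alpha}$ and~\eqref{eq:lnl1} to obtain $\|\partial_2 B\|_{1/(1-\alpha)}\le C_R$) and absorbed into the dissipation via Young's inequality and the Sobolev embedding of $W^{1,p}_{0,\diver}$ into $L^{1/(\alpha-1/2)}$; the term carrying $\partial_2\bb$ is treated analogously after rewriting $\kappa\nabla\thet/\thet^{\alpha}=\G(\thet)^{-\alpha}\nabla\G(\thet)$. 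The admissible range $\alpha\in(1/2,2/3]\cap[2-5p/6,2/3]$ for $p\in[8/5,2]$ (respectively $\alpha\in(1/2,2/3]$ for $p\in[2,11/5)$) is precisely the window in which these integrability exponents balance. A final Gr\"onwall argument then delivers~\eqref{assymptotictwo} for almost every $\sigma\ge 0$.

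The chief obstacle is exactly the temperature dissipation lost when weakening the identity~\eqref{MBtotal2} to the inequality~\eqref{TE}: on its own,~\eqref{TE} furnishes no mechanism for the decay of the temperature part of $L_{\alpha,\beta}$. The replacement supplied by the entropy inequality is lossy---$\kappa|\nabla\eta|^2=\kappa|\nabla\thet|^2/\thet^2$ is weaker than the discarded $-\kappa|\nabla\thet|^2\partial_1\bb\sim\kappa|\nabla\thet|^2\thet^{-\alpha-1}$ by a factor $\thet^{1-\alpha}$---and compensating this gap is what forces the sharp interpolation and the stated range of $\alpha$. The qualifier "a.e.~$\sigma$" in~\eqref{assymptotictwo} arises from the Lebesgue differentiation step needed to pass from the test-function formulation back to pointwise-in-time inequalities.
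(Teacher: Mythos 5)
Your treatment of the velocity decay \eqref{eq:twocrosses}--\eqref{snad} matches the paper's approach in all essentials: the kinetic energy inequality yields the monotone quantity $f(t)=\|\vv(t)\|_2^2$, the reverse-H\"older estimate of type \eqref{eq:sdv} produces $\min(f,f^{p/2})\lesssim\int_\Omega\S:\D\vv$, and the case split at $\|\vv_0\|_2=1$ with a Bernoulli/splicing argument gives \eqref{snad}. This part is fine.

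For the Lyapunov decay \eqref{assymptotictwo}, however, there is a genuine gap. You propose to recover the temperature dissipation by testing the entropy inequality \eqref{entropy-limit} against $\varphi_\varepsilon\nearrow\chi_\Omega$ and then applying Poincar\'e to $\eta-\hat\eta\in W^{1,2}_0(\Omega)$, thereby obtaining $\int_\Omega\kappa|\nabla\eta|^2\dx$ as the dissipation. But $\kappa|\nabla\eta|^2=\kappa|\nabla\thet|^2\thet^{-2}$ controls only the logarithmic quantity $\int_\Omega(\eta-\hat\eta)^2$, whereas the temperature part of $L_{\alpha,\beta}$, namely $\int_\Omega f_\alpha(\thet,\thetah)$, grows \emph{linearly} in $\thet$ on the set where $\thet$ is large. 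The uniform $L^1$ bound \eqref{eq:lnl1} does not bridge this gap: taking $\thet=\thetah+N\chi_E$ with $|E|\sim N^{-1}$ keeps $\int_\Omega\thet$ bounded and makes $\int_\Omega(\eta-\hat\eta)^2\to 0$, while $\int_\Omega f_\alpha(\thet,\thetah)$ remains bounded away from zero. So no convexity argument can produce the inequality $\int_\Omega f_\alpha(\thet,\thetah)\lesssim\int_\Omega\kappa|\nabla\eta|^2$ that your Gr\"onwall step would need. What the paper actually does (Section~\ref{SectEntrop}) is \emph{renormalize} the entropy inequality: they test \eqref{entropy-limit2} against $(1-\varphi)\,\mathcal T_k(\exp\eta_h)\bigl(\G(\thetah)/\G(\mathcal T_k(\exp\eta_h))\bigr)^{\alpha}\psi$, which after the limits $h\to 0$, $k\to\infty$ produces the strictly stronger dissipation $\int_\Omega \G(\thetah)\bigl|\nabla(\G(\thet)/\G(\thetah))^{(1-\alpha)/2}\bigr|^2$ appearing in \eqref{En-Final}, and only this quantity is shown in \eqref{Holder} (citing Lemmas~3.1--3.2 of \cite{AbBuKa2024}) to dominate $\int_\Omega f_\alpha(\thet,\thetah)$. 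Your proposal names the factor $\thet^{1-\alpha}$ by which plain entropy dissipation is too weak but does not supply the renormalization mechanism that actually compensates it.

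A second, structural, issue: fixing $\bb(\thet,\thetah)=(\G(\thetah)/\G(\thet))^{\alpha}$ proves the claim only for that one correction function, while Theorem~\ref{TMB} asserts decay for any $\bb$-weak solution with an arbitrary $(-\alpha)$-\blafnc. The paper handles this by testing \eqref{TE} with a sequence $\varphi_n\to 0$ in $\Omega$, $\varphi_n=1$ on $\partial\Omega$, so that in the limit $n\to\infty$ every $\bb$-dependent term in \eqref{TE} vanishes (using Hardy's inequality for the $\nabla\varphi_n$ terms and $L^1$ integrability for the $\varphi_n$ terms); the resulting inequality \eqref{dec-temp-after} is $\bb$-free. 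Your choice $\varphi\equiv 1$ keeps all the $\bb$-dependent cross terms, and in particular the term $\int_\Omega\kappa(\thet)\nabla\thet\cdot\nabla\thetah\,\partial_2\bb$ involves $\nabla\thet$, which cannot be absorbed into the velocity dissipation $(2\beta-1)\int_\Omega\S:\D\vv$ alone; you would again need the renormalized temperature dissipation you do not construct.
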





Please note that the statement of the theorem is almost identical to the main theorem in~\cite{AbBuKa2024}.  
Nevertheless, the nature of the result is substantially different. In particular, the admissible range of the growth parameter $p$ is different, which necessitates the introduction of a notion of weak solution distinct from the one employed in~\cite{AbBuKa2024}.  
This modification is not merely of technical character, but rather reflects a deeper difference in the underlying analysis.  
As a consequence, several arguments used in~\cite{AbBuKa2024} cannot be applied directly in the present setting, and additional tools are required to establish the desired estimates.  
In particular, the adaptation of the entropy inequality plays a crucial role in our approach.  
Of course, one could attempt to construct solutions directly so that they satisfy the inequality \eqref{dec-temp}. However, our result goes significantly further, as it provides a much stronger conclusion than what such a direct construction would yield.

Also, note that the long time behaviour simplifies when $\delta=0$ in $\S$ and $p<2$, since then the extinction in a finite time of the velocity holds, see \cite{A21}. Thus $u$ decays to zero in finite time and  only the heat equation without the $L^1$ term remains for large times.


The article is split into two main parts. In Section~\ref{sec:ex} we prove existence of the \bws\ and in Section~\ref{sec:conv} we show that any \bws\ must converge to the unique steady state.



\section{Existence of \texorpdfstring{\bws}{b} - the proof of Theorem~\ref{thm:ex}}
\label{sec:ex}

In this section, we provide a complete proof of existence. As many steps parallel the procedure presented in \cite{ABK22}, we occasionally omit routine details and highlight instead the essential differences.

\subsection{Faedo--Galerkin approximations}
For any $n\in \N$ there exists a pair $(\vv^n, \thet^n)$ solving for almost all $t\in (0,+\infty)$ the following system
\begin{align}
\label{ode11}
&\intO{[\partial_t\vv^{n} \cdot \vw_j  - (\vv^{n} \otimes \vv^{n}): \nabla \vw_j + \S^{n}: \D\vw_j]} = 0 \qquad \mbox{ for all } j=1,\dots,n,
\\
\label{ode22}
&\begin{aligned}
&\dual{\partial_t\thet^{n}}{\zeta} +\intO{[-\thet^{n} \vv^{n} \cdot \nabla \zeta + \kappa(\thet^n)\nabla\thet^{n}\cdot \nabla \zeta]} = \intO{\S^{n}: \D\vv^{n} \zeta} \\
&\qquad \mbox{ for all }\zeta\in W_0^{1,2}(\Omega),
\end{aligned}
\end{align}
complemented by the initial conditions $\vv^n(0, \cdot)=\vv^n_0$, $\thet^n(0, \cdot)=\thet_0^n$. 
The functions $\{\vw_j\}_{j\in \N}$ constitute the basis of Hilbert space $W^{3,2}(\Omega; \R^3) \cap L^2_{0, \diver}$ orthonormal in $L^2(\Omega; \R^3)$,
 see \cite[Appendix~6.4]{mnrr1996}, and the function $\vv^n$ is a linear combination of first $n$ elements of the basis and $\S^n$ is just shortcut for composed function, i.e. 
\begin{equation}
  \vv^n:= \sum_{i=1}^n c^{n}_i(t)\vw_i(x) \quad \mbox{ and } \quad   \S^n:= \S^*(\thet^n, \D \vv^n).
  \end{equation}
The function $\vv^n_0$ is the projection of $\vv_0$ onto linear hull of $\{\vw_1,\dots,\vw_n\}$ in $L^2(\Omega,\R^3)$ and $\thet^n_0$ is a suitable mollification of $\thet_0$, for details see \cite[Appendix B, (B.2)-(B.3)]{BuFeMa2009}. They satisfy
\begin{equation}\label{conv:ic}
    \vv^n_0\to\vv_0\quad\mbox{in $L^2(\Omega)$,}\quad \thet_0^n\to\thet_0\quad\mbox{in $L^1(\Omega)$ as $n\to+\infty$,}\quad  \tmin\leq\thet_0^n\quad\mbox{a.e. in $\Omega$.}
\end{equation}
  
The approximations $(\vv^n, \thet^n)$ possess for all $T>0$ the following regularity
\begin{gather}
\label{reg:vn}  \vv^n\in \mathcal{C}^1([0,T];W^{3,2}(\Omega))\\
 \label{eq:approxreg} \thet^n\in L^2(0,T;W^{1,2}(\Omega)),\quad\partial_t\thet^n\in L^2(0,T;(W^{1,2}_0(\Omega))^*),\quad \thet^n\in \mathcal{C}([0,T];L^{2}(\Omega)).
\end{gather}
In addition, the function $\thet^n$ attains the boundary conditions $\thetah$, i.e., $\thet^n - \thetah \in L^2(0,T; W^{1,2}_0(\Omega))$ and fulfills the minimum principle
\begin{equation}\label{eq:minprinn}
\thet^n \ge \underline{\thet}>0 \mbox{ a.e. in }Q.
\end{equation}
We do not present any proof of the existence of $(\vv^n, \thet^n)$. Instead, we refer a reader to the article \cite[Appendix B]{BuFeMa2009}, where a similar problem is treated.

\subsection{Inequalities for approximations}
Here we derive inequalities for approximations of solutions which, after the limit passage $n \to +\infty$, yield the required properties of the constructed weak solution. In Sections~\ref{sec:TE}, \ref{sec:EB} and~\ref{sec:initial} we need to apply the chain rule to the time derivative of $\thet^n$, which is defined only in a dual space. This can be achieved by employing the method of Steklov averages. For details of this argument, we refer the reader to \cite{ABK22,AbBuKa2024}.

\subsubsection{Kinetic energy inequality}
Let us multiply \eqref{ode11} by $c^n_i$, sum over $i=1,\dots,n$, use the fact that $\diver \vv^n=0$, then multiply by $\psi \in \mathcal{C}^{\infty}_0(0,+\infty)$ and integrate over the time interval $(0,+\infty)$. This yields the kinetic energy equality for the Galerkin system
\begin{equation}\label{kinetic-eq-int}
  \intnn{\left(\intO{-|\vv^n|^2 \dert \psi} 
  + 2\intO{\S^n:\D \vv^n \psi}\right)} = 0.
\end{equation}
By the regularity of $\vv^n$ in \eqref{reg:vn}, relation \eqref{kinetic-eq-int} further implies
\begin{equation}\label{kinetic-eq}
  \nm{\vv^n(T)}_2^2 + 2\intTO{\S^n:\D \vv^n} = \nm{\vv^n_0}_2^2,
\end{equation}
for all $T>0$.

\subsubsection{Total energy balance}\label{sec:TE}
Next, let $\varphi \in \mathcal{C}^1(\Omega)$ be nonnegative and satisfy $\varphi = 1$ on $\partial \Omega$. We then set 
$$
  \zeta := 1 - \bb(\thet^n, \thetah)\varphi
$$
in~\eqref{ode22}. Such a choice is legitimate since $\bb(\thet^n,\thetah)=1$ on $\partial \Omega$, and by the regularity of $\bb$ and $\thet^n$ we conclude that for any $t>0$, $\zeta(t) \in W^{1,2}_0(\Omega)$. Substituting this test function into \eqref{ode22}, we obtain
\begin{equation}\label{rur}
\begin{aligned}
&\dual{\partial_t\thet^{n}}{1-\bb(\thet^n, \thetah)\varphi} 
+ \int_{\Omega} -\thet^{n} \vv^{n} \cdot \nabla\!\left[1-\bb(\thet^n, \thetah)\varphi\right] 
+ \kappa(\thet^n)\nabla\thet^{n}\cdot \nabla\!\left[1-\bb(\thet^n, \thetah)\varphi\right] \dx \\
&\qquad = \intO{\S^{n}:\D\vv^{n} \, [1-\bb(\thet^n, \thetah)\varphi]}.
\end{aligned}
\end{equation}
Finally, let $\psi \in \mathcal{C}^\infty_0(0,+\infty)$ be arbitrary. We multiply the above identity by $\psi$ and integrate with respect to $t \in (0,+\infty)$. In what follows, we evaluate all resulting integrals. 

Using the definition \eqref{Bdef}, the fact that $\thetah$ is time-independent, and an integration by parts, the time derivative term becomes
\begin{equation*}
\int_0^{+\infty}\dual{\partial_t\thet^{n}}{[1-\bb(\thet^n, \thetah)\,\varphi]\,\psi}\dt = - \intnO{(\thet^{n} - \B(\thet^n, \thetah)\,\varphi)\, \partial_t \psi}.
\end{equation*}
For the second term in \eqref{rur}, we use the divergence-free condition for $\vv^n$, the zero trace of $\vv^n$, and the integration by parts to conclude
\begin{equation*}
\begin{split}
   \intnO{-\thet^{n} \vv^{n} \cdot \nabla\!\left[1-\bb(\thet^n, \thetah)\varphi\right]  \psi}&=
   \intnO{\thet^{n} \vv^{n} \cdot \nabla\!\left[\bb(\thet^n, \thetah)\varphi\right]  \psi}\\
   &=-\intnO{\vv^{n} \cdot \nabla \thet^{n}  \, \bb(\thet^n, \thetah)\, \varphi\,  \psi}.
\end{split}
\end{equation*}
Finally the elliptic term is evaluated as 
\begin{equation*}
 \begin{array}{l}\displaystyle\vspace{6pt}
\intnO{\kappa(\thet^n)\nabla\thet^{n}\cdot \nabla [1-\bb(\thet^n, \thetah)\,\varphi]\,\psi}\\ \displaystyle\vspace{6pt}
\hfill = - \intnO{\kappa(\thet^n)\nabla\thet^{n}\cdot \nabla \varphi \,\bb(\thet^n, \thetah)\,\psi} - \intnO{\kappa(\thet^n)|\nabla\thet^{n}|^2 \,\partial_1\bb(\thet^n, \thetah)\, \varphi\, \psi}\\ \displaystyle\vspace{6pt}
\hfill-\intnO{\kappa(\thet^n)\nabla\thet^{n}\cdot\nabla\thetah \,\partial_{2} \bb(\thet^n, \thetah)\,\varphi \,\psi}.
\end{array}
\end{equation*}
Collecting all the terms we deduce  the final identity
\begin{equation}\label{temp} 
\begin{aligned}
&- \intnO{(\thet^{n} - \B(\thet^n, \thetah)\varphi) \,\partial_t \psi} -\intnO{\vv^{n}\cdot\nabla \thet^{n}\bb(\thet^n, \thetah)\varphi \psi}\\
&\quad - \intnO{\kappa(\thet^n)\nabla\thet^{n}\cdot \nabla \varphi \,\bb(\thet^n, \thetah)\,\psi} - \intnO{\kappa(\thet^n)|\nabla\thet^{n}|^2 \,\partial_1\bb(\thet^n, \thetah)\, \varphi \,\psi}\\
&\quad -\intnO{\kappa(\thet^n)\nabla\thet^{n}\cdot\nabla\thetah \,\partial_{2} \bb(\thet^n, \thetah)\,\varphi \,\psi}\\
&\qquad  = \intnO{\S^{n}: \D\vv^{n} [1-\bb(\thet^n, \thetah)\,\varphi]\,\psi}.
\end{aligned}
\end{equation}

We sum the last outcome for the temperature balance \eqref{temp} with the kinetic energy balance~\eqref{kinetic-eq-int} and obtain
\begin{equation}\label{2.8}
\begin{array}{l}\displaystyle\vspace{6pt}
  - \intnO{\left[\frac{|\vv^n|^2}{2} + \thet^{n} - \B(\thet^n, \thetah)\,\varphi\right] \,\partial_t \psi}-\intnO{\vv^{n}\cdot\nabla \thet^{n}\bb(\thet^n, \thetah)\varphi \psi}
  \\
 \displaystyle\vspace{6pt}
 - \intnO{\kappa(\thet^n)\nabla\thet^{n}\cdot \nabla \varphi\, \bb(\thet^n, \thetah)\,\psi +\kappa(\thet^n)\nabla\thet^{n}\cdot\nabla\thetah \,\partial_{2} \bb(\thet^n, \thetah)\,\varphi \,\psi}\\ \displaystyle\vspace{6pt}
 + \intnO{\left(\S^{n}: \D\vv^{n} \,\bb(\thet^n, \thetah)-\kappa(\thet^n)|\nabla\thet^{n}|^2 \,\partial_1\bb(\thet^n, \thetah)\right) \,\varphi \,\psi}=0.
\end{array}
\end{equation}


\subsubsection{Entropy balance}\label{sec:EB}
We set $\zeta := \varphi / \thet^n$ as a test function in \eqref{ode22}, where $\varphi \in W^{1,2}_0(\Omega)$ is arbitrary, and directly obtain the following identity for the approximate entropy $\eta^n := \ln \thet^n$:
\begin{equation}
\begin{split}\label{entropy-weak}
&\dual{\dert \eta^n}{\varphi} + \intO{\kappa(\thet^n)\nabla\eta^n \cdot\nabla\varphi - \eta^n\,\vv^n\cdot \nabla\varphi} = \intO{\frac{\S^n:\D{\vv}^n}{\thet^n}\,\varphi+\kappa(\thet^n)|\nabla\eta^n|^2\,\varphi}.
\end{split}
\end{equation}
Compare \cite[Section 3, Part B]{ABK22}. 

\subsection{Limit in the Faedo--Galerkin approximations}In this section, we present the limiting procedure. Many aspects of this procedure are standard; therefore, we focus only on those parts that essentially differ from the existing literature. These parts will be discussed in detail.

\subsubsection{A~priori $n$-independent estimates}
Following, for instance, \cite{BuFeMa2009, BuMaRa09} and introducing $\thetat^n := \thet^n - \thetah$, it is standard to deduce the following $n$-independent a~priori estimates, valid for all $T>0$, $r \in [1,5/3)$, $s \in [1,5/4)$, $a\in (0,1/2)$, and $Q_T = (0,T)\times\Omega$:
\begin{align}
    \label{apr:vn}
      \nm{\vv^{n}}_{L^\infty(0,T; L^2_{0,\diver})}+\nm{\vv^{n}}_{L^p(0,T; W^{1,p}_{0,\diver})}+\nm{\S^n}_{L^{p'}(Q_T)}+\nm{\vv^{n}}_{L^{\frac{5p}3}(Q_T)}&\leq C,
    \\
   \label{timen}
 \nm{\thet^n}_{L^\infty(0,T;L^1(\Omega))}+\nm{\partial_t \thet^n}_{L^1(0, T; (W^{2, 3}_0(\Omega))^*)}+\nm{\partial_t\vv^n}_{L^{\frac{5p}6}(0, T; (W^{1,\frac{5p}{5p-6}}_{0,\diver})^*)} &\leq C,\\
\label{eq:re3}
   \nm{(\thet^n)^a}_{L^2(0,T;W^{1,2}(\Omega))}+\nm{\thet^n}_{L^r(Q_T)}+\nm{\thetat^n}_{L^s(0, T; W_0^{1,s}(\Omega))}&\leq C.
\end{align}
The constant $C>0$ may depend on $a$, $p$, $r$, $s$, and $T$, but it is independent of $n$. In particular, the above estimates may not be uniform as $T \to \infty$. 
We also note that the estimate for the time derivative $\partial_t \thet^{n}$ is not optimal; however, it is sufficient for our purposes, namely for the application of the Aubin--Lions lemma.

By virtue of the established uniform estimates \eqref{apr:vn}--\eqref{eq:re3} and applying the Aubin--Lions compactness lemma, we can extract a subsequence (which we do not relabel) and find $(\vv, \thet, \S)$ such that, for every $T>0$,
\begin{align}
 \vv^n &\rightharpoonup^* \vv && \mbox{ weakly-* in } L^\infty(0, T; L^2_{0,\diver}), \label{Linfty2}\\
 \vv^n &\rightharpoonup \vv &&\mbox{ weakly in } L^p(0, T; W^{1,p}_{0,\diver})\cap W^{1,\frac{5p}6}(0,T; (W^{1,\frac{5p}{5p-6}}_{0,\diver})^*),\label{Lp}\\
 \S^n &\rightharpoonup \S &&\mbox{ weakly in } L^{p'}(\QT; \R^3),\label{Lp-S}\\
 \vv^n&\to\vv &&\mbox{ strongly in }L^q(\QT; \R^3) \mbox{ for any $q\in [1, {5p}/{3})$, and a.e. in $\QT$} \label{v-strong}\\
 (\thet^n)^a&\rightharpoonup   (\thet)^a &&\mbox{ weakly in } L^2(0, T; W^{1,2}(\Omega)) \mbox{ for any } a\in (0, {1}/{2}),\label{thet-weak}\\
 \thet^n &\to \thet &&\mbox{ strongly in $L^r(\QT)$ for any $r\in [1, 5/3)$, and a.e. in $\QT$},\label{thet-strong}\\
  \thet^n -\thetah &\rightharpoonup \thet -\thetah && \mbox{ weakly in } L^s(0, T; W_0^{1, s}(\Omega))  \mbox{ for any } s\in [1, 5/4),\label{grad-thet-weak}\\
\partial_t\thet^n &\rightharpoonup^* \partial_t\thet && \mbox{ weakly$^*$ in } \mathcal{M}(0, T; (W_0^{2, 3}(\Omega))^*).\label{time-thet-weak}                                                   
\end{align}
The above convergence results allow us to pass to the limit $n \to \infty$ in \eqref{ode11} and to deduce \eqref{weak1}. To identify the limiting viscous stress tensor $\S$, we refer to the solenoidal Lipschitz truncation method, which is a standard tool for showing that $\S = \S^*(\thet, \D \vv)$ for our system provided $p > 6/5$; see \cite{DieRuWo2010, BreDieSchwa2013, BeMaRa20, BuGwMaSw12}.

Moreover, using \eqref{thet-strong}, \eqref{eq:minprinn}, and the first part of the uniform estimate \eqref{timen}, it follows from the Fatou lemma that, for all $T>0$,
\begin{equation}
\begin{split}\label{theinf1}
  \thet \geq \tmin \quad \mbox{a.e. in $(0,+\infty)\times\Omega$}, \quad \thet \in L^{\infty}(0,T; L^1(\Omega)).
\end{split}
\end{equation}
Next, we focus more on issues related to the temperature equation.

\subsubsection{Uniform $T$-independent estimates for $\thet$}
In this section, we show that the temperature $\thet$ satisfies \eqref{eq:lnl1}.
For arbitrary $m\in\N$ we define the function $F_m(s)$ as follows
$$
F_m(s):=\left\{\begin{aligned}&0 &&\textrm{for }s\le \tmax,\\
&m(s-\tmax) &&\textrm{for } s\in (\tmax, \tmax + 1/m),\\
&1 &&\textrm{for }s\ge \tmax + 1/m
\end{aligned}
\right.
$$
and its primitive function $P_m(s)$ as 
$$
P_m(s):=\left\{\begin{aligned}&0 &&\textrm{for }s\le \tmax,\\
&\frac{m(s-\tmax)^2}{2} &&\textrm{for } s\in (\tmax, \tmax + 1/m),\\
&s-\tmax-1/(2m) &&\textrm{for }s\ge \tmax + 1/m.
\end{aligned}
\right.
$$
Note that since $\thetah\le \tmax$ and $\thet^n=\thetah$ on $\partial \Omega$, we have that $F_m(\thet^n)\in L^2(0,T;W_0^{1,2}(\Omega))$ and thus we can set $\zeta:= F_m(\thet^n)$ in \eqref{ode22}. Doing so and using the fact that $\diver \vv^n =0$, we obtain the identity 
$$
\partial_t \intO{P_m(\thet^n)}  +\intO{\kappa(\thet^n)F_m'(\thet^n)|\nabla\thet^{n}|^2} = \intO{\S^{n}: \D\vv^{n} F_m(\thet^n)}.
$$
Since $F_m$ is nondecreasing, the second term on the left-hand side is nonnegative and can be neglected. Similarly, the term on the right-hand side can always be estimated by $\S^n : \D\vv^n$ since $F_m \le 1$. Integrating the resulting inequality over time $t \in (0,\tau)$, and using \eqref{kinetic-eq} along with the properties of $P_m(s)$, we obtain for all $\tau \in (0,\infty)$
$$
\begin{aligned}
\intO{P_m(\thet^n(\tau))} &\le \intO{P_m(\thet_0^n)} + \int_0^\tau \intO{\S^n : \D\vv^n} \, d\tau \\
&\le \intO{(\thet_0^n - \tmax)_+} + \intnO{\S^n : \D\vv^n} \le \intO{\frac12 |\vv^n_0|^2 + (\thet_0^n - \tmax)_+}.
\end{aligned}
$$
Letting $m \to \infty$ in the above inequality, we obtain
$$
\intO{(\thet^n(\tau) - \tmax)_+} \le \intO{\frac12 |\vv^n_0|^2 + (\thet_0^n - \tmax)_+},
$$
and consequently, letting $n \to \infty$ and applying the Fatou lemma together with the convergence results \eqref{conv:ic}, we have for almost all $\tau \in (0,\infty)$
$$
\intO{(\thet(\tau) - \tmax)_+} \le \intO{\frac12 |\vv_0|^2 + (\thet_0 - \tmax)_+}.
$$
Thus, using the above estimate, we deduce
$$
\intO{\thet(\tau)} \le \intO{(\thet(\tau) - \tmax)_+ + \tmax} \le \intO{\frac12 |\vv_0|^2 + (\thet_0 - \tmax)_+ + \tmax},
$$
which is precisely the desired estimate \eqref{eq:lnl1}.

\subsubsection{Weak lower semicontinuity of $\S : \D\vv$ and the limit in the kinetic energy inequality}

We claim that the following inequalities hold for any $0 \le s < t < T<\infty$, any nonnegative $\varphi \in L^{\infty}(\Omega)$ and any nonnegative  $\psi \in L^{\infty}(0,T)$:
\begin{align}
\label{eq:wlsc1} 
\intTO{\S : \D\vv \, \bb(\thet, \thetah) \, \varphi \, \psi} 
&\le \liminf_{n \to +\infty} \intTO{\S^{n} : \D\vv^{n} \, \bb(\thet^n, \thetah) \, \varphi \, \psi},\\
\label{eq:wlsc2} 
\intTO{\S : \D\vv \, \varphi\, \psi } 
&\le \liminf_{n \to +\infty} \intTO{\S^{n} : \D\vv^{n}\, \varphi\, \psi},\\
\label{eq:wlsc3} 
\int_s^t \int_\Omega \frac{\S : \D\vv}{\thet} \, \varphi\, \psi\, \dxdt 
&\le \liminf_{n \to +\infty} \int_s^t \int_\Omega \frac{\S^n : \D\vv^n}{\thet^n} \, \varphi\, \psi\, \dxdt.
\end{align}
Since all three inequalities are proved in a similar way, we present only a proof of the first one. We use the monotonicity of $\S^*(\thet^n, \cdot)$ and the nonnegativity of $\bb$, $\varphi$, and $\psi$ to obtain
\begin{equation}
\begin{aligned}\label{eq:221}
  &\int_{Q_T} \S^n : \D\vv^n \, \bb(\thet^n, \thetah) \varphi \psi \, \dxdt
  \ge \int_{Q_T} \S^n : \D\vv \, \bb(\thet^n, \thetah) \varphi \psi \, \dxdt \\
  &\qquad + \int_{Q_T} \S^*(\thet^n, \D\vv) : \D\vv^n \, \bb(\thet^n, \thetah) \varphi \psi \, \dxdt
  - \int_{Q_T} \S^*(\thet^n, \D\vv) : \D\vv \, \bb(\thet^n, \thetah) \varphi \psi \, \dxdt.
\end{aligned}
\end{equation}
Now, we observe that we can pass to the limit $n \to +\infty$ on the right-hand side. Indeed, $\bb(\thet^n, \thetah) \to \bb(\thet, \thetah)$ in $L^q(Q)$ for any $q>1$, because $\bb$ is bounded (see Definition~\ref{def:blafnc}, property \eqref{item0}), and we have the pointwise convergence of $\thet^n$ (see \eqref{thet-strong}). Similarly, using the growth assumption on $\S^*$ stated in \eqref{MBBounds}, the pointwise convergence of $\thet^n$ \eqref{thet-strong}, and the boundedness of $\bb$, we can apply the Lebesgue dominated convergence theorem to obtain
$$
\S^*(\thet^n, \D\vv) \, \bb(\thet^n, \thetah) \varphi \psi 
\to \S(\thet, \D\vv) \, \bb(\thet, \thetah) \varphi \psi \quad \text{strongly in } L^{p'}(Q_T)
$$
as $n \to +\infty$. Using also the convergence results \eqref{Lp} and \eqref{Lp-S}, together with the identification $\S = \S^*(\thet, \D\vv)$, the convergence of the right-hand side of \eqref{eq:221} to 
$$
\int_{Q_T} \S : \D\vv \, \bb(\thet, \thetah) \varphi \psi \, \dxdt
$$ 
is clear. Considering the $\liminf_{n \to +\infty}$ of \eqref{eq:221} then yields \eqref{eq:wlsc1}.
Furthermore, using \eqref{v-strong} together with \eqref{eq:wlsc2} in \eqref{kinetic-eq} directly leads to  \eqref{eq:kinen1}.

\subsubsection{Limit in the $b$ - corrected total energy inequality}\label{sec:lvteb}
Here, we want to let $n \to \infty$ in \eqref{2.8} and deduce \eqref{TE}. The limit passage in the first integral of \eqref{2.8} is straightforward, since the convergence results \eqref{v-strong} and \eqref{thet-strong} hold, and $B$ satisfies the growth assumption~\eqref{rem:B-growth} with negative~$\gamma$.

The second integral in \eqref{2.8} can be rewritten as
$$
\intnO{\vv^{n}\cdot\nabla \thet^{n}\,\bb(\thet^n, \thetah)\varphi \psi}
=\intnO{\vv^{n}\cdot\frac{\nabla[(\thet^{n})^{1+\gamma}]}{1+\gamma}\,
\frac{\bb(\thet^n, \thetah)}{(\thet^n)^\gamma}\,\varphi \psi}.
$$
Since $\gamma+1 \in (0,1/2)$, we can use \eqref{thet-weak} to conclude that  
$\nabla[(\thet^{n})^{1+\gamma}]$ converges weakly to $\nabla[(\thet)^{1+\gamma}]$ in 
$L^2(0,T;L^2(\Omega;\R^3))$ for any $T>0$.  Next, thanks to the assumption \eqref{item0} on $\bb$ and the strict positivity of $\thet^n$ (see \eqref{eq:minprinn}), we infer that $\bb(\thet^n,\thetah)(\thet^n)^{-\gamma}$ is bounded. Thus, using \eqref{v-strong} and \eqref{thet-strong}, we obtain that 
$$
\vv^{n}\,\bb(\thet^n,\thetah)(\thet^n)^{-\gamma} \to 
\vv\,\bb(\thet,\thetah)(\thet)^{-\gamma} 
\quad\text{strongly in } L^2(0,T;L^2(\Omega;\R^3)).
$$
Combining these two convergence results, and since $\psi$ is compactly supported in $(0,\infty)$, we deduce that  
$$
\begin{aligned}
&\lim_{n\to \infty}\intnO{\vv^{n}\cdot\nabla \thet^{n}\,\bb(\thet^n, \thetah)\varphi \psi}
=\lim_{n\to \infty}\intnO{\vv^{n}\cdot\frac{\nabla[(\thet^{n})^{1+\gamma}]}{1+\gamma}\,
\frac{\bb(\thet^n, \thetah)}{(\thet^n)^\gamma}\,\varphi \psi} \\
&\qquad =\intnO{\vv\cdot\frac{\nabla[(\thet)^{1+\gamma}]}{1+\gamma}\,
\frac{\bb(\thet, \thetah)}{(\thet)^\gamma}\,\varphi \psi} =\intnO{\vv\cdot\nabla \thet\,\bb(\thet, \thetah)\varphi \psi}.
\end{aligned}
$$

Using a similar procedure, we rewrite the third integral in \eqref{2.8} as follows:
$$
\begin{aligned}
&\intnO{\kappa(\thet^n)\nabla\thet^{n}\cdot \nabla \varphi\, \bb(\thet^n, \thetah)\,\psi 
+ \kappa(\thet^n)\nabla\thet^{n}\cdot\nabla\thetah \,\partial_{2} \bb(\thet^n, \thetah)\,\varphi \,\psi} \\
&\qquad = \frac{1}{1+\gamma}\intnO{\psi \,\kappa(\thet^n)\nabla (\thet^{n})^{1+\gamma} \cdot 
\left(\nabla \varphi\, \frac{\bb(\thet^n, \thetah)}{(\thet^n)^{\gamma}} 
+ \nabla\thetah \, \frac{\partial_{2}\bb(\thet^n, \thetah)}{(\thet^n)^{\gamma}}\,\varphi \right)}.
\end{aligned}
$$
Since $\bb$ and $\kappa$ are bounded, and $\bb$ satisfies the growth assumptions \eqref{item0} and \eqref{item2}, we can use arguments analogous to those applied for the preceding integral and directly identify the limit. 

We will not pass to the limit in the last term of \eqref{2.8}; instead, we show that it is nonnegative. Indeed, it follows directly from the nonnegativity of $-\partial_1 b$ (see \eqref{item3}), the assumptions \eqref{MBBounds}, the nonnegativity of $b$ (see \eqref{item0}), and the nonnegativity of $\varphi$ and $\psi$ that
$$
\intnO{\left(\S^{n}: \D\vv^{n} \,\bb(\thet^n, \thetah) 
- \kappa(\thet^n)|\nabla\thet^{n}|^2 \,\partial_1\bb(\thet^n, \thetah)\right) \varphi \psi} \ge 0.
$$
Combining all above convergence results, we can let $n\to \infty$ in \eqref{2.8} to deduce \eqref{TE}.

\subsubsection{Limit in the entropy balance}\label{sec:leb} Let $\varphi \in \mathcal{C}^1_0(\Omega)$ and $\psi\in \mathcal{C}_0^1(0,T)$ with some $T>0$ be arbitrary and nonnegative. We multiply \eqref{entropy-weak} by $\psi$ and integrate with respect to $t\in (0,T)$. Using integration by parts, we obtain 
\begin{equation}
\begin{split}\label{entropy-weak2}
&\intTO{ -\eta^n \varphi \dert\psi  + \kappa(\thet^n)\nabla\eta^n \cdot\nabla\varphi \psi - \eta^n\,\vv^n\cdot \nabla\varphi \psi} \\
&\qquad = \intTO{\frac{\S^n:\D{\vv}^n}{\thet^n}\,\varphi \psi +\kappa(\thet^n)|\nabla\eta^n|^2\,\varphi \psi}.
\end{split}
\end{equation}
By virtue of the established convergence results \eqref{thet-weak} and \eqref{thet-strong}, 
we deduce that $\eta^n \to \eta$ in $L^q(Q_T)$ for any $q>1$ and 
$\nabla\eta^n \rightharpoonup \nabla\eta$ in $L^2(Q_T;\R^3)$. 
These convergence results allow us to pass to the limit as $n\to+\infty$ in all terms on the left-hand side of \eqref{entropy-weak2} and thus obtain the left-hand side of \eqref{entropy-limit}. 

Next, we focus on the right-hand side of \eqref{entropy-weak2}. 
It is sufficient to establish convergence with the inequality sign in order to deduce \eqref{entropy-limit}. 
The first term on the right-hand side has already been treated in \eqref{eq:wlsc3}. 
A similar result holds for the second term: it is enough to use the weak lower semicontinuity of convex functionals together with the above-mentioned convergence results for $\eta^n$ and $\nabla \eta^n$. 
Indeed, we have
$$
\begin{aligned}
\liminf_{n\to+\infty}\int_{Q_T}\kappa(\thet^n)|\nabla\eta^n|^2\varphi\, \psi \dxdt &\geq\lim_{n\to+\infty}\int_{Q_T}\kappa(\thet^n)(2\nabla\eta^n\nabla\eta-|\nabla\eta|^2)\varphi\, \psi \dxdt \\
&=\int_{Q_T}\kappa(\thet)|\nabla\eta|^2\varphi\, \psi \dxdt.
\end{aligned}
$$
Combining all results in this paragraph gives \eqref{entropy-limit}.

\subsection{Attaining of the initial condition}\label{sec:initial}
Using the identity \eqref{kinetic-eq}, we can also follow the classical procedure and deduce that 
$\lim_{t\to0}\|\vv(t)-\vv_0\|_{2}=0$, see e.g.~\cite[Corollary~4.7]{RoRoSa2016}. 
Next, we establish an analogous result for the temperature $\thet$. 

First, we show that $\sqrt{\vartheta} \in \mathcal{D}_{\textrm{weak}}(0,T; L^2(\Omega))$ for all $T>0$. Fix $T>0$. Thanks to \eqref{eq:lnl1}, we already know that $\sqrt{\vartheta}\in L^{\infty}(0,T; L^2(\Omega))$. 
Therefore, to prove the claim, it is sufficient to establish that 
$\partial_t \sqrt{\vartheta} \in \mathcal{M}(0,T; (W^{2,3}_0(\Omega))^*)$. 

To this end, we derive the equation for $\sqrt{\vartheta^n}$. 
We set $\zeta:=\frac{\varphi}{2\sqrt{\vartheta^n}}$ in \eqref{ode22}, 
where \pk{$\varphi\in C^\infty_0(\Omega)$} is arbitrary, and obtain
\begin{equation}\label{ode222}
\dual{\partial_t\sqrt{\vartheta^{n}}}{\varphi} 
+ \intO{-\sqrt{\vartheta^{n}}\, \vv^{n} \cdot \nabla \varphi 
+ \kappa(\vartheta^n)\nabla \sqrt{\vartheta^{n}}\cdot \nabla \varphi} 
= \intO{\frac{\S^{n}: \D\vv^{n}\varphi}{2\sqrt{\vartheta^n}} 
+\frac{\kappa(\vartheta^n)|\nabla\vartheta^{n}|^2\varphi}{4 (\vartheta^n)^{\frac32}}}.
\end{equation}

It follows from the uniform estimates \eqref{apr:vn}--\eqref{eq:re3} and \eqref{theinf1} that
$$
\int_0^T \big\|\partial_t \sqrt{\vartheta^n}\big\|_{(W^{2,3}_0(\Omega))^*}\,\mathrm{d}t \le C.
$$
Consequently (for a subsequence),
$$
\partial_t \sqrt{\vartheta^n} \rightharpoonup^* \partial_t \sqrt{\vartheta} 
\quad\text{weakly$^*$ in } \mathcal{M}(0,T;  (W^{2,3}_0(\Omega))^*).
$$
Hence, combined with \eqref{eq:lnl1}, we conclude that
\begin{equation}\label{cadlag}
\sqrt{\vartheta} \in \mathcal{D}_{\textrm{weak}}(0,T; L^2(\Omega)).
\end{equation}

Similarly, one reads from \eqref{entropy-weak} that for a subsequence
$$
\partial_t {\eta^n} \rightharpoonup^* \partial_t \eta 
\quad\text{weakly$^*$ in } \mathcal{M}(0,T;  (W^{2,3}_0(\Omega))^*).
$$

Next, assuming that $\varphi \geq 0$, we integrate \eqref{ode222} with respect to 
$t \in (0,\tau)$ with some $\tau>0$. Using the nonnegativity of the right-hand side. We obtain
$$
\intO{\sqrt{\vartheta^n(\tau)}\,\varphi}
- \inttO{\sqrt{\vartheta^n}\,\vv^n \cdot \nabla \varphi} 
+ \inttO{\kappa(\vartheta^n)\nabla\sqrt{\vartheta^n}\cdot \nabla \varphi}
\;\geq\;\intO{\sqrt{\vartheta^n_0}\,\varphi}.
$$
By virtue of \eqref{v-strong}--\eqref{thet-strong} and \eqref{conv:ic}, 
we can pass to the limit $n\to\infty$ in the above inequality and deduce that, 
for almost all $\tau \in (0,+\infty)$,
\begin{equation}\label{new1}
\intO{\sqrt{\vartheta(\tau)}\,\varphi}
- \inttO{\sqrt{\vartheta}\,\vv \cdot \nabla \varphi} 
+ \inttO{\kappa(\vartheta)\nabla\sqrt{\vartheta}\cdot \nabla \varphi}
\;\geq\;\intO{\sqrt{\vartheta_0}\,\varphi}.
\end{equation}
Thanks to \eqref{cadlag}, we know that 
\begin{equation}\label{weakonly}
\sqrt{\vartheta(\tau)} \rightharpoonup \sqrt{\vartheta(0)} 
\quad \text{weakly in } L^2(\Omega) \text{ as } \tau \to 0_+.
\end{equation}
Therefore, letting $\tau \to 0_+$ in \eqref{new1}, we deduce
\begin{equation}\label{eq:thet-liminf}
 \sqrt{\vartheta(0)} \geq \sqrt{\vartheta_0}
 \qquad \text{a.e. in } \Omega.
\end{equation}


Our goal now is to show that \eqref{eq:thet-liminf} holds with the equality sign and that the convergence result \eqref{weakonly} holds even in the strong topology. First, we show this claim only locally in $\Omega$. To do so, let $\varphi \in \mathcal{C}^1_0(\Omega)$ be arbitrary with $0 \le \varphi \le 1$. 
We set $\zeta := \varphi^2$ in \eqref{ode22} and integrate the result over $t \in (0,\tau)$ with some $\tau>0$. We add this result to \eqref{kinetic-eq}, which we divided by two and in which we set $T := \tau$, and obtain
\begin{equation*}\begin{split}
\intO{\thet^n(\tau)\varphi^2+\tfrac12|\vv^n(\tau)|^2}
&+\inttO{-\thet^n\vv^n\cdot \nabla\varphi^2+\kappa(\thet^n)\nabla\thet^n \cdot \nabla\varphi^2} \\
&\leq\intO{\thet_0^n\varphi^2+\tfrac12|\vv_0^n|^2}.
\end{split}\end{equation*}
Note that the dissipative heating term in the heat equation was cancelled with the corresponding term in the kinetic energy equality, and the positive remainder was dropped. This was possible due to the assumption $0 \le \varphi \le 1$ in $\Omega$.

As above, we can pass to the limit as $n \to +\infty$ for almost all $\tau \in (0,+\infty)$ to deduce 
\begin{equation}\label{local-st}
\begin{split}
\intO{\thet(\tau)\varphi^2+\tfrac12|\vv(\tau)|^2}
&+\inttO{-\thet\vv \cdot\nabla\varphi^2+\kappa(\thet)\nabla\thet \cdot \nabla\varphi^2}\\
&\leq\intO{\thet_0\varphi^2+\tfrac12|\vv_0|^2}.
\end{split}
\end{equation}
Next, we strenghten the above result so that it holds for all $\tau \in (0,+\infty)$. Intergrating the above inequality with respect to $\tau \in (\sigma, \sigma+\delta)$ with $\sigma>0$, dividing by $\delta$ and letting $\delta \to 0_+$, we see
\begin{equation*}\begin{split}
\liminf_{\delta \to 0} \frac{1}{\delta}\int_\sigma^{\sigma+\delta}\intO{|\sqrt{\thet(\tau)}\varphi|^2+\tfrac12|\vv(\tau)|^2}\; {\rm d}\, \tau
&+\int_0^\sigma\intO{-\thet\vv \cdot\nabla\varphi^2+\kappa(\thet)\nabla\thet \cdot \nabla\varphi^2}\dd t\\
&\leq\intO{\thet_0\varphi^2+\tfrac12|\vv_0|^2}
\end{split}
\end{equation*}
Since $\vv \in \mathcal{C}_{\textrm{weak}}(0,T; L^2_{0,\diver})$ and $\sqrt{\thet} \in \mathcal{D}_{\textrm{weak}}(0,T; L^2(\Omega))$ for any $T>0$, we have, as $\tau\to \sigma_+$,  
$$
\begin{aligned}
\vv(\tau) &\rightharpoonup \vv(\sigma) &&\textrm{weakly in } L^2_{0,\diver},\\
\sqrt{\thet(\tau)}\varphi &\rightharpoonup \sqrt{\thet(\sigma)}\varphi &&\textrm{weakly in } L^2(\Omega).
\end{aligned}
$$
Therefore, by a simple algebraic manipulation, we obtain
$$
\begin{aligned}
&\liminf_{\delta \to 0} \frac{1}{\delta}\int_\sigma^{\sigma+\delta}\intO{|\sqrt{\thet(\tau)}\varphi|^2+\tfrac12|\vv(\tau)|^2}\, {\rm d}\tau \\[0.3em]
&\quad \ge \liminf_{\delta \to 0} \frac{1}{\delta}\int_\sigma^{\sigma+\delta}\intO{-(|\sqrt{\thet(\sigma)}\varphi|^2+\tfrac12|\vv(\sigma)|^2)}\, {\rm d}\tau \\[0.3em]
&\qquad+\liminf_{\delta \to 0} \frac{2}{\delta}\int_\sigma^{\sigma+\delta}\intO{\sqrt{\thet(\sigma)}\varphi\sqrt{\thet(\tau)}\varphi+\tfrac12\vv(\sigma)\cdot \vv(\tau)}\, {\rm d}\tau\\[0.3em]
&\quad = \intO{|\sqrt{\thet(\sigma)}\varphi|^2+\tfrac12|\vv(\sigma)|^2} \\[0.3em]
&\qquad+\liminf_{\delta \to 0} \frac{2}{\delta}\int_\sigma^{\sigma+\delta}\intO{\sqrt{\thet(\sigma)}\varphi(\sqrt{\thet(\tau)}\varphi-\sqrt{\thet(\sigma)}\varphi) +\tfrac12\vv(\sigma)\cdot (\vv(\tau)-\vv(\sigma))}\, {\rm d}\tau\\[0.3em]
&\quad = \intO{|\sqrt{\thet(\sigma)}\varphi|^2+\tfrac12|\vv(\sigma)|^2}.
\end{aligned}
$$
Consequently, the inequality \eqref{local-st} holds for all $\tau \in (0,+\infty)$.  
Since we already know that $\lim_{\tau\to 0_+}\nm{\vv(\tau)-\vv_0}_2=0$, we can pass to the $\limsup$ and obtain
\begin{equation}\label{eq:thet-limsup}
  \limsup_{\tau\to 0_+}\intO{|\sqrt{\thet(\tau)}\varphi|^2}\leq\intO{|\sqrt{\thet_0}\varphi|^2}.
\end{equation}
Combining \eqref{eq:thet-liminf} with \eqref{eq:thet-limsup}, and recalling that $0\le \varphi\le 1$ was arbitrary, we conclude
$$
\begin{aligned}
\limsup_{\tau \to 0_+} \intO{|(\sqrt{\thet(\tau)} - \sqrt{\thet_0})\varphi |^2}
&=\lim_{\tau \to 0_+} \intO{\thet(\tau)\varphi^2 + \thet_0 \varphi^2 - 2\sqrt{\thet(\tau)}\varphi\sqrt{\thet_0}\varphi} \\[0.3em]
&\le 2\intO{\thet_0 \varphi^2 - \sqrt{\thet(0)}\varphi\sqrt{\thet_0}\varphi}\le 0,
\end{aligned}
$$

which directly implies that, for any compact $K \subset \Omega$,  
\begin{align}
&\sqrt{\thet(\tau)}\to\sqrt{\thet_0}\quad\mbox{ strongly in $L^2(K)$, \label{loc-s}}\\
&\sqrt{\thet(\tau)}\rightharpoonup\sqrt{\thet_0}\quad\mbox{ weakly in $L^2(\Omega)$}, \label{eq:thet-weak0}
\end{align}
as $\tau\to 0_+$.

Finally, we want to strengthen \eqref{loc-s} so that the strong convergence holds on the whole domain $\Omega$. First note that, thanks to \eqref{loc-s}--\eqref{eq:thet-weak0}, we have 
\begin{align}
&\sqrt{\thet(\tau)}\to\sqrt{\thet_0}\quad \textrm{ strongly in } L^q(\Omega)  \textrm{ for all } q\in [1,2). \label{glob-s}
\end{align}
For the next computation we recall the definition~\eqref{DFG}, which for $s,t>0$ reads
\begin{equation}
\mathcal{H}^\al(s):= \int_1^s (\G(z))^{-\al}\,{\rm d}z,\quad 
\G(s):=\int_0^s \kappa(z)\, {\rm d} z,\quad 
\ff_\alpha(s,t):=s-t - (\mathcal{H}^\al(s) - \mathcal{H}^\al(t))(\G(t))^{\al}.
\end{equation}
The role of the function $\G$ is to simplify the situation in the case of nonconstant heat conductivity. 
It will be even more important in the next section. 
Note that if $\kappa$ is constant, then the main part of $\mathcal{H}^\al(s)$ is just a multiple of the power function $s^{1-\alpha}$.

We set $\zeta:= 1-\bigl({\G(\thetah)/\G(\thet^n)}\bigr)^\alpha \chi_{(0,\tau)}$ in \eqref{ode22} for a fixed $\alpha>1/2$, 
integrate the result over $t\in (0,\tau)$, and add it to \eqref{kinetic-eq} divided by two (similarly as above). 
This leads to 
\begin{multline} 
    \intO{\tfrac12|\vv^n(\tau)|^2+\ff_\alpha(\thet^n(\tau),\thetah)}\\
    \leq \inttO{\nabla\thet^n \cdot \vv^n \left[\frac{\G(\thetah)}{\G(\thet^n)}\right]^\al}
      +\intO{\tfrac12|\vv_0^n|^2+\ff_\alpha(\thet_0^n,\thetah)}.
\end{multline}
We do not provide detailed computation here, since a similar one is carried out in Section~\ref{SectEntrop} in a more complicated situation. 
We only mention that some terms were dropped due to their positivity, and we also used the equation for $\thetah$ in~\eqref{MB6}.

Now, we pass to the limit as $n\to+\infty$. 
For the first term on the right-hand side we need the convergence
\[
    \frac{\nabla\thet^n}{\G(\thet^n)^\al}\rightharpoonup\frac{\nabla\thet}{\G(\thet)^\al} 
    \quad\mbox{in $L^2((0,T)\times\Omega)$,}
\]
which follows for $\alpha>1/2$, at least for a subsequence, from \eqref{thet-weak} and \eqref{thet-strong}. 
Then we proceed as before to obtain after the limit passage $n\to+\infty$ that for almost all $\tau \in (0,T)$
\begin{equation}\label{stop}
\begin{split} 
&\intO{\ff_\alpha(\thet(\tau),\thetah)-\ff_\alpha(\thet_0,\thetah)}\\
    &\qquad \leq \inttO{\nabla\thet \cdot \vv \left[\frac{\G(\thetah)}{\G(\thet)}\right]^\al}
      +\intO{\tfrac12|\vv_0|^2-\tfrac12|\vv(\tau)|^2}.
\end{split}
\end{equation}


Now we need to confirm that $\ff_\al$ suitably measures a distance and that it can be properly estimated from below. 
We claim that there exists $\alpha > \frac12$ such that for all $0 < s, t, r$ there holds
\begin{equation}\label{zaba}
\ff_\alpha(s^2,t)-\ff_\alpha(r^2,t)-2r\partial_1\ff_\alpha(r^2,t)(s-r) -(s-r)^2 \ge 0.
\end{equation}
Using this inequality in \eqref{stop} with the setting $s:=\sqrt{\thet(\tau)}$, $t:=\thetah$ and $r:=\sqrt{\thet_0}$, 
we obtain for almost all $\tau\in (0,+\infty)$
\begin{equation}\label{stop2}
\begin{split} 
&\intO{|\sqrt{\thet(\tau)} - \sqrt{\thet_0}|^2 }\\
    &\quad \leq \inttO{\nabla\thet \cdot \vv^n \left[\frac{\G(\thetah)}{\G(\thet)}\right]^\al}
      +\intO{\tfrac12|\vv_0|^2-\tfrac12|\vv(\tau)|^2} \\[0.5em]
      &\qquad - \intO{2 \sqrt{\thet_0}\left[1-\left (\frac{\G(\thetah)}{\G(\thet_0)} \right)^\alpha \right](\sqrt{\thet(\tau)}-\sqrt{\thet_0})}.
\end{split}
\end{equation}
Moreover, using convexity and the fact that $\sqrt{\thet}\in \mathcal{D}_{\textrm{weak}}(0,T; L^2(\Omega))$, 
the above inequality holds for every $\tau \in (0,+\infty)$. 
Finally, using the strong convergence $\vv(\tau) \to \vv_0$ and the weak convergence result \eqref{eq:thet-weak0}, 
we can let $\tau \to 0_+$ in the above inequality to conclude 
(note that $\frac{\G(\thetah)}{\G(\thet_0)}$ is bounded)
\begin{equation}\label{stop3}
\lim_{\tau \to 0_+}\intO{|\sqrt{\thet(\tau)} - \sqrt{\thet_0}|^2 }\le 0.
\end{equation}
The fact \eqref{eq:att-ic} then follows since $\sqrt\thet\in L^\infty(0,+\infty;L^2(\Omega))$ and 
\begin{multline*}
  \intO{|\thet(\tau)-\thet_0|}\leq\intO{|\sqrt{\thet(\tau)}-\sqrt{\thet_0}||\sqrt{\thet(\tau)}+\sqrt{\thet_0}|} \\
  \leq C(\nm{\sqrt\thet}_{L^\infty(0,+\infty;L^2(\Omega))}^2+\nm{\sqrt\thet_0}_{L^2(\Omega)}^2)\intO{|\sqrt{\thet(\tau)}-\sqrt{\thet_0}|^2}\to 0,
\end{multline*}
as $\tau \to 0_+$.

It remains to prove \eqref{zaba}. For fixed $t,r >0$ let us  define
$$
A(s):=\ff_\alpha(s^2,t)-\ff_\alpha(r^2,t)-2r\partial_1\ff_\alpha(r^2,t)(s-r) -(s-r)^2
$$
and evaluate its first two derivatives
$$
\begin{aligned}
A'(s)&=2s\partial_1\ff_\alpha(s^2,t)-2r\partial_1\ff_\alpha(r^2,t) -2(s-r),\\
A''(s)&=2\partial_1\ff_\alpha(s^2,t)+4s^2\partial^2_{1}\ff_\alpha(s^2,t)-2.
\end{aligned}
$$
It is clear that $A(r)=A'(r)=0$ and therefore to show the claim, we just need to show that $A''(s)\ge 0$ for all $s\ge \tmin$. Using the definition of $\ff_\alpha(s^2,t)$ we have
$$
\begin{aligned}
A''(s)&= -\frac{2 (\G(t))^{\al}}{(\G(s^2))^{\alpha}} + \frac{4s^2\alpha (\G(t))^{\al} \kappa(s^2)}{(\G(s^2))^{\alpha+1}}\\
 &=\frac{2 (\G(t))^{\al}}{(\G(s^2))^{\alpha}} \left(\frac{2s^2\alpha \kappa(s^2)}{(\G(s^2))}  -1 \right)\ge \frac{2 (\G(t))^{\al}}{(\G(s^2))^{\alpha}} \left(\frac{2\alpha \underline{\kappa}}{\overline{\kappa}}  -1 \right).
\end{aligned}
$$
Hence, setting e.g. $\alpha:=\frac{\overline{\kappa}}{2\underline{\kappa}}>\frac12$, we get the claim. The proof is complete.

%
%

\section{Stability of steady state}
  \label{sec:conv}

This section is devoted to the proof of Theorem~\ref{TMB}. 
We largely follow the approach of \cite{AbBuKa2024}, omitting details where the arguments are analogous. 
However, since we consider a completely new type of solution, namely \bws, 
and cannot rely on an energy equality, a more careful analysis is required. 
Accordingly, all genuinely new steps are presented in full detail, while standard arguments are treated more briefly.

Thus, in this section we assume that we are given a four-tuple $(\vv,\S,\thet,\eta)$ which constitutes a \bws\ in the sense of Definition~\ref{proper-sol}, corresponding to some $(-\alpha)$-\blafnc\ that satisfies all the requirements of Definition~\ref{def:blafnc} with $\alpha \in (0,1)$.

\subsection{Long time behaviour of the velocity field}\label{sec:vlt}
Next we show that the exponential decay of the velocity field follows from \eqref{eq:kinen1}. First we realize that from \eqref{eq:kinen1} and \eqref{eq:att-ic} we get that   for almost all $s,t\in(0,+\infty)$ fulfilling $s<t$ 
  \begin{equation}\label{eq:kinen}
\intO{|\vv(t)|^2} + 2\int_s^t\intO{\S:\D\vv} \leq \intO{|\vv(s)|^2}.
\end{equation}
It directly follows from the above inequality that
\begin{equation}\label{3.1.5}
  \vv \in L^{\infty}(0,+\infty; L^2_{0,\diver}),\quad \S:\D\vv\in L^1((0,+\infty)\times\Omega).
\end{equation}

If $p<2$, we estimate the second term on the left-hand side as
\begin{equation}\begin{split}
\intO{|\D\vv|^p} &= \intO{\left((\delta+|\D\vv|^2)^{\frac{p-2}{2}}|\D\vv|^2\right)^{\frac{p}{2}} (\delta+|\D\vv|^2)^{\frac{2-p}{2} \frac{p}{2}}} \\
&\leq C\left(\intO{\S:\D\vv}\right)^{\frac{p}{2}} \left(\intO{(\delta+|\D\vv|^2)^{\frac{p}{2}}}\right)^{\frac{2-p}{2}}.
\end{split}\end{equation}
Using also the Sobolev embedding $W^{1,p}\hookrightarrow L^2$, we obtain
\begin{equation}\label{eq:sdv}\begin{split}
C\min\left\{ \|\vv\|_{L^2(\Omega)}^2, \|\vv\|_{L^2(\Omega)}^p\right\}
&\leq C\min\left\{ \|\D\vv\|_{L^p(\Omega)}^2, \|\D\vv\|_{L^p(\Omega)}^p\right\} \\
&\leq \frac{\|\D\vv\|_{L^p(\Omega)}^2}{C + \|\D\vv\|_{L^p(\Omega)}^{2-p}}
\leq \intO{\S:\D\vv}.
\end{split}\end{equation}
Application of this inequality in \eqref{eq:kinen} yields the existence of $\mu>0$ such that for almost all $s,t\in(0,+\infty)$ with $s<t$,
\begin{equation}\label{in:bad}
\intO{|\vv(t)|^2} + \mu\int_s^t\min\left\{ \|\vv\|_{L^2(\Omega)}^2(\tau), \|\vv\|_{L^2(\Omega)}^p(\tau)\right\}\, \mathrm{d}\tau
\leq \intO{|\vv(s)|^2}.
\end{equation}

If $p\geq 2$, one can proceed similarly to get the existence of $\mu>0$ such that for almost all $s,t\in(0,+\infty)$ with $s<t$,
\begin{equation}\label{in:good}
\intO{|\vv(t)|^2} + \mu\int_s^t\|\vv\|_{L^2(\Omega)}^2(\tau) \mathrm{d}\tau
\leq \intO{|\vv(s)|^2}.
\end{equation}

We introduce the shorthand notation $f(\tau)=\|\vv(\tau)\|_{L^2(\Omega)}^2$. From \eqref{eq:kinen} it follows that $f(t)\leq f(s)$ for a.e. $s,t\in(0,+\infty)$ with $s<t$. Moreover, since $\vv \in \mathcal{C}_{\textrm{weak}}(0,T; L^2_{0,\diver})$ for all $T>0$, the function $f$ is lower semicontinuous.

We distinguish two cases: 
\begin{itemize}
\item[a)] $p<2$ and $\|\vv_0\|_2>1$, i.e., we have to deal with the inequality \eqref{in:bad},
\item[b)] the remaining cases, i.e., the inequality \eqref{in:good} holds.
\end{itemize}

In case~b), \eqref{in:bad} or \eqref{in:good} for $p\geq 2$ or $p<2$ respectively  implies for a.e. $s, t\in(0,+\infty)$ with $s<t$,
$$
f(t) + \mu\int_s^t f(\tau)\,\mathrm{d}\tau \leq f(s).
$$
An application of Lemma~\ref{lem:decay} then implies that for almost all $s,t\in(0,+\infty)$, $s<t$
\begin{equation}\label{est:st}
f(t)\le e^{-\mu(t-s)}f(s).
\end{equation}
Thanks to \eqref{eq:att-ic}, we may let $s\to 0$ and conclude that for almost all $t\in (0,\infty)$
$$
f(t)\le e^{-\mu t}\|\vv_0\|_2^2.
$$
Finally, since $f$ is lower semicontinuous, the above inequality extends to all $t\in (0,+\infty)$, which is precisely the desired result~\eqref{eq:twocrosses}.

In case~a) we define $\tstar:=\sup\{\tau;\ f>1 \ \text{a.e. in } (0,\tau)\}>0$. Positivity of $t^*$ follows from \eqref{eq:att-ic}. We observe from \eqref{in:bad} that for a.e. $s,t\in(0,t^*)$ with $s<t$
$$
\max(1,f(t) + \mu\int_s^t f(\tau)^{\frac p2}\,\mathrm{d}\tau) \leq f(s).
$$
Lemma~\ref{lem:decay} together with \eqref{eq:att-ic} gives $f(t)\leq \left(\frac{f(0)}{1+\mu t}\right)^{2/p}$ for a.e. $t\in(0,\tstar)$, which implies that $\tstar\leq (f(0)-1)/\mu$.
It follows from the definition of $\tstar$ and properties of $f$ that $f\le 1$ a.e. in $(\tstar,+\infty)$ so we can apply the previous step to deduce for a.e. $s,t\in(\tstar,+\infty)$ with $s<t$ that \eqref{est:st} holds.
Combining estimates on $(0,\tstar)$ and on $(\tstar,+\infty)$ with properties of $f$ we infer that for a.e. $t>0$,
$$
f(t)
\leq e^{-\mu t}\, e^{f(0)-1}\, f(0)^{\frac2p}.
$$
By lower semicontinuity of $f$ we get that the estimate holds actually for all $t>0$ which is exactly~\eqref{snad}.

\subsection{Further properties of \texorpdfstring{\bws}{b}}
In this part we re-establish the a~priori estimates for $\thet$ and $\eta$. 
This is necessary, since we aim to prove the result for an arbitrary \bws, whereas the definition of \bws\ imposes only limited regularity on $\thet$. 
On the other hand, the existence of a solution has already been proved, and the corresponding estimates were derived along the way. 
Here we carefully justify that the same bounds hold for any such \bws.

\subsubsection{Steklov average of the entropy inequality}
Since all estimates for $\thet$ are deduced from \eqref{entropy-limit}, we need to introduce its regularized version, which allows us to deal with the time derivative of the solutions. For any function $u:(0,\infty)\times \Omega \to \mathbb{R}$ and any $t, h>0$, $x\in\Omega$ we denote
$$
 u_h(t, x) = \fint_t^{t+h} u(\tau, x) \,{\rm d}\tau.
$$
Next, let $T>0$, $\varphi \in \mathcal{C}^1_0(\Omega)$ and $\psi \in \mathcal{C}_0^{\infty}((0,T))$ be arbitrary nonnegative functions.  
If we replace $\psi$ by $-\psi_{-h}$ in \eqref{entropy-limit} with sufficiently small $h$, 
then after applying Fubini’s theorem we deduce that
\begin{equation*}
\begin{split}
&\intTO{\dert\eta_h\,\psi \varphi} + \intTO{(\nabla\eta\cdot \vv)_h \, \varphi \,\psi}  + \intTO{(\kappa(\thet)\nabla\eta)_h \cdot\nabla \varphi \, \psi}
\\
& \geq \intTO{\left(\frac{1}{\thet}\,\S:\D{\vv}\right)_h\, \varphi \, \psi} + \intTO{\left(\kappa(\thet)|\nabla\eta|^2\right)_h\,\varphi\, \psi}.
\end{split}
\end{equation*}
Using density and weak$^*$ density arguments, we conclude that the above inequality can be generalized.  
Thus, for all nonnegative $\zeta \in L^\infty((0,T)\times\Omega)\cap L^2(0,T;W^{1,2}_0(\Omega))$ it holds that
\begin{equation}\label{entropy-limit2}
\begin{split}
&\intTO{\dert\eta_h\,\zeta} + \intTO{(\nabla\eta\cdot \vv)_h \, \zeta}  + \intTO{(\kappa(\thet)\nabla\eta)_h \cdot\nabla \zeta}
\\
& \geq \intTO{\left(\frac{1}{\thet}\,\S:\D{\vv}\right)_h\, \zeta} + \intTO{\left(\kappa(\thet)|\nabla\eta|^2\right)_h\,\zeta},
\end{split}
\end{equation}
which is the starting point for further estimates.

\subsubsection{Regularity of $\thet$ and $\vv$}\label{sss:regthetv} Since we are working with \bws, we have \eqref{MBap} at our disposal.  
Nevertheless, we establish more delicate entropy and temperature estimates in what follows.

For any $k\in \N$ and $s\in\R$, we define the standard cut-off function
$$
\mathcal{T}_k(s):={\rm sign}(s) \min\{|s|,k\}.
$$
Furthermore, we define its mollification as
$$
\mathcal{T}_{k,\varepsilon}(s):= \left\{ \begin{aligned}
&s &&\textrm{if }|s|\le k-\varepsilon,\\
&{\rm sign}(s) k &&\textrm{if } |s|\ge k+\varepsilon,\\
&{\rm sign}(s) \left( k - \frac{(k+\varepsilon - |s|)^2}{4\varepsilon} \right) &&\textrm{if }  k-\varepsilon < |s| < k+\varepsilon.
\end{aligned}
\right.
$$

Let $T>0$ be a Lebesgue point of $\eta\in L^\infty(0,+\infty; L^1(\Omega))$, and let $\psi=\psi(t)$ be a nonnegative function in $C^{0,1}([0,T])$ with $\psi(0)=\psi(T)=0$.
We choose a sequence $\{h_n\}$ such that $h_n>0$, $h_n\to 0$ as $n\to+\infty$ and denote $\eta_n=\eta_{h_n}$, $\thet_n=\thet_{h_n}$. The sequence can be chosen such that $\eta_n\to\eta$ a.e. in $(0,T)\times\Omega$ and there is an integrable majorant of $\{\thet_n\}$ on $(0,T)\times\Omega$. We denote it $\Theta$. Thanks to the Jensen inequality, we have  $\exp \eta_n\leq \thet_n$. 
We set $\zeta:=\psi\T_k(\exp(\etan)-\tmax)_+$ in~\eqref{entropy-limit2}, which is an admissible test function. Our goal is to let $n\to +\infty$ in all terms.  Note that $\nabla(\T_k(\exp(\etan)-\tmax)_+)=\nabla\etan\exp(\etan)\chi_{\{\exp(\etan)\in(\tmax,\tmax+k)\}}\in L^2((0,T)\times\Omega; \R^3)$ and 
\begin{equation}\label{eq:conv-stek}
  |\psi\T_k(\exp(\etan)-\tmax)_+|\leq k\nm{\psi}_\infty,\quad \psi\T_k(\exp(\etan)-\tmax)_+\to \psi\T_k(\thet-\tmax)_+\quad\mbox{a.e. in $\QT$.}
\end{equation}

We introduce  an auxiliary function  
$$
\Fk(s)= \int_{\tmax}^s \frac{\T_k(\sigma-\tmax)_+}{\sigma}\dd s
$$
to deal with the time derivative term. We  compute
$$
\intTO{\dert\etan\exp(\etan)\frac{\T_k(\exp(\etan)-\tmax)_+}{\exp(\etan)}\psi}=-\intTO{\Fk(\exp(\etan))\dert\psi}.
$$
Since $0\leq\Phi_k(s)\leq s$ for $s>0$, we have an integrable majorant for $\Phi_k(\exp(\etan))$, namely
$$
\Phi_k(\exp(\etan))\leq\Phi_k(\Theta)\leq\Theta,
$$
and we can identify the limit as $n\to+\infty$:
$$
-\intTO{\Fk(\exp(\etan))\dert\psi}\to-\intTO{\Fk(\thet)\dert\psi}.
$$

Similarly, we can also pass to the limit $n\to+\infty$ in convective term and all terms of the right hand side. We use \eqref{eq:conv-stek} and the fact that 
$$
\nabla\eta \vv, \qquad  \frac{\S:\D{\vv}}{\thet}, \qquad \kappa(\thet)\,|\nabla \eta|^2
$$
are integrable quantities over $\QT$ by \eqref{MBap}. Consequently, we may assume that (up to a subsequence) their Steklov averages converge to them almost everywhere in $\QT$, and that there exist $L^1(\QT)$-integrable majorants.

\begin{equation}\label{eq:lim-stek1}
\begin{gathered}  \intTO{\left(-\nabla\eta\cdot\vv+\frac{\S:\D{\vv}}{\thet}+\kappa(\thet)\,|\nabla \eta|^2\right)_{h_n}\psi\T_k(\exp(\etan)-\tmax)_+}\\
  \to
  \intTO{\left(-\nabla\eta\cdot\vv+\frac{\S:\D{\vv}}{\thet}+\kappa(\thet)\,|\nabla \eta|^2\right)\psi\T_k(\thet-\tmax)_+}.
\end{gathered}
\end{equation}
It remains to pass to the limit in the elliptic term. For this it is sufficient to realize that 
$$
\psi\T_k(\exp(\etan)-\tmax)_+ \to \psi\T_k(\thet-\tmax)_+ \quad \text{as } n\to+\infty \text{ in } L^2(\QT),
$$
and 
$$
\psi\nabla(\T_k(\exp(\etan)-\tmax)_+) \rightharpoonup \chi \quad \text{in } L^2(\QT) \text{ up to a subsequence}.
$$
Clearly, $\chi = \psi\nabla(\T_k(\thet-\tmax)_+)$, so
$$
\intTO{(\kappa(\thet)\nabla\eta)_{h_n}\cdot \psi\nabla(\T_k(\exp(\etan)-\tmax)_+)} \to
\intTO{\kappa(\thet)\nabla\eta\cdot \psi\nabla(\T_k(\thet-\tmax)_+)}.
$$
After the limit passage we rearrange some terms. The convective term disappears:
$$
\intTO{-\nabla\eta\cdot\vv\psi\T_k(\thet-\tmax)_+} =
\intTO{-\nabla\Psi(\thet)\cdot\vv\psi} = 0,
$$
since $\diver \vv=0$ in $\QT$. Note here, that $\Psi$ is a properly chosen function. 
Let us now combine two terms connected with the elliptic term
\begin{equation}
  \begin{aligned}
  &\intTO{\kappa(\thet)\,|\nabla \eta|^2\psi\T_k(\thet-\tmax)_+}-\intTO{\kappa(\thet)\nabla\eta\cdot\psi\nabla(\T_k(\thet-\tmax)_+)}=\\
  &\phantom{MMM}\intTO{\psi\kappa(\thet)\nabla \eta\cdot\left(\nabla\eta\T_k(\thet-\tmax)_+-\nabla(\T_k(\thet-\tmax)_+)\right)}=\\
  &\phantom{MMM}\intTO{\psi\kappa(\thet)|\nabla \eta|^2\left(\T_k(\thet-\tmax)_+-\thet\chi_{\{\thet-\tmax\in(0,k)\}}\right)}.\\
\end{aligned}
\end{equation}
Analyzing the large bracket above we see that it is equal to $0$ for $\thet\leq\tmax$, equal to $-\tmax$ if $\thet \in(\tmax, \tmax+k)$ and equal to $k$ for $\thet\geq \tmax+k$. Collecting the computation above and letting $\psi$ to $\chi_{(0,T)}$, we deduce that for almost all $T>0$
\begin{equation}\label{limit-k}
\intTO{k\kappa(\thet)|\nabla \eta|^2\chi_{\{\thet>\tmax+k\}}}\leq
\intTO{\tmax\kappa(\thet)|\nabla \eta|^2}+\intO{\Phi_k(\thet)(T)}\leq C,
\end{equation}
where the last inequality is a consequence of ~\eqref{MBap}. Note that the constant $C>0$ is independent of $k$. Moreover, it can be deduced from \eqref{MBap} and \eqref{limit-k} that, see \cite[Section 3.5, (3.20)]{AbBuKa2024},
\begin{equation}\label{eq:reg-grad-thet}
  \intTO{\frac{|\nabla \thet|^2}{\thet^{1+\varepsilon}}}\le C(T,\varepsilon)
\end{equation}
for all $\varepsilon >0$. From this inequality together with \eqref{MBap} one gets for all $T$, see \cite[Appendix~B.5]{BuFeMa2009},
\begin{equation}\label{eq:reg-thet1}
  \thet\in L^{r}(Q_T),\quad\nabla\thet\in L^q(Q_T)\quad\mbox{for $r\in[1,\frac53)$, $q\in[1,\frac54)$.}
\end{equation}
The last property needed for the stability is the interpolation for $\vv$, and from \eqref{MBap} one obtains
\begin{equation}\label{eq:reg-v1}
  \vv \in L^{\frac{5p}{3}}(Q_T).
\end{equation}
We also recall that, directly from the definition of \bws, it follows
%
\begin{equation}\label{ap-theta2}
\begin{split}
\thet(t,x)\ge \tmin.
\end{split}
\end{equation}
This is very important since it gives that $\thet^a$ is bounded in $Q$ for any $a\leq0$.

Finally, we show that for all $k\in \mathbb{N}$ and all $\varepsilon\in (0,1)$
\begin{equation}
\label{rentime}
\partial_t \mathcal{T}_{k,\varepsilon}(\thet) \in \mathcal{M}(0,T; (W^{2,3}_0(\Omega))^*).
\end{equation}
We do not provide the complete rigorous proof, since it essentially follows the previous steps. Hence, we set $\zeta := \exp(\eta_h) \, \mathcal{T}_{k,\varepsilon}'(\exp(\eta_h)) \, \psi$ in~\eqref{entropy-limit2}, where $\psi \ge 0$ is a smooth compactly supported function. Doing so, we have
\begin{equation*}
\begin{split}
&\intTO{\dert \mathcal{T}_{k,\varepsilon}(\exp(\eta_h)) \, \psi} + \intTO{(\nabla\eta \cdot \vv)_h \, \zeta} + \intTO{(\kappa(\thet) \nabla\eta)_h \cdot \nabla \zeta} \\
& \geq \intTO{\left(\frac{1}{\thet}\, \S : \D{\vv}\right)_h \, \zeta} + \intTO{\left(\kappa(\thet) |\nabla\eta|^2 \right)_h \, \zeta}.
\end{split}
\end{equation*}
Letting $h \to 0$ with help of integration by parts in the first two integrals, we deduce (using the above estimates \eqref{eq:reg-grad-thet}--\eqref{eq:reg-v1})
\begin{equation}\label{entropy-limit2T}
\begin{split}
&-\intTO{\mathcal{T}_{k,\varepsilon}(\thet) \, \dert \psi} - \intTO{\mathcal{T}_{k,\varepsilon}(\thet) \, \nabla \psi \cdot \vv} \\
& + \intTO{\kappa(\thet) |\nabla\thet|^2 \, \mathcal{T}''_{k,\varepsilon}(\thet) \, \psi} + \intTO{\kappa(\thet) \nabla \mathcal{T}_{k,\varepsilon}(\thet) \cdot \nabla \psi} \\
& - \intTO{\S : \D{\vv} \, \mathcal{T}'_{k,\varepsilon}(\thet) \, \psi} \geq 0.
\end{split}
\end{equation}
We see that the above expression defines a nonnegative distribution on $Q$. Consequently, there exists a measure $\mu \in \mathcal{M}(Q)$ such that, for all $\psi \in \mathcal{C}^1_0(Q)$, there holds (see \cite[Theorem 9.13]{Leoni09})
\begin{equation}\label{entropy-limit2TE}
\begin{split}
&-\intTO{\mathcal{T}_{k,\varepsilon}(\thet) \, \dert \psi} - \intTO{\mathcal{T}_{k,\varepsilon}(\thet) \, \nabla \psi \cdot \vv} \\
& + \intTO{\kappa(\thet) |\nabla\thet|^2 \, \mathcal{T}''_{k,\varepsilon}(\thet) \, \psi} + \intTO{\kappa(\thet) \nabla \mathcal{T}_{k,\varepsilon}(\thet) \cdot \nabla \psi} \\
& - \intTO{\S : \D{\vv} \, \mathcal{T}'_{k,\varepsilon}(\thet) \, \psi} = \langle \mu, \psi \rangle.
\end{split}
\end{equation}
From the classical Sobolev embedding and the above identity, we can thus deduce \eqref{rentime}.

\subsubsection{Uniform $L^\infty(0,+\infty;L^1(\Omega))$ estimate of temperature
}\label{sss:324}
Note that for a general \bws\ we know that it lies for any $T>0$ in $L^\infty(0,T;L^1(\Omega))$, however we do not know any qualitative estimate.  We need an estimate of $\thet$ in $L^\infty(0,+\infty;L^1(\Omega))$ that is uniform with respect to the initial and boundary conditions. We are going to show that it follows from \eqref{TE} and \eqref{entropy-limit}.

Since $\kappa$ is bounded from above and below, the function $\G$, defined in \eqref{DFG}, is strictly increasing and enjoys the following estimate
\begin{equation}\label{bound-G}
\underline{\kappa} s\leq \G(s) \leq \overline{\kappa} s  \qquad \textrm{ for all } s\ge 0.
\end{equation}
We define for any  $k\in \N$ and any $\alpha\in(0,1)$
$$
\mathcal{F}_k(s):=\int_1^s \frac{\mathcal{T}_k(z)}{z} \, {\rm d}z \ \mbox{ and } \ \mathcal{F}_k^\al(s):=\int_1^s \frac{\mathcal{T}_k(z)}{z} (\G(\mathcal{T}_k(z)))^{-\al}\,{\rm d}z.
$$
It directly follows from the definition that we have the following convergence results
\begin{equation}\label{Fmk}
\begin{aligned}
\mathcal{F}_k(s) &\to s-1 &&\mbox{ for any } s>0, \mbox{ as } k\to +\infty,\\
\mathcal{F}_k^\al(s) &\to \mathcal{H}^\al(s)&&\mbox{ for any } s>0, \mbox{ as } k\to +\infty,
\end{aligned}
\end{equation}
$\mathcal{H}^\al$ being defined in \eqref{DFG}.

Let $\varphi=\varphi(x)$ be a function in $\mathcal{C}^1(\overline\Omega)$ such that $\varphi\in[0,1]$ in $\Omega$ and $\varphi=1$ on $\partial \Omega$. Let $T>0$ be a Lebesgue point of the mappings $\thet,\eta\in L^\infty_{loc}([0,+\infty); L^1(\Omega))$ and $\vv\in L^\infty(0,+\infty;L^2(\Omega))$, and let $\psi=\psi(t)$ be a nonnegative function in $\mathcal{C}^{0,1}([0,T])$ with $\psi(0)=\psi(T)=0$. Let us choose the test function in \eqref{entropy-limit2} as
$$
\zeta:=(1-\varphi) \mathcal{T}_k(\exp{\eta_h})\psi.
$$
Note that such function belongs to $L^\infty((0,T)\times\Omega)\cap L^2(0,T;W^{1,2}_0(\Omega))$ and can be used as a test function in \eqref{entropy-limit2} to get
\begin{multline}\label{eq:pi}
  \intTO{-\cF_k(\exp{\eta_h}) (1-\varphi)\dert\psi}+\intTO{(\nabla\eta\cdot \vv)_h (1-\varphi) \mathcal{T}_k(\exp{\eta_h})\psi}\\  + \intTO{(\kappa(\thet)\nabla\eta)_h \cdot(-\nabla\varphi \mathcal{T}_k(\exp{\eta_h})+(1-\varphi) \chi_{\{\exp{\eta_h}<k\}}\exp{\eta_h}\nabla\eta_h)\psi}\\
  -\intTO{\left(\kappa(\thet)|\nabla\eta|^2\right)_h(1-\varphi) \mathcal{T}_k(\exp{\eta_h})\psi}
  \geq 0.
\end{multline}
We used the fact that the term $(\S:\D{\vv}/\thet)_h$ on the right hand side of \eqref{entropy-limit2} and the test function are positive. Now we choose a sequence $\{h_n\}$ such that $h_n>0$, $h_n\to 0$ as $n\to+\infty$ and denote $\eta_n=\eta_{h_n}$, $\thet_n=\thet_{h_n}$. The sequence can be chosen such that $\eta_n\to\eta$ a.e. in $\QT$ and there is an integrable majorant of $\{\thet_n\}$ on $\QT$. We call it $\Theta$. Thanks to the Jensen inequality, we have  $\exp \eta_n\leq \thet_n$. Consequently, $\cF_k(\exp{\eta_n})\to\cF_k(\thet)$ as $n\to+\infty$ a.e. in $(0,T)\times\Omega$ and $\cF_k(\exp{\eta_n})\leq \Theta$. Dominated Convergence Theorem allows us to pass to the limit as $n\to+\infty$ in the first term on the left hand side of \eqref{eq:pi}. Similarly we can treat also other terms in that inequality to get
\begin{equation*}
\begin{aligned}
  &\intTO{-\cF_k(\thet)(1-\varphi)\dert\psi}+\intnO{\nabla\eta\cdot \vv (1-\varphi) \mathcal{T}_k(\thet)\psi}\\  
  &\quad + \intTO{\kappa(\thet)\nabla\eta \cdot(-\nabla\varphi \mathcal{T}_k(\thet))\psi}+\intnO{\kappa(\thet)\nabla\eta \cdot (1-\varphi) \chi_{\{\thet<k\}}\thet\nabla\eta\psi}\\ 
  &\quad  -\intTO{\kappa(\thet)|\nabla\eta|^2(1-\varphi) \mathcal{T}_k(\thet)\psi}\geq 0.
\end{aligned}
\end{equation*}
Now, we combine the last two terms
$$
\intTO{\kappa(\thet)|\nabla\eta|^2(1-\varphi)\left( \chi_{\{\thet<k\}}\thet
-\mathcal{T}_k(\thet)\right)\psi}\leq 0.
$$
After some reformulation of the second and the third term we arrive to the inequality
$$
  \intTO{-\cF_k(\thet)(1-\varphi)\dert\psi+\cF_k(\thet)\vv\cdot \nabla\varphi\psi - \kappa(\thet)\nabla\thet \cdot\nabla\varphi \frac{\mathcal{T}_k(\thet)}\thet\psi}\geq 0.
  $$
  Finally, we set $\psi=\psi_n(t)=\max(0,\min(1,nT/2-n|t-T/2|))$ for any $n\in\N$ and pass to the limit as $n\to+\infty$, taking into account that $T$ is a Lebesgue point of $\thet\in L^\infty_{loc}([0,T);L^1(\Omega))$ and \eqref{eq:att-ic} we get
\begin{equation}\label{eq:ent-for-l1}
\begin{split}
  &\intO{(1-\varphi)\cF_k(\thet(T))}+\intTO{\cF_k(\thet)\vv\cdot \nabla\varphi - \kappa(\thet)\nabla\thet \cdot\nabla\varphi \frac{\mathcal{T}_k(\thet)}\thet}\\
  &\quad \geq \intO{(1-\varphi)\cF_k(\thet_0)}.
  \end{split}
\end{equation}

We used that $\cF_k'\leq 1$ on $(0,+\infty)$ and consequently
$$
n\int_0^{\frac1n}\intO{|\cF_k(\thet)-\cF_k(\thet_0)|(1-\varphi)}\dt\leq
n\int_0^{\frac1n}\intO{|\thet-\thet_0|(1-\varphi)}\dt\to 0
$$
as $n\to+\infty$. Analogously, it holds that
$$
n\int_{T-\frac1n}^T\intO{|\cF_k(\thet)-\cF_k(\thet(T))|(1-\varphi)}\dt\leq
n\int_{T-\frac1n}^T\intO{|\thet-\thet(T)|(1-\varphi)}\dt\to 0
$$
as $n\to+\infty$.

We realize that one can insert the same $\psi=\psi_n$ also to \eqref{TE} and pass to the limit $n\to+\infty$. Note that $B$ is Lipschitz continuous in the first variable.  Subtracting \eqref{eq:ent-for-l1} from the so obtained inequality yields
\begin{equation}\label{TE-for-l1}
\begin{array}{l}\displaystyle\vspace{6pt}
  \intO{\frac12{|\vv(T)|^2} + \thet(T) -\cF_k(\thet(T))+\left(\cF_k(\thet(T))- \B(\thet(T), \thetah)\right)\varphi}\\ \displaystyle\vspace{6pt}
  -  \intTO{\vv\cdot\nabla \varphi\cF_k(\thet)}-\intTO{\vv\cdot\nabla \thet\bb(\thet, \thetah)\varphi}
  \\ \displaystyle\vspace{6pt}
  + \intTO{\kappa(\thet)\nabla\thet\cdot \nabla \varphi \left(\frac{\mathcal{T}_k(\thet)}\thet -\bb(\thet, \thetah)\right)}
  \\ \displaystyle\vspace{6pt}
  -\intTO{\kappa(\thet)\nabla\thet\cdot\nabla\thetah \,\partial_{2} \bb(\thet, \thetah)\,\varphi}\\\displaystyle\vspace{6pt}
  \leq
  \intO{\left[\frac12{|\vv_0|^2} + \thet_0 -\cF_k(\thet_0)+\left(\cF_k(\thet_0)- \B(\thet_0, \thetah)\right)\varphi\right]}
\end{array}
\end{equation}
Let us consider $\varphi_n:\R^3\to\R$, $n\in\N$ a sequence of functions in $C^1(\R^3)$ such that
$\varphi_n\to 0$ as $n\to+\infty$ a.e. in $\Omega$ and for any $n\in\N$
\begin{equation*}\begin{split}
\varphi_n=1\mbox{ on }\partial\Omega,\quad 0\leq\varphi_n\leq1 \mbox{ a.e. in } \Omega,\quad \ {\rm supp }\ \varphi_n \subseteq \mathcal{U}(\partial\Omega, {1}/{n}) \mbox{ and } |\nabla \varphi_n(x)|\leq Cn
\end{split}\end{equation*}
where $\mathcal{U}(\partial\Omega, {1}/{n})$ denotes the open neighborhood of $\partial\Omega$ of radius ${1}/{n}$.
We want to pass to the limit as $n\to+\infty$ in \eqref{TE-for-l1}. The terms with $\varphi_n$ disappear in the limit since
$$
\cF_k(\thet(T))- \B(\thet(T), \thetah),\cF_k(\thet_0)- \B(\thet_0, \thetah)\in L^1(\Omega), \vv\cdot\nabla \thet \, b(\thet, \thetah), \kappa(\thet)\nabla\thet\cdot\nabla\thetah \,\partial_{2} \bb(\thet, \thetah)\in L^1(Q),
$$
compare the computation in Section~\ref{sec:lvteb}. Further goal is to show that also the terms with $\nabla\varphi_n$ disappear in the limit as $n\to+\infty$. The main tool for this is Hardy's inequality, see Theorem~\ref{thm:hardy}. The general strategy is to realize that $|\nabla \varphi_n(x)|\leq C/\dist(x,\partial\Omega)$ for $x\in\Omega$ and then use a fact that some of the appearing functions lie in some Sobolev space $W^{1,r}_0$ with $r\in(1,+\infty)$.
We start with the second integral on the left hand side of \eqref{TE-for-l1}. 
 Since $\vv \in L^p(0,T;W^{1,p}_0(\Omega))$ we need that $\cF_k(\thet)\in L^{p'}((0,T)\times\Omega)$. This follows from \eqref{eq:reg-thet1} because for fixed $k\in\N$ the function $\cF_k$ has logarithmic growth at infinity. We compute
\begin{multline}
\left|\intTO{\cF_k(\thet)(\vv  \cdot \nabla\varphi_n)\psi }\right|\\
\leq
C\left(\int_0^T\int_{\Omega_{1/n}}{\cF_k(\thet)^{p'}}\dx \dt\right)^{\frac1{p'}}\left(\int_0^T\int_{\Omega}|\vv|^p |\nabla\varphi_n|^p\dx\dt\right)^{\frac1p}\\
\leq
C\left(\int_0^T\int_{\Omega_{1/n}}{\cF_k(\thet)^{p'}}\dx \dt\right)^{\frac1{p'}}\left(\int_0^T\int_{\Omega} \left|\frac{\vv}{\dist(x,\partial\Omega)}\right|^p \dx\dt\right)^{\frac1p}\\
\leq C\left(\int_0^T\int_{\Omega_{1/n}}{\cF_k(\thet)^{p'}}\dx \dt\right)^{\frac1{p'}}\left(\int_0^T\int_{\Omega}|\nabla\vv|^p \dx\dt\right)^{\frac1p}
\end{multline}
where we denoted $\Omega_{1/n}=\{x\in\Omega;\dist(x,\partial\Omega)<1/n\}$. The last integral converges to $0$ as $n\to+\infty$ since $\cF_k(\thet)\in L^{p'}((0,T)\times\Omega)$ and $|(0,T)\times\Omega_{1/n}|\to 0$ as $n\to+\infty$.

In order to deal with the fourth integral on the left hand side of \eqref{TE-for-l1} we show that
\begin{equation}\label{reg:third-pre}
F:=\thet^\alpha\left[\frac{\mathcal{T}_k(\thet)}\thet -\bb(\thet, \thetah)\right]\in L^{2}(0,T;W^{1,2}_0(\Omega))
\end{equation}
for $k\geq\tmax$. Having this estimate, we can proceed exactly as above to show that this integral disappears in the limit $n\to+\infty$, since $\kappa(\thet)\nabla \thet/\thet^\alpha\in L^2((0,T)\times\Omega)$. Let us now prove \eqref{reg:third-pre}. The fact that $F$ has zero trace follows from \eqref{item0} and \eqref{MB6}. It remains to show $\nabla F\in L^2(0,T;L^2(\Omega))$. We compute
\begin{align*}
  \nabla F&=\left[(\alpha-1)\thet^{2\alpha-2}\cT_k(\thet)+\thet^{2\alpha-1}\chi_{\{\thet<k\}}-\alpha\thet^{2\alpha-1}\bb(\thet,\thetah)-\thet^{2\alpha}\partial_1b(\thet,\thetah)\right]\thet^{-\alpha}\nabla\thet\\
  &+\left[-\partial_2\bb(\thet,\thetah)\thet^\alpha\right]\nabla\thetah =:F_1\thet^{-\alpha}\nabla\thet+F_2\nabla\thetah.
\end{align*}
The functions $F_1$ and $F_2$ are bounded by \eqref{ap-theta2}, the fact that $\alpha<1$ and properties of the function $b$ listed in Definition~\ref{def:blafnc}, and $\thet^{-\alpha}\nabla\thet,\nabla\thetah\in L^2(Q)$ by \eqref{eq:reg-grad-thet} since $\alpha>1/2$. The statement \eqref{reg:third-pre} follows.
Altogether, setting $\varphi_n$ as a test function in \eqref{TE-for-l1} and passing $n\to+\infty$ we get
\begin{equation}\label{TE-for-l1-fin}
  \intO{\frac12{|\vv(T)|^2} + \thet(T)-1 -\cF_k(\thet(T))}
  \leq \intO{\frac12{|\vv_0|^2} + \thet_0-1 -\cF_k(\thet_0)}.
\end{equation}
Now, we realize that the non-negative function $\Phi(s)=s-1-\cF_k(s)$, $s>0$ satisfies estimates
\begin{equation}\label{est:ofF}
\begin{aligned}
  \Phi(s)\leq (s-k)_+\quad &\mbox{for $s>0$,}\qquad
  \frac s2\leq \Phi(s)+k\lg(2)&\mbox{for $s>2k$.}
\end{aligned}
\end{equation}
The second estimate follows from the facts that $(\Phi(s)-s/2)'\geq 0$ for $s\geq 2k$ and $\Phi(2k)-k=-k\lg(2)$.
Finally, we set $k=\tmax$ and use the estimates \eqref{est:ofF} and \eqref{TE-for-l1-fin} to get
\begin{multline*}
  \intO{\thet(T)}\leq \intO{\thet(T)\chi_{\thet\leq 2\tmax}}+\intO{\thet(T)\chi_{\thet>2\tmax}}
  \leq 2\tmax|\Omega|+\intO{{|\vv(T)|^2} + 2\Phi(\thet(T))+2\tmax\lg(2)}\\
  \leq C\tmax|\Omega|+\intO{{|\vv_0|^2} + 2\Phi(\thet_0)}
  \leq C\intO{\frac12|\vv_0|^2+\max(\tmax,\thet_0)},
\end{multline*}
which gives the desired uniform $L^\infty(0,+\infty;L^1(\Omega))$ estimate
\begin{equation}\label{est:lnl1}
\nm{\thet}_{L^\infty(0,+\infty;L^1(\Omega))}\leq C\intO{\frac12|\vv_0|^2+\max(\tmax,\thet_0)}.
\end{equation}

\subsection{Renormalization of the entropy inequality}\label{SectEntrop}
Further we need to derive an estimate that is suitable for proof of the stability of the steady state. We use a similar method to the one used to derive the estimate \eqref{est:lnl1}. 
Let $\varphi=\varphi(x)$ be again a function in $\mathcal{C}^1(\overline\Omega)$ such that $\varphi\in[0,1]$ in $\Omega$ and $\varphi=1$ on $\partial \Omega$, and let $\psi=\psi(t)$ be a function in $\mathcal{C}^\infty_0(0, +\infty)$, $\psi\geq 0$. Let $T>0$ be such that $\supp\psi\subset[0,T]$. We choose the test function in \eqref{entropy-limit2} as 
$$
\zeta:=(1-\varphi) \mathcal{T}_k(\exp{\eta_h})\, \left(\frac{\G(\thetah)}{\G(\mathcal{T}_k(\exp{\eta_h}))}\right)^\alpha\psi, \mbox{ with $\alpha\in(0,1)$.}
$$
We expand each resulting term and subsequently analyze the limits, first as $h \to 0_+$ and then as $k \to +\infty$, in order to derive a renormalized form of the entropy inequality.  
Throughout, we write $\int_0^{\infty}$, since $\psi$ is compactly supported in $(0,T)$.  
Using integration by parts, together with the compact support of $\psi$ and the fact that $\thetah$ is independent of time, the time-derivative term in \eqref{entropy-limit2} can be rewritten as
\begin{equation*}\begin{split}
&\intnO{\dert\eta_h \, (1-\varphi)\, \mathcal{T}_k(\exp{\eta_h})\,\left(\frac{\G(\thetah)}{\G(\mathcal{T}_k(\exp{\eta_h}))}\right)^{\al} \psi } \\&= \intnO{\dert\left[\left( \mathcal{F}_k^\al(\exp{\eta_h}) - \varphi\mathcal{F}_k^\al(\exp{\eta_h})\right)(\G(\thetah))^{\al}\right]\psi}\\
&= -\intnO{\left[\left( \mathcal{F}_k^\al(\exp{\eta_h}) - \varphi\mathcal{F}_k^\al(\exp{\eta_h})\right)(\G(\thetah))^{\al}\right]\dert\psi}.
\end{split}\end{equation*}
 Thanks to the Jensen inequality, we have  $\exp \eta_h\leq \thet_h$. As $\thet\in L^{1}(Q_T)$ then $\thet_h \to \thet$ in $L^1(Q_T)$ and consequently also $\exp \eta_h \to \exp \eta =\thet$ in $L^1(Q_T)$. As
\begin{equation}\label{est:forleb}
  |\mathcal{F}_k^{\al}(s)| \le C(1+s)\quad\mbox{for $s>0$,}
\end{equation}
we can use the Dominated Convergence Theorem to deduce that (recall also that $\thetah$ is bounded)
\begin{equation*}
\begin{split}
\lim_{h\to 0_+} &\left[ -\intnO{\left[\left( \mathcal{F}_k^\al(\exp{\eta_h}) - \varphi\mathcal{F}_k^\al(\exp{\eta_h})\right)(\G(\thetah))^{\al}\right]\dert\psi}\right]\\
& = - \intnO{\left[\left( \mathcal{F}_k^\al(\thet) - \varphi\mathcal{F}_k^\al(\thet)\right)(\G(\thetah))^{\al}\right]\dert\psi}.
\end{split}
\end{equation*}
As \eqref{est:forleb} is actually for fixed $\alpha$ uniform in $k\in\mathbb{N}$ we can use \eqref{Fmk} and  
again the Dominated Convergence Theorem to get
\begin{equation*}\begin{split}
&\lim_{k\to +\infty} \left[-\intnO{\left[\left( \mathcal{F}_k^\al(\thet) - \varphi\mathcal{F}_k^\al(\thet)\right)(\G(\thetah))^{\al}\right]\dert\psi}\right]\\
& = - \intnO{[\mathcal{H}^\al(\thet) - \varphi\,\mathcal{H}^\al(\thet)](\G(\thetah))^{\al}\, \dert\psi}.
\end{split}\end{equation*}
Next, we focus on the term with $\S:\D \vv$. First, we consider the limit $h\to 0_+$ of
$$
\intnO{\left(\frac{1}{\thet}\,\S:\D{\vv}\right)_h\, \mathcal{T}_k(\exp{\eta_h}) \left(\frac{\G(\thetah)}{\G(\mathcal{T}_k(\exp{\eta_h}))}\right)^{\al}(1- \varphi)\,\psi}.
$$
Repeating the arguments for $\exp \eta_h$ already performed, also using that $\frac{1}{\thet}\,\S:\D{\vv}\in L^1((0,+\infty)\times\Omega)$, see \eqref{3.1.5} and \eqref{ap-theta2}, we are in position to employ the Dominated Convergence Theorem and take the limit under the integral sign, then
\begin{equation}\label{SD}\begin{split}
 &\intnO{\frac{1}{\thet}\,\S:\D{\vv}\, \mathcal{T}_k(\thet)\left(\frac{\G(\thetah)}{\G(\mathcal{T}_k(\thet))}\right)^{\al} (1- \varphi)\,\psi}
\\& =\intnO{\chi_{\{\thet\leq k\}}\, \S:\D{\vv}\, \left(\frac{\G(\thetah)}{\G(\thet)}\right)^{\al} (1- \varphi)\,\psi}\\&  + \intnO{\chi_{\{\thet> k\}}\,\frac{k}{\thet}\,\S:\D{\vv}\, \left(\frac{\G(\thetah)}{\G(k)}\right)^{\al} (1- \varphi)\,\psi}.
\end{split} \end{equation}
Using that $\|\thetah\|_\infty\leq k$ for $k$ sufficiently large, the last integral converges to zero when $k\to +\infty$. Thus taking the limit as $k\to +\infty$ in \eqref{SD} we get
$$
\intnO{  \S:\D{\vv}\,  \left(\frac{\G(\thetah)}{\G(\thet)}\right)^{\al} (1- \varphi)\,\psi}.
$$
Now we focus on the terms
\begin{equation*}
\begin{split}
&\intnO{(\kappa(\thet)\nabla\eta)_h \cdot\nabla \left[ \mathcal{T}_k(\exp{\eta_h})(1-\varphi) \left(\frac{\G(\thetah)}{\G(\mathcal{T}_k(\exp{\eta_h}))}\right)^{\al} \right]\psi } \\
&- \intnO{\left(\kappa(\thet)\frac{|\nabla\thet|^2}{\thet^2}\right)_h\, \mathcal{T}_k(\exp{\eta_h})(1- \varphi) \left(\frac{\G(\thetah)}{\G(\mathcal{T}_k(\exp{\eta_h}))}\right)^{\al} \psi}.
\end{split}
\end{equation*}
Since $\kappa(\thet)$ is bounded, $\nabla\eta\in L^2((0, T)\times\Omega)$, $\mathcal{T}_k(\exp{\eta_h})$ is uniformly bounded in $h>0$ and $\eta_h$ converges almost everywhere to $\eta$, we can take the limit as $h\to 0_+$ under the integral sign. After some manipulations we get
\begin{equation*}
\begin{split}
&\intnO{\nabla \G(\thet)\cdot \left[\frac{\nabla (\mathcal{T}_k(\thet))}{\thet} -\frac{\mathcal{T}_k(\thet)\nabla\thet}{\thet^2}\right] (1-\varphi) \left(\frac{\G(\thetah)}{\G(\mathcal{T}_k(\thet))}\right)^{\al} \psi} \\
&+\al\intnO{\nabla \G(\thet) \cdot \nabla\left[ \frac{\G(\thetah)}{\G(\mathcal{T}_k(\thet))}\right]\left( \frac{\G(\thetah)}{\G(\mathcal{T}_k(\thet))}\right)^{\al-1}\frac{\mathcal{T}_k(\thet)}{\thet}\,(1-\varphi)\,\psi}\\
&-\intnO{\nabla \G(\thet)\cdot \nabla\varphi  \left(\frac{\G(\thetah)}{\G(\mathcal{T}_k(\thet))}\right)^{\al}\frac{\mathcal{T}_k(\thet)}\thet \psi} =:A_1+A_2+A_3.
\end{split}
\end{equation*}
Now employing \eqref{limit-k} we get that
$$
|A_1|\leq C\intTO{\chi_{\{\thet>k\}}k\frac{|\nabla\thet|^2}{\thet^2}\left(\frac{\G(\thetah)}{\G(k)}\right)^{\al}}\to 0\quad\mbox{as $k\to+\infty$.} 
$$
Next, $A_2$ can be split as
\begin{equation}\label{B}
\begin{split}
A_2&= \al \intnO{\chi_{\{\thet\leq k\}} \nabla \G(\thet) \cdot \nabla\left[ \frac{\G(\thetah)}{\G(\mathcal{T}_k(\thet))}\right]\left( \frac{\G(\thetah)}{\G(\mathcal{T}_k(\thet))}\right)^{\al-1}\,(1-\varphi)\,\psi} \\
 &+\al\intnO{\chi_{\{\thet>k\}}{\nabla \G(\thet) \cdot \nabla\left[ \frac{\G(\thetah)}{\G(k)}\right]\left( \frac{\G(\thetah)}{\G(k)}\right)^{\al-1}\frac{k}{\thet}\,\,(1-\varphi)\,\psi }}=:A_{21}+A_{22}.
\end{split}
\end{equation}
Let us observe that the second integral vanishes as $k\to +\infty$. Indeed by a simple manipulation and the H\"{o}lder inequality we get that
\begin{equation*}
|A_{22}|\leq Ck^{\frac12-\al}\left(\intTO{\chi_{\{\thet>k\}} \frac{k|\nabla\thet|^2}{\thet^{2}}}\right)^{\frac{1}{2}} \left( \intTO{\chi_{\{\thet>k\}}|\nabla\thetah|^2}\right)^{\frac{1}{2}}.
\end{equation*}
Recalling \eqref{limit-k}, $\alpha>1/2$ and the summability of $|\nabla\thetah|^2$ we get the convergence to zero as $k\to +\infty$.
It remains to discuss the first integral term in \eqref{B}, which can be reformulated onto the whole domain $(0, T)\times\Omega$ because $\nabla \G(\mathcal{T}_k(\thet))=0$ when $\thet\geq k$. Through some manipulations and the integration by parts one gets
\begin{equation}\label{ell}
\begin{split}
&A_{21}=-\al \intnO{ \nabla \G(\mathcal{T}_k(\thet)) \cdot \nabla\left[ \frac{\G(\mathcal{T}_k(\thet))}{\G(\thetah)}\right]\left( \frac{\G(\mathcal{T}_k(\thet))}{\G(\thetah)}\right)^{-1-\al}\,(1-\varphi)\,\psi}\\
&=-\al\intnO{ \G(\thetah) \,\nabla\left(\frac{\G(\mathcal{T}_k(\thet))}{\G(\thetah)}\right) \cdot \nabla\left( \frac{\G(\mathcal{T}_k(\thet))}{\G(\thetah)}\right)\left( \frac{\G(\mathcal{T}_k(\thet))}{\G(\thetah)}\right)^{-1-\al}\,(1-\varphi)\,\psi}\\
&\quad - \al \intnO{  \nabla \G(\thetah) \cdot \nabla\left( \frac{\G(\mathcal{T}_k(\thet))}{\G(\thetah)}\right)\left( \frac{\G(\mathcal{T}_k(\thet))}{\G(\thetah)}\right)^{-\al}\,(1-\varphi)\,\psi}\\
&=-\frac{4\al}{(1-\al)^2}\intnO{ \G(\thetah) \left|\nabla\left[\left(\frac{\G(\mathcal{T}_k(\thet))}{\G(\thetah)}\right)^{\frac{1-\al}{2}}\right]\right|^2 \,(1-\varphi)\,\psi}\\
&\quad - \frac{\al}{1-\al} \intnO{  \nabla \G(\thetah) \cdot \left(\left( \frac{\G(\mathcal{T}_k(\thet))}{\G(\thetah)}\right)^{1-\alpha}-1\right)\,\nabla\varphi\,\psi}\\
&\quad - \frac{\al}{1-\al} \intnO{  \nabla \G(\thetah) \cdot \nabla\left[\left(\left( \frac{\G(\mathcal{T}_k(\thet))}{\G(\thetah)}\right)^{1-\alpha}-1\right)\,(1-\varphi)\right]\,\psi}.
\end{split}
\end{equation}
Now, let us observe that the last integral vanishes because $\thetah$ satisfies \eqref{MB6} and for almost all $t\in (0,T)$ the function $(( \G(\mathcal{T}_k(\thet))/\G(\thetah))^{1-\alpha}-1)(1-\varphi)$ belongs to $W^{1,2}(\Omega)$ by \eqref{eq:reg-grad-thet}. It has zero trace provided that $k$ is sufficiently large.
%
Finally, we realize that thanks to the almost everywhere  pointwise convergence  of $\mathcal{T}_k(\thet)$ to $\thet$ and \eqref{eq:reg-grad-thet} we get
$$
\nabla\left[\left(\frac{\G(\mathcal{T}_k(\thet))}{\G(\thetah)}\right)^{\frac{1-\al}{2}}\right]\rightharpoonup
\nabla\left[\left(\frac{\G(\thet)}{\G(\thetah)}\right)^{\frac{1-\al}{2}}\right]\quad\mbox{in $L^2(\QT;\R^3)$.}
$$
Using this in the first integral on the right hand side of~\eqref{ell}  together with the nonnegativity of $1- \varphi$, and consequently of the whole integrand, we can employ weak lower semicontinuity of variational integrals and proceed as in Subsection~\ref{sec:leb} to get
\begin{equation*}
\begin{split}
&\frac{4\al}{(1-\al)^2}\intnO{ \G(\thetah) \left|\nabla\left[\left(\frac{\G(\thet)}{\G(\thetah)}\right)^{\frac{1-\al}{2}}\right]\right|^2 \,(1-\varphi)\,\psi}\\
&\quad \leq \liminf_{k\to +\infty}\frac{4\al}{(1-\al)^2}\intnO{ \G(\thetah) \left|\nabla\left[\left(\frac{\G(\mathcal{T}_k(\thet))}{\G(\thetah)}\right)^{\frac{1-\al}{2}}\right]\right|^2 \,(1-\varphi)\,\psi}.
\end{split}
\end{equation*}
Limit passage in the second term on the right hand side of \eqref{ell} is easy due to $\alpha>1/2$, \eqref{MBap} and the Vitali convergence theorem. 

The Vitali convergence  theorem and \eqref{eq:reg-grad-thet} also yield that $A_3$ converges to 
$$ -\intnO{\nabla \G(\thet)\cdot \nabla\varphi  \left(\frac{\G(\thetah)}{\G(\thet)}\right)^{\al} \psi}.$$

Altogether we have got that
\begin{equation}
\begin{split}
  \liminf_{k\to+\infty}\lim_{h\to 0_+}
   &\intnO{\left(\kappa(\thet)\frac{|\nabla\thet|^2}{\thet^2}\right)_h\, \mathcal{T}_k(\exp{\eta_h})(1- \varphi) \left(\frac{\G(\thetah)}{\G(\mathcal{T}_k(\exp{\eta_h}))}\right)^{\al} \psi}\\
   -&\intnO{(\kappa(\thet)\nabla\eta)_h \cdot\nabla \left[ \mathcal{T}_k(\exp{\eta_h})(1-\varphi) \left(\frac{\G(\thetah)}{\G(\mathcal{T}_k(\exp{\eta_h}))}\right)^{\al} \right]\psi } \\
   \geq
   &\frac{4\al}{(1-\al)^2}\intnO{ \G(\thetah) \left|\nabla\left[\left(\frac{\G(\thet)}{\G(\thetah)}\right)^{\frac{1-\al}{2}}\right]\right|^2 \,(1-\varphi)\,\psi}\\
   +&\frac{\al}{1-\al} \intnO{  \nabla \G(\thetah) \cdot \left(\left( \frac{\G(\thet)}{\G(\thetah)}\right)^{1-\alpha}-1\right)\,\nabla\varphi\,\psi}\\
   +&\intnO{\nabla \G(\thet)\cdot \nabla\varphi  \left(\frac{\G(\thetah)}{\G(\thet)}\right)^{\al} \psi}.
\end{split}
\end{equation}

Finally, we focus on the convective term
$$
\intnO{(\nabla\eta\cdot\vv)_h  \mathcal{T}_k(\exp{\eta_h})\left(\frac{\G(\thetah)}{\G(\mathcal{T}_k(\exp{\eta_h}))}\right)^{\al}(1-\varphi)\,\psi}.
$$
Since $\nabla\eta\cdot\vv\in L^q((0, T)\times\Omega)$ for some $q>1$ by \eqref{MBap}, \eqref{eq:reg-v1} and $p>6/5$, it holds that $(\nabla\eta\cdot\vv)_h$ converges to $\nabla\eta\cdot\vv$ in $L^q((0, T)\times\Omega)$ with $q>1$; moreover $\eta_h$ converges to $\eta$ in any $L^q$ with $q\geq 1$ and $ \mathcal{T}_k(\exp{\eta_h})$ is uniformly bounded in $h>0$. Thus, the Lebesgue dominated convergence theorem ensures that
\begin{equation*}
\begin{split}
&\lim_{h\to 0_+} \intnO{(\nabla\eta\cdot\vv)_h \, \mathcal{T}_k(\exp{\eta_h})\left(\frac{\G(\thetah)}{\G(\mathcal{T}_k(\exp{\eta_h}))}\right)^{\al}(1-\varphi)\,\psi}\\
&=  \intnO{\nabla\eta\cdot\vv \, \mathcal{T}_k(\thet)\left(\frac{\G(\thetah)}{\G(\mathcal{T}_k(\thet))}\right)^{\al}(1-\varphi)\,\psi}.
\end{split}
\end{equation*}
Next, we use the integration by parts and the facts that $\vv$ is divergence-free and $\chi_{\{\thet\leq k\}}\nabla\thet=\nabla\mathcal{T}_k(\thet)$ a.e. in $\QT$ to observe
\begin{equation}\label{ct}
\begin{split}
 \intnO{&\nabla\eta\cdot\vv \, \mathcal{T}_k(\thet)\left(\frac{\G(\thetah)}{\G(\mathcal{T}_k(\thet))}\right)^{\al}(1-\varphi)\,\psi }\\&=\intnO{\nabla \thet \cdot\vv \, \frac{\mathcal{T}_k(\thet)}{\thet} \left(\frac{\G(\thetah)}{\G(\mathcal{T}_k(\thet))}\right)^{\al} (1-\varphi)\,\psi }\\&=\intnO{\chi_{\{\thet>k\}}\nabla \thet \cdot\vv \,\frac{k }{\thet} \left(\frac{\G(\thetah)}{\G(k)}\right)^{\al} (1-\varphi)\,\psi } \\
& -\intnO{\mathcal{T}_k(\thet)\,\vv  \cdot \nabla\left[\left(\frac{\G(\mathcal{T}_k(\thet))}{\G(\thetah)}\right)^{-\al}\right] (1-\varphi)\,\psi } \\&+ \intnO{\mathcal{T}_k(\thet)\,\vv  \cdot \nabla\varphi\left(\frac{\G(\thetah)}{\G(\mathcal{T}_k(\thet))}\right)^{\al} \,\psi } .
\end{split}
\end{equation}
The last term converges by \eqref{eq:reg-v1} and \eqref{eq:reg-thet1} to
$$  \intnO{\thet\,\vv  \cdot \nabla\varphi\left(\frac{\G(\thetah)}{\G(\thet)}\right)^{\al} \,\psi } $$
provided $\alpha>(3-2p)/(3p)$.
For the first term, we use the assumption \eqref{limit-k}, the H\"{o}lder inequality and the estimate \eqref{bound-G} to deduce
\begin{equation*}
\begin{split}
&\left|\intnO{\chi_{\{\thet>k\}}\vv \cdot \frac{k \nabla \thet}{\thet} \left(\frac{\G(\thetah)}{\G(k)}\right)^{\al}  (1-\varphi)\,\psi }\right| \\
&\qquad \le Ck^{\frac12 - \alpha}\left(\intTO{\chi_{\{\thet>k\}}\frac{k |\nabla \thet|^2}{\thet^2}}
\right)^{\frac{1}{2}} \to 0 \quad \textrm{ as } k\to\infty,
\end{split}
\end{equation*}
since $\al>\frac12$. For the second term on the right hand side, we use the Young inequality, \eqref{bound-G} and properties of $\thetah$ to conclude
\begin{equation}\label{ct2}
\begin{split}
&\left|\intnO{\mathcal{T}_k(\thet)\,\vv  \cdot \nabla\left[\left(\frac{\G(\mathcal{T}_k(\thet))}{\G(\thetah)}\right)^{-\al}\right](1-\varphi)\,\psi}\right|\\
&\quad \le \frac{\alpha}{2}\intnO{\mathcal{G}(\thetah)\left(\frac{\G(\mathcal{T}_k(\thet))}{\G(\thetah)}\right)^{-1-\al}\left|\nabla\frac{\G(\mathcal{T}_k(\thet))}{\G(\thetah)}
\right|^2 (1-\varphi)\,\psi}\\
&\qquad +C\intnO{\mathcal{T}_k(\thet)^{1-\al}|\vv|^2 (1-\varphi)\,\psi}.
\end{split}
\end{equation}
We observe here that the first term on the right hand side will be absorbed by the first term on the right hand side of \eqref{ell} while we can pass to the limit as $k\to+\infty$ in the second term on the right hand side of \eqref{ct2} by Lebesgue Monotone Convergence Theorem. Finally we want to mention that the limit integral
$$
\intnO{{\thet^{1-\al}|\vv|^2(1-\varphi)\,\psi}}
$$
converges for $\alpha>(6-2p)/(3p)$ by \eqref{eq:reg-grad-thet} and \eqref{eq:reg-v1}. Note that for $p=6/5$ we get the condition $\alpha>1$ and for $p=3/2$ we get $\alpha>2/3$.

Collecting all the terms we get for $1>\alpha>\max(1/2,(6-2p)/3p)$
\begin{equation}\label{En-Final}
\begin{split}
& \intnO{[\mathcal{H}^\al(\thet) - \varphi\,\mathcal{H}^\al(\thet)]\G(\thetah)^{\al}\, \dert\psi}\\
&\qquad +\frac{2\al}{(1-\al)^2}\intnO{ \G(\thetah) \left|\nabla\left(\frac{\G(\thet)}{\G(\thetah)}\right)^{\frac{1-\al}{2}}\right|^2 \,(1-\varphi)\,\psi} \\
&\qquad + \intnO{  \S:\D{\vv}\,  \left(\frac{\G(\thetah)}{\G(\thet)}\right)^{\al} (1- \varphi)\,\psi}\\
&\leq
C\intnO{\thet^{1-\al}|\vv|^2 (1-\varphi)\,\psi} -\intnO{\nabla \G(\thet)\cdot \nabla\varphi  \left(\frac{\G(\thetah)}{\G(\thet)}\right)^{\al} \psi}\\
&\qquad 
-\frac{\alpha}{1-\alpha}\intnO{\nabla \G(\thetah)\cdot \nabla\varphi  \left[\left(\frac{\G(\thet)}{\G(\thetah)}\right)^{1-\al}-1\right] \psi} \\
&\qquad + \intnO{\thet\,\vv  \cdot \nabla\varphi\left(\frac{\G(\thetah)}{\G(\thet)}\right)^{\al} \,\psi } .
\end{split}\end{equation}

\subsection{Long time behaviour of the temperature}
Let us keep $\psi\in \mathcal{C}_0^\infty(0,+\infty)$ with $\supp\psi\in[0,T]$ for some $T>0$ and consider the same functions $\varphi_n:\R^3\to\R$, $n\in\N$ as in Section~\ref{sss:324}, i.e., a sequence of functions in $\mathcal{C}^1(\R^3)$ such that
$\varphi_n\to 0$ as $n\to+\infty$  a.e. in $\Omega$ and for any $n\in\N$
\begin{equation*}\begin{split}
\varphi_n=1\mbox{ on }\partial\Omega,\quad 0\leq\varphi_n\leq1 \mbox{ a.e. in } \Omega,\quad \ {\rm supp }\ \varphi_n \subseteq \mathcal{U}(\partial\Omega, \frac1n) \mbox{ and } |\nabla \varphi_n(x)|\leq Cn
\end{split}\end{equation*}
where $\mathcal{U}(\partial\Omega, {1}/{n})$ denotes an open neighborhood of $\partial\Omega$ of radius ${1}/{n}$.
Now, let us consider $\varphi=\varphi_n$ with a fixed $n\in\N$ in both \eqref{TE} and \eqref{En-Final} and let us add the resulting inequalities. We get, using the fact that $\intnO{\thet^{1-\al}|\vv|^2\varphi_n\psi}\geq 0$, 
\begin{equation}\label{dec-temp}
\begin{aligned}
     &\intnO{\left[\mathcal{H}^\al(\thet)\G(\thetah)^{\al}-\thet - \frac{|\vv|^2}{2} + \varphi_n(\B(\thet, \thetah)-\mathcal{H}^\al(\thet)(\G(\thetah))^{\al}) \right]\dert\psi} \\
     &\qquad + \frac{2\al}{(1-\al)^2}\intnO{ \G(\thetah) \left|\nabla\left(\frac{\G(\thet)}{\G(\thetah)}\right)^{\frac{1-\al}{2}}\right|^2 \,(1-\varphi_n)\,\psi}\\
&\qquad     + \intnO{  \S:\D{\vv}\,  \left(\frac{\G(\thetah)}{\G(\thet)}\right)^{\al} (1- \varphi_n)\,\psi}\\
&\leq C\intnO{\thet^{1-\al}|\vv|^2 \psi} +\intnO{\thet\left(\frac{\G(\thetah)}{\G(\thet)}\right)^{\al} (\vv  \cdot \nabla\varphi_n)\psi }\\
&\qquad +\intnO{\kappa(\thet)\left[\bb(\thet, \thetah)-\left(\frac{\G(\thetah)}{\G(\thet)}\right)^{\al} \right](\nabla\thet\cdot \nabla \varphi_n)\psi}\\
&\qquad 
-\frac{\alpha}{1-\alpha}\intnO{\nabla \G(\thetah)\cdot \nabla\varphi_n  \left[\left(\frac{\G(\thet)}{\G(\thetah)}\right)^{1-\al}-1\right] \psi}\\
&\qquad  + \intnO{\left(\kappa(\thet)\nabla\thet\cdot\nabla\thetah \,\partial_2 \bb(\thet, \thetah)+\vv\cdot\nabla \thet \,\bb(\thet, \thetah)\right)\varphi_n\, \psi }.
   \end{aligned}
 \end{equation}
We now pass to the limit as $n \to +\infty$. We begin with the second integral, where Fatou's lemma can be applied. After this limit passage, the factor $\varphi_n$ vanishes in this term. The remaining occurrences of $\varphi_n$, namely in parts of the first, third, and last integrals, also disappear in the limit by the Dominated Convergence Theorem, since $\varphi_n \in [0,1]$ and $\varphi_n \to 0$ as $n \to +\infty$, and the following quantities 
\begin{equation}
\begin{aligned}\label{eq:ljedna}
  &\B(\thet, \thetah), \qquad \mathcal{H}^\al(\thet)(\G(\thetah))^{\al},\qquad \S:\D{\vv}\,  \left(\frac{\G(\thetah)}{\G(\thet)}\right)^{\al},\\
  &\kappa(\thet)\nabla\thet\cdot\nabla\thetah \,\partial_2 \bb(\thet, \thetah),\qquad \vv\cdot\nabla \thet \, b(\thet, \thetah)
\end{aligned}
\end{equation}
belong to $L^1(\QT)$.
This  fact for all quantities in \eqref{eq:ljedna} follows from the regularity of the \bws\ and our assumptions on $\alpha$,
see Section~\ref{sec:lvteb} for similar computations.

Let us now consider the terms where $\nabla\varphi_n$ appears. i.e. the second, the third and the fourth integrals on the right hand side of \eqref{dec-temp}. The goal is to show that also these terms disappear in the limit as $n\to+\infty$. The main tool for this is Hardy's inequality, see Theorem~\ref{thm:hardy}. As in Section~\ref{sss:324} the general strategy is to realize that $|\nabla \varphi_n(x)|\leq C/\dist(x,\partial\Omega)$ and then use a fact that some of the appearing function lies in some Sobolev space $W^{1,r}_0$ with $r\in(1,+\infty)$. We start with the second term on the right hand side of \eqref{dec-temp}. 
Since $\vv \in L^p(0,T;W^{1,p}_0(\Omega))$ we need that $F:=\thet({\G(\thetah)}/{\G(\thet)})^{\al} \in L^{p'}(\QT)$. This follows from \eqref{eq:reg-thet1} provided $\al>1-5/(3p')=5/(3p)-2/3$. Then we can compute
\begin{equation}
\begin{aligned}
&\left|\intnO{\thet\left(\frac{\G(\thetah)}{\G(\thet)}\right)^{\al}(\vv  \cdot \nabla\varphi_n)\psi } \right|\\
&\qquad \leq
C\left(\int_0^T\int_{\Omega_{1/n}}{F^{p'}}\dx \dt\right)^{\frac1{p'}}\left(\int_0^T\int_{\Omega}|\vv|^p |\nabla\varphi_n|^p\dx\dt\right)^{\frac1p}\\
&\qquad \leq
C\left(\int_0^T\int_{\Omega_{1/n}}{F^{p'}}\dx \dt\right)^{\frac1{p'}}\left(\int_0^T\int_{\Omega}\left|\frac{\vv}{\dist(x,\partial\Omega)}\right|^p \dx\dt\right)^{\frac1p}\\
&\qquad \leq C\left(\int_0^T\int_{\Omega_{1/n}}{F^{p'}}\dx \dt\right)^{\frac1{p'}}\left(\int_0^T\int_{\Omega}|\nabla\vv|^p \dx\dt\right)^{\frac1p}
\end{aligned}
\end{equation}
where we denoted $\Omega_{1/n}=\{x\in\Omega;\dist(x,\partial\Omega)<1/n\}$. The right hand side term converges to $0$ as $n\to+\infty$ since $F\in L^{p'}((0,T)\times\Omega)$.

In order to deal with the third term on the right hand side we show that
\begin{equation}\label{reg:third}
F:=\thet^r\left[\bb(\thet, \thetah)-\left(\frac{\G(\thetah)}{\G(\thet)}\right)^{\al}\right]\in L^{2}(0,T;W^{1,2}_0(\Omega))
\end{equation}
for some $r>1/2$. Having this estimate we can proceed exactly as above to show that this integral disappears in the limit $n\to+\infty$ since $\kappa(\thet)\nabla \thet/\thet^r\in L^2((0,T)\times\Omega)$ for any $r>1/2$. Let us now prove \eqref{reg:third}. The fact that $F$ has zero trace follows from \eqref{item0}. It remains to show $\nabla F\in L^2(0,T;L^2(\Omega))$. We compute
\begin{align*}
  \nabla F&=\nabla\thet\left[r\thet^{r-1}\big(\bb(\thet,\thetah)-\left(\frac{\G(\thetah)}{\G(\thet)}\right)^{\al}\big)+\thet^r\big(\partial_1\bb(\thet,\thetah)+\left(\frac{\G(\thetah)}{\G(\thet)}\right)^{\al}\frac{\al\kappa(\thet)}{\G(\thet)}\big)\right]\\
  &+\nabla\thetah\left[\partial_2\bb(\thet,\thetah)-\left(\frac{\G(\thetah)}{\G(\thet)}\right)^{\al}\frac{\al\kappa(\thetah)}{\G(\thetah)}\right]\thet^r=:F_1\nabla\thet+F_2\nabla\thetah.
\end{align*}
The function $F_2$ satisfies the estimate $|F_2|\leq C\thet^{r-\alpha}$ by \eqref{item2}, \eqref{MB6} and \eqref{bound-G} so it suffices to choose $r\in(1/2,\alpha)$. We get that $F_2$ is bounded and since $\nabla\thetah\in L^2(\Omega)$, see \eqref{MB6}, we obtain $F_2\nabla\thetah\in L^2((0,T)\times\Omega)$. Similarly, $F_1\leq C\thet^{r-1-\alpha}$ by \eqref{item0}, \eqref{item3}, \eqref{MB6}, \eqref{MBBounds} and \eqref{bound-G}. If $r-1-\al<-1/2$ then $F_1\nabla\thet\in L^2((0,T)\times\Omega)$ by \eqref{eq:reg-grad-thet}. It suffices to choose $r\in(1/2, 1/2 +\alpha)$. 

The fourth term on the right hand side of \eqref{dec-temp} we rewrite as
$$
\intnO{\frac{\nabla \G(\thetah)}{\G(\thetah)^{1-\al}}\cdot \nabla\varphi_n  [\G(\thet)^{1-\al}-\G(\thetah)^{1-\al}] \psi}.
$$
Since $\nabla \G(\thetah)/\G(\thetah)^{1-\al}\in L^2(\Omega)$ by \eqref{MB6}, we need to show $F:=\G(\thet)^{1-\al}-\G(\thetah)^{1-\al}$ belongs to $ L^2(0,T;W^{1,2}_0(\Omega))$ to proceed as in the previous cases. Obviously $F=0$ on $(0,T)\times\partial \Omega$. Moreover, $\nabla[\G(\thet)^{1-\al}-\G(\thetah)^{1-\al}]=(1-\al)[\kappa(\thet)\nabla\thet\G(\thet)^{-\al}-\kappa(\thetah)\nabla\thetah\G(\thetah)^{-\al}]$, $\nabla\thet\G(\thet)^{-\al}\in L^2((0,T)\times\Omega)$ by \eqref{eq:reg-grad-thet} and $\nabla\thetah\G(\thetah)^{-\al}\in L^2((0,T)\times\Omega)$ by \eqref{MB6}. Consequently, we obtain the desired regularity and the fourth term also disappears in the limit as $n\to+\infty$.

Now, taking the limit as $n\to+\infty$ of \eqref{dec-temp} we get 
\begin{equation}\label{dec-temp-after}
\begin{array}{l}\displaystyle\vspace{6pt}
 - \intnO{[\frac{|\vv|^2}{2}+\thet -\thetah - (\mathcal{H}^\al(\thet) - \mathcal{H}^\al(\thetah))\G(\thetah)^{\al}]\, \dert\psi}
\\ \displaystyle\vspace{6pt}
+ \frac{2\al}{(1-\al)^2}\intnO{ \G(\thetah) \left|\nabla\left[\left(\frac{\G(\thet)}{\G(\thetah)}\right)^{\frac{1-\al}{2}}\right]\right|^2 \,\psi} 
\\ \displaystyle\vspace{6pt}
+  \intnO{  \S:\D{\vv}\,  \left(\frac{\G(\thetah)}{\G(\thet)}\right)^{\al} \,\psi}\leq C\intnO{\thet^{1-\al}|\vv|^2\psi}.
\end{array}
\end{equation}
Note that in the first term we added some terms that are independent of $t$ which is allowed since $\psi\in \mathcal{C}^{\infty}_0(0,T)$. The inequality \eqref{dec-temp-after} holds for all $\psi\in \mathcal{C}^\infty_0(0,+\infty)$. However, we want to emphasize that actually, we can allow even $\psi\in \mathcal{C}^{0,1}([0,+\infty))$ with $\psi(0)=0$ and compact support in $[0,+\infty)$, since every such $\psi$ can be approximated by a sequence $\{\psi_n\}\subset \mathcal{C}^\infty_0(0,+\infty)$ such that $\psi_n\to\psi$ and $\psi'_n\to\psi'$ a.e. in $(0,+\infty)$ as $n\to+\infty$, and $\nm{\psi_n}_{1,\infty}\leq2\nm{\psi}_{1,\infty}$.

Now, we recall the auxiliary function $f_\alpha:(0,+\infty)^2\to\R$ from \eqref{DFG} defined by
\begin{equation}\label{definice}
f_\alpha(s,t):=s-t - (\mathcal{H}^\al(s) - \mathcal{H}^\al(t))(\G(t))^{\al}
\end{equation}
for $\alpha\in(0,1)$ and $s,t\in(0,+\infty)$. 
It follows from \cite[Lemma~3.1, Lemma 3.2,(3.31)]{AbBuKa2024} and \eqref{est:lnl1} that for $\alpha\in(0,2/3]$
\begin{equation}\label{Holder}
\begin{split}
\intO{f_\alpha(\thet,\thetah)}\le C\intO{\left| \nabla \frac{\G(\thet)}{\G(\thetah)} \right|^2  \G(\thetah) \left(\frac{\G(\thet)}{\G(\thetah)}\right)^{-1-\al}},
\end{split}
\end{equation}
where the constant $C>0$ depends on $\tmin, \tmax, \nm{\vv_0}_2,\underline{\kappa}, \overline{\kappa},\alpha, \Omega$. The precise dependence of the constant $C$ may be derived from \cite[(3.32)]{AbBuKa2024}.
The inequality \eqref{dec-temp-after} turns into
\begin{equation}\label{dec-temp-after1}
\begin{array}{l}\displaystyle\vspace{6pt}
 - \intnO{\left[\frac{|\vv|^2}{2}+f_\alpha(\thet,\thetah)\right]\partial_t\psi}
\\ \displaystyle\vspace{6pt}
+ c\intnO{ f_{\alpha}(\thet,\thetah)\psi+\G(\thetah) \left|\nabla\left(\frac{\G(\thet)}{\G(\thetah)}\right)^{\frac{1-\al}{2}}\right|^2 \,\psi} 
\\ \displaystyle\vspace{6pt}
 +  \intnO{  \S:\D{\vv}\,  \left(\frac{\G(\thetah)}{\G(\thet)}\right)^{\al} \,\psi}\leq C\intnO{\thet^{1-\al}|\vv|^2 \,\psi}.
\end{array}
\end{equation}
It remains to estimate the last term on the right hand side in order to get the exponential decay of solutions. We split the estimate to the two cases: $p\geq2$ and $p<2$.

Let us start with $p\geq 2$. In this case we closely follow \cite[(3.26)]{AbBuKa2024}. We rewrite the estimate here for readers convenience. We use \eqref{bound-G}, \eqref{MB6}, H\"older's inequality, Sobolev Embedding Theorem, \eqref{MBBounds} and \eqref{eq:kinen1} to get 
\begin{multline}\label{eq:conv-term-fin}
\intnO{\thet^{1-\al}|\vv|^2\psi} \leq C\int_0^{+\infty}{\psi\nm{\vv}_6^2\nm{\thet}_{3(1-\alpha)/2}^{1-\al}}\\\leq C\int_0^{+\infty}{\psi\intO{\S:\D{\vv}}} \leq \frac {C+1}2\int_0^{+\infty}{\dert\psi\nm{\vv}_2^2}-\intT{\psi\intO{\S:\D{\vv}}}.
\end{multline}
We used that by Sobolev Embedding Theorem it holds for the case $p\geq 2$
\begin{equation}\label{eq:sobolev-poincare}
\nm{\vv}_2^2\leq C\nm{\vv}_6^2\leq C\intO{\S:\D{\vv}}.
\end{equation}
We also employed \eqref{est:lnl1} together with the fact that for $\alpha \geq1/3$ we have $3(1-\alpha)/2\leq1$.
Now, using again \eqref{eq:sobolev-poincare} we can find sufficiently large $\beta>0$ and small $\mu>0$ such that application of \eqref{eq:conv-term-fin} in \eqref{dec-temp-after1} gives
$$
-\intnO{\Lab(\vv,\thet,\thetah)\dert\psi}+\mu\intnO{\Lab(\vv,\thet,\thetah)\psi}\leq 0.
$$
Note that $\mu$ and $\beta$ depend on a solution $(\vv,\thet)$ in the manner prescibed in Theorem~\ref{TMB}. In particular, they depend on the solution $(\vv,\thet)$ only through norms of the initial data $(\vv_0,\thet_0)$.
Further we proceed as in Subsection~\ref{sss:324}. We take $n\in\N$, $\tau>0$ a Lebesgue point of the mappings $|\vv|^2,\thet\in L^\infty(0,+\infty;L^1(\Omega))$ and test the equation with $\psi=\exp(\mu t)\max(0,\min(1,n\tau/2-n|t-\tau/2|))$. For the limit passage with $n\to+\infty$ we use that $\partial_2\Lab(r,s,t)\leq 1+(\G(\tmax)/\G(\tmin))^\alpha$ for $s\geq\tmin$ and \eqref{eq:att-ic}. We get
\begin{equation}\label{hle}
\intO{\Lab(\vv(\tau),\thet(\tau),\thetah)\exp(\mu \tau)}\leq \intO{\Lab(\vv_0,\thet_0,\thetah)},
\end{equation}
which is \eqref{assymptotictwo} in the situation $\sigma=0$. General situation of \eqref{assymptotictwo} for a.e. $\sigma,\tau\in (0,+\infty)$, $\sigma<\tau$ is obtained similarly by considering $\sigma,\tau>0$ Lebesgue points of $|\vv|^2,\thet\in L^\infty(0,+\infty;L^1(\Omega))$ and 
$$
\psi(t)=\exp(\mu t)\max(0,\min(1,n[\tau-\sigma-|2t-\tau-\sigma|]/2)).
$$
It remains to show that the estimate \eqref{assymptotictwo} actually holds for a.e. $\sigma>0$ of $\sigma=0$ and all $\tau>\sigma$. This can be done using the scheme of the proof of Theorem~\ref{thm:ex}  and  we sketch the proof here. Since the function $\mathcal{T}_{k,\varepsilon}(\thet)$ is bounded and its time derivative satisfies \eqref{rentime}, we see that it also belongs to $\mathcal{D}_{\textrm{weak}}(0,T; L^2(\Omega))$ and, consequently, that
$$
\mathcal{T}_{k,\varepsilon}(\thet(t)) \rightharpoonup \mathcal{T}_{k,\varepsilon}(\thet(\tau)) \quad \text{weakly in } L^1(\Omega)
$$
for all $\tau \ge 0$ as $t \to \tau_+$. Then, using the convexity of $\Lab$, we deduce that for all $\tau > 0$ (here the importance is that it holds for all $\tau$),
\begin{equation}\label{compl}
\intO{\Lab(\vv(\tau), \mathcal{T}_{k,\varepsilon}(\thet(\tau)), \thetah)} \leq \liminf_{t \to \tau_+} \intO{\Lab(\vv(t), \mathcal{T}_{k,\varepsilon}(\thet(t)), \thetah)}.
\end{equation}
Furthermore, for $k > \overline{\thet} + 1$, we have
$$
\Lab(\vv, \mathcal{T}_{k,\varepsilon}(\thet), \thetah) \le \Lab(\vv, \thet, \thetah).
$$
Since we have already proved above that \eqref{assymptotictwo} holds for almost all $\tau, \sigma$ with $\tau > \sigma$, we also get for almost all $t, \sigma$ with $t > \sigma$ that 
\begin{equation}\label{assymptotictwoQ}
\intO{\Lab(\vv(t), \mathcal{T}_{k,\varepsilon}(\thet(t)), \thetah)} \leq e^{-\mu (t-\sigma)} \intO{\Lab(\vv(\sigma), \thet(\sigma), \thetah)}.
\end{equation}
Hence, using \eqref{compl}, we deduce that for almost all $\sigma$ and for all $\tau > \sigma$,
\begin{equation}\label{assymptotictwoQP}
\intO{\Lab(\vv(\tau), \mathcal{T}_{k,\varepsilon}(\thet(\tau)), \thetah)} \leq e^{-\mu (\tau-\sigma)} \intO{\Lab(\vv(\sigma), \thet(\sigma), \thetah)}.
\end{equation}
Finally, letting $k \to \infty$ in the above inequality, we deduce the validity of \eqref{assymptotictwo}.

In the case $p<2$ we need to proceed more carefully since we cannot use the estimate $\nm{\nabla\vv}_2^2\leq C\intO{\S:\D{\vv}}$. Instead, we have only \eqref{eq:sdv}. It means that if for some $t\in(0,T): \nm{\D\vv(t)}_p\leq1$ we have $\nm{\D\vv(t)}_{p}^2\leq C\intO{\S(t):\D{\vv(t)}}$, and if for some $t\in(0,T): \nm{\D\vv(t)}_p>1$ we get $\nm{\D\vv(t)}_{p}^p\leq C\intO{\S(t):\D{\vv(t)}}$. Now, we provide an estimate for each of the cases mentioned above. In the case $\nm{\D\vv(t)}_p\leq1$ we proceed similarly as if $p\geq 2$. We use the embedding $W^{1,p}(\Omega)\to L^{3p/(3-p)}(\Omega)$, Poincare's and Korn's inequalities and the fact that $3p(1-\alpha)/(5p-6)\leq 1$ if $\alpha\geq2(1/p-1/3)$
\begin{equation}\label{eq:conv-term-fin2}
\intO{\thet^{1-\al}|\vv|^2\psi}\leq C\psi\nm{\vv}_{{3p}/{(3-p)}}^2\nm{\thet}_{3p(1-\alpha)/(5p-6)}^{1-\al}\leq C\psi\intO{\S:\D{\vv}}.
\end{equation}
The case $\nm{\D\vv(t)}_p>1$ we start with an interpolation inequality. We find $q>1$ such that there is $C>0$ that for any $w\in W^{1,p}_0(\Omega)$
$$
\nm{w}_{q}^2\leq C\nm{w}_{2}^{2-p}\nm{w}_{1,p}^p, \quad\mbox{i.e. $q=\frac{12}{12-5p}$}.
$$
Now, we apply this inequality together with the fact that $6(1-\alpha)/(5p-6)\leq 1$ provided $\alpha\geq 2-5p/6$, to get for such $\alpha$'s
\begin{multline*}
\intO{\thet^{1-\al}|\vv|^2\psi}\leq C\psi\nm{\vv}_{12/{(12-5p)}}^2\nm{\thet^{1-\alpha}}_{6/(5p-6)}\\
\leq C\psi\nm{\vv}_{2}^{2-p}\nm{\vv}_{1,p}^p\nm{\thet^{1-\alpha}}_{6/(5p-6)}\leq C\psi\intO{\S:\D{\vv}}.
\end{multline*}
We use these estimates on different time levels $t\in(0,T)$ according to the size of the norm $\nm{\D\vv(t)}_p$. This is the reason why we need $\alpha\geq\max(2-5p/6,2(1/p-1/3))=2-5p/6$ for $p\geq6/5$. The other restrictions on $\alpha$ tell us that $\alpha\in(1/2,2/3]$, compare \eqref{Holder} and the estimate of $|A_{22}|$ below \eqref{B}. The set of eligible $\alpha$'s is nonempty if $p\geq 8/5$. Altogether, we get in this situation again
\begin{multline*}
  C\intnO{\thet^{1-\al}|\vv|^2\,\psi}\leq C\intnn{\psi\intO{\S:\D{\vv}}} \\
  \leq \frac {C+1}2\int_0^{+\infty}{\dert\psi\nm{\vv}_2^2}-\intT{\psi\intO{\S:\D{\vv}}}
\end{multline*}
which gives after plugging into \eqref{dec-temp-after1}
\begin{equation}\label{eq:final-ljap}
\begin{array}{l}\displaystyle\vspace{6pt}
 - \intnO{[\frac{|\vv|^2}{2}+f_\alpha(\thet,\thetah)]\partial_t\psi}
+ c\intnO{ (\S:\D{\vv}+f_{\alpha}(\thet,\thetah))\psi}\leq 0,
\end{array}
\end{equation}
where the constant $c>0$ depends on the solution only through $R>0$ in \eqref{assymptotictwo}.  
The final difficulty we encounter is again related to the estimate \eqref{eq:sdv}. To address this, we introduce
\[
  0 \leq \tstar := \inf\{ \tau \; ; \; \nm{\vv(\tau)}_{2} < 1 \},
\]
and estimate
$$
\begin{aligned}
\nm{\vv(t)}_2^2&\leq C\intO{\S:\D{\vv}}\quad&&\mbox{for $t\in(\tstar,+\infty)$,}\\
\nm{\vv(t)}_2^2&\leq {\nm{\vv(t)}_2^p}{\nm{\vv_0}}_2^{2-p} \leq C\nm{\vv_0}_2^{2-p}\intO{\S:\D{\vv}}\quad&&\mbox{for $t\in(0,\tstar)$.}
\end{aligned}
$$
Using this estimate in \eqref{eq:final-ljap} we conclude the proof as in the situation $p\geq2$. Theorem~\ref{TMB} is proved. \qed

\appendix
\section{Auxiliary tools}

\begin{lemma}[Decay estimates]\label{lem:decay} \ 
\begin{itemize}[left=0pt]
\item[1)]
Let $\tstar \geq 0$ and let $f : (\tstar, +\infty) \to [0, +\infty]$ be a measurable function such that, for almost all $s, t \in (\tstar, +\infty)$ with $s < t$, there holds
\begin{equation}\label{eq:0}
f(t) + C_1 \int_s^t f(\tau) \, \dtau \leq f(s).
\end{equation}
Then
$$
f(t) \leq f(s) \, e^{-C_1 (t-s)}
$$
for almost all $s, t > \tstar$ with $s < t$.

\item[2)] Let $\tstar > 0$, $\alpha \in (0,1)$, and let $f : (0, \tstar) \to [0, +\infty]$ be measurable such that, for almost all $s, t \in (0, \tstar)$ with $s < t$, there holds
\begin{equation}\label{eq:1}
\max\Bigl(1, f(t) + C_1 \int_s^t f(\tau)^\alpha \, \dtau \Bigr) \leq f(s).
\end{equation}
Then
$$
f(t) \leq \left(\frac{f(s)}{1 + C_1 (t-s)}\right)^{1/\alpha}
$$
for almost all $s, t \in (0, \tstar)$ with $s < t$.

\end{itemize}
\end{lemma}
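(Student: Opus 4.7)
My plan is to reduce both integral inequalities to tractable comparisons: in part~(1) a Gronwall-type ODE, and in part~(2) a purely algebraic inequality obtained after exploiting monotonicity. In both parts, the nonnegativity of $f$ (and of $f^\alpha$) inside the integral forces $f$ to be essentially non-increasing, which is the key structural observation I would use throughout.

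For part~(1), I would first invoke Fubini to fix an $s \in (\tstar,+\infty)$ from the full-measure set of points for which the hypothesis (\ref{eq:0}) holds for a.e. $t > s$. Define
\[
V(t) := \int_s^t f(\tau)\,\dtau,
\]
which is absolutely continuous on $[s,+\infty)$ with $V'(t) = f(t)$ a.e., since $f$ must lie in $L^1_{\rm loc}$ for the hypothesis to be meaningful. The inequality (\ref{eq:0}) rewrites as the differential inequality $V'(t) + C_1 V(t) \leq f(s)$ a.e. Multiplying by the integrating factor $e^{C_1 t}$, integrating from $s$ to $t$, and using $V(s) = 0$, I get
\[
V(t) \;\leq\; \frac{f(s)}{C_1}\bigl(1 - e^{-C_1(t-s)}\bigr).
\]
Substituting this back into (\ref{eq:0}) yields $f(t) \leq f(s) - C_1 V(t) \leq f(s)\, e^{-C_1(t-s)}$ for a.e. $t > s$, which is the claim.

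For part~(2), the hypothesis (\ref{eq:1}) immediately forces $f(s) \geq 1$ for a.e. $s$; moreover, since the integrand $f(\tau)^\alpha$ is nonnegative, (\ref{eq:1}) also gives $f(t) \leq f(s)$ for a.e. $s < t$. Hence $f$ admits a non-increasing representative (bounded below by $1$), and I would work with this representative. Monotonicity of $f^\alpha$ then gives the trivial lower bound
\[
\int_s^t f(\tau)^\alpha\,\dtau \;\geq\; (t-s)\, f(t)^\alpha,
\]
so that (\ref{eq:1}) reduces to $f(t) + C_1(t-s)\,f(t)^\alpha \leq f(s)$. Since $\alpha \in (0,1)$ and $f(t) \geq 1$, we have $f(t) \geq f(t)^\alpha$, and therefore
\[
f(t)^\alpha\bigl(1 + C_1(t-s)\bigr) \;\leq\; f(t) + C_1(t-s)\, f(t)^\alpha \;\leq\; f(s).
\]
Raising this to the power $1/\alpha$ gives exactly the claimed bound.

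I do not expect any serious obstacle; the only minor subtlety is measure-theoretic bookkeeping—ensuring the set of admissible $s$ for which the a.e. hypothesis holds for a.e. $t$ has full measure (handled by Fubini), and justifying the passage to a non-increasing representative in part~(2). Both are standard. The arguments are short precisely because the hypotheses are strong: in part~(1) the linear structure is tailor-made for an integrating factor, while in part~(2) the extra assumption $f \geq 1$ (smuggled in through the $\max$) is exactly what allows the trivial estimate $f(t)^\alpha \leq f(t)$ to close the argument without needing a delicate ODE comparison.
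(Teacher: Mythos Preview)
Your proposal is correct. For part~(2) your argument coincides with the paper's: both pass to a non-increasing representative with $f\ge1$, use $\int_s^t f^\alpha\ge(t-s)f(t)^\alpha$, and then $f(t)\ge f(t)^\alpha$ to conclude.

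For part~(1) your route is genuinely different. The paper also passes to a decreasing representative, but then obtains the one-step bound $f(\tau)\le f(\sigma)/\bigl(1+C_1(\tau-\sigma)\bigr)$ (via $\int_\sigma^\tau f\ge(\tau-\sigma)f(\tau)$), iterates it on an equipartition of $(s,t)$ into $n$ pieces to get $f(t)\le f(s)\bigl(1+C_1(t-s)/n\bigr)^{-n}$, and sends $n\to\infty$. Your Gronwall/integrating factor argument on $V(t)=\int_s^t f$ is more direct and avoids the discretization step; it also does not require first passing to a monotone representative. Conversely, the paper's iterative approach makes transparent the parallel with part~(2) (the same one-step bound appears), and sidesteps the small measure-theoretic caveat in your proof that $f\in L^1_{\mathrm{loc}}$ must be extracted from the hypothesis before $V$ is well defined.
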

\begin{proof}
  Ad 1)
  We redefine $f$ on a set of zero measure so that $f$ is decreasing on $(\tstar,+\infty)$. We denote the new function again $f$. It satisfies  \eqref{eq:0} for all $s,t>\tstar$, $s<t$. Since $f$ is decreasing we get from \eqref{eq:0} that 
  \begin{equation}\label{eq:2}
  f(\tau)\leq\frac{f(\sigma)}{1+C_1(\tau-\sigma)}
\end{equation}
for all $\sigma,\tau\in(\tstar,+\infty)$, $\sigma<\tau$.
Splitting interval $(s,t)$ to $n\in\N$ parts and applying \eqref{eq:2} iteratively gives
$$
  f(t)\leq f(s)\left(\frac1{1+C_1\frac{t-s}n}\right)^n.
$$
The claim follows by limit passage $n\to+\infty$ since we modified the function $f$ only on a set of zero measure.

Ad 2) As in 1) we redefine $f$ on a set of zero measure so that $f$ is decreasing and satisfies \eqref{eq:1} for all $s,t\in(0,\tstar)$. Since $f\geq 1$ on $(0,\tstar)$ we get for all $s,t\in (0,\tstar)$, $s<t$
  $$
  f(t)\leq \left(\frac{f(s)}{1+C_1(t-s)}\right)^{\frac1\alpha},
  $$
  which is the claim since $f$ was modified only on a zero measure set.  
\end{proof}

\begin{theorem}\label{thm:hardy}
  Let $1<p<+\infty$ and suppose that $\Omega\subset\R^d$ is a Lipschitz, open, bounded set. If $u\in W^{1,p}_0(\Omega)$, then there is a constant $C>0$, depending on $p$, $d$ and $\Omega$, such that
  $$
  \int_{\Omega}\left(\frac{|u(x)|}{\dist(x,\partial\Omega)}\right)^p\dx\leq
  C\int_{\Omega}|\nabla u(x)|^p\dx.
  $$
\end{theorem}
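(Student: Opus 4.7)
My plan is to establish the classical Hardy inequality by reducing to its one-dimensional version and patching with a partition of unity subordinated to a localization of $\partial\Omega$. First, by density of $\mathcal{C}^\infty_0(\Omega)$ in $W^{1,p}_0(\Omega)$ and by the obvious continuity of the right-hand side of the claimed inequality, it suffices to prove the estimate for $u\in\mathcal{C}^\infty_0(\Omega)$, with the constant $C$ independent of $u$. I will keep this reduction in mind and work with such smooth $u$ throughout.

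Next I would split the domain into an interior piece and a boundary strip. Fix $\delta>0$ small and set $\Omega^\delta:=\{x\in\Omega:\dist(x,\partial\Omega)>\delta\}$ together with the boundary layer $U:=\Omega\setminus\Omega^{\delta/2}$. On $\Omega^\delta$ the distance $\dist(x,\partial\Omega)$ is bounded from below by $\delta$, so the contribution to the Hardy integral from $\Omega^\delta$ is trivially controlled by $\delta^{-p}\nm{u}_{L^p(\Omega)}^p$, which is itself dominated by $C(\delta,\Omega,p)\nm{\nabla u}_{L^p(\Omega)}^p$ via the Poincaré inequality on $W^{1,p}_0(\Omega)$. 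Hence only the boundary strip $U$ requires genuine work.

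On $U$ I would use the Lipschitz regularity of $\partial\Omega$: by compactness, there is a finite open cover $\{V_j\}_{j=1}^N$ of $\partial\Omega$ and bi-Lipschitz maps $\Phi_j:V_j\to Q:=(-1,1)^{d-1}\times(-1,1)$ straightening the boundary, i.e. $\Phi_j(V_j\cap\Omega)=(-1,1)^{d-1}\times(0,1)$ and $\Phi_j(V_j\cap\partial\Omega)=(-1,1)^{d-1}\times\{0\}$. For $\delta$ small, $U$ is covered by $\bigcup_j V_j$, and a subordinate partition of unity $\{\chi_j\}$ reduces the estimate to bounding $\int_{V_j\cap\Omega}(|\chi_j u|/\dist(x,\partial\Omega))^p\dx$ for each $j$. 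Since $\Phi_j$ is bi-Lipschitz, $\dist(x,\partial\Omega)$ is comparable to the last coordinate $y_d$ of $y=\Phi_j(x)$, and the Jacobian of $\Phi_j$ is bounded above and below. Setting $v_j(y):=(\chi_j u)\circ \Phi_j^{-1}(y)$, the problem is reduced to establishing, for $v\in W^{1,p}$ of the half-cube vanishing on $\{y_d=0\}$,
\begin{equation*}
  \int_{(-1,1)^{d-1}}\!\!\int_0^1 \frac{|v(y',y_d)|^p}{y_d^p}\,dy_d\,dy'\le C\int_{(-1,1)^{d-1}}\!\!\int_0^1 |\partial_{y_d}v(y',y_d)|^p\,dy_d\,dy',
\end{equation*}
which follows by applying the classical one-dimensional Hardy inequality $\int_0^1|f(t)|^p/t^p\,dt\le (p/(p-1))^p\int_0^1|f'(t)|^p\,dt$ for absolutely continuous $f$ with $f(0)=0$, pointwise in $y'$, and then integrating over $y'$.

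The 1D Hardy inequality itself I would recall by writing $f(t)=\int_0^t f'(s)\,ds$ and applying Hölder or, more cleanly, integrating by parts with the weight $t^{1-p}$ and using the inequality $|f(t)/t|\le \fint_0^t|f'(s)|\,ds$ followed by Hardy–Littlewood's maximal-type estimate in 1D. The only mildly technical point is the use of the Lipschitz change of variables: one must check that the Jacobians and the derivatives are essentially bounded (which is exactly the definition of bi-Lipschitz) and that the chain rule applies for the $W^{1,p}$ function $u$, both of which are standard. This, together with summation over the finitely many charts and combination with the interior estimate, yields the claimed inequality with a constant $C$ depending only on $p$, $d$ and $\Omega$. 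The main obstacle is keeping track of the constants through the localization and the bi-Lipschitz change of coordinates, but since $N$ is finite and all distortions are bounded, this is only bookkeeping rather than a conceptual difficulty.
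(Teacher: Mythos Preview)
Your proposal is correct; the argument by localization near the boundary, bi-Lipschitz flattening, and reduction to the one-dimensional Hardy inequality is the standard and complete proof of this result for Lipschitz domains.

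The paper itself does not give a proof at all: it simply cites \cite[Corollary~3.11]{KiMa1997} for the result (indeed under weaker hypotheses on $\Omega$) and mentions \cite{HaLiPo1952} and \cite{Ne1962} for the origin of the inequality. So your approach is not so much different as it is more: you actually supply a self-contained argument where the paper defers to the literature. What your direct proof buys is transparency and independence from external references; what the citation buys is brevity and a statement valid for more general domains than Lipschitz (the Kinnunen--Martio result covers uniformly $p$-fat complements).
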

Theorem is proved in \cite[Corollary~3.11]{KiMa1997} under a more general conditions on the set $\Omega$. Origin of the Hardy inequality goes back to \cite{HaLiPo1952} and \cite{Ne1962}.

\section{Disclosure statement}
The authors report there are no competing interests to declare.

\bibliographystyle{amsplain}
\bibliography{bibliography}

@book {mnrr1996,
    AUTHOR = {M\'{a}lek, J. and Ne\v{c}as, J. and Rokyta, M. and R\r{u}\v{z}i\v{c}ka, M.},
     TITLE = {Weak and measure-valued solutions to evolutionary {PDE}s},
    SERIES = {Applied Mathematics and Mathematical Computation},
    VOLUME = {13},
 PUBLISHER = {Chapman \& Hall, London},
      YEAR = {1996},
     PAGES = {xii+317},
      ISBN = {0-412-57750-X},
   MRCLASS = {35-02 (35D05 35K60 35L65 35Q35)},
  MRNUMBER = {1409366},
MRREVIEWER = {Vladimir V. Shelukhin},
       DOI = {10.1007/978-1-4899-6824-1},
       URL = {https://doi.org/10.1007/978-1-4899-6824-1},
}

@book {temam79,
    AUTHOR = {Temam, R.},
     TITLE = {Navier-{S}tokes equations},
    SERIES = {Studies in Mathematics and its Applications},
    VOLUME = {2},
   EDITION = {Revised},
      NOTE = {Theory and numerical analysis,
              With an appendix by F. Thomasset},
 PUBLISHER = {North-Holland Publishing Co., Amsterdam-New York},
      YEAR = {1979},
     PAGES = {x+519},
      ISBN = {0-444-85307-3},
   MRCLASS = {35Q10 (49D99 65P05 76D05)},
  MRNUMBER = {603444},
}

@incollection {DieSwart2002,
    AUTHOR = {Diestel, J. and Swart, J.},
     TITLE = {The {R}iesz theorem},
 BOOKTITLE = {Handbook of measure theory, {V}ol. {I}, {II}},
     PAGES = {401--447},
 PUBLISHER = {North-Holland, Amsterdam},
      YEAR = {2002},
      ISBN = {0-444-50263-7},
   MRCLASS = {46-02 (28A33 46E15 46E27 46G10)},
  MRNUMBER = {1954619},
MRREVIEWER = {Narcisse\ Randrianantoanina},
       DOI = {10.1016/B978-044450263-6/50010-5},
       URL = {https://doi.org/10.1016/B978-044450263-6/50010-5},
}

@article {BeDieRu2010,
    AUTHOR = {Berselli, L.~C. and Diening, L. and R\r{u}\v{z}i\v{c}ka,
              Michael},
     TITLE = {Existence of strong solutions for incompressible fluids with
              shear dependent viscosities},
   JOURNAL = {J. Math. Fluid Mech.},
  FJOURNAL = {Journal of Mathematical Fluid Mechanics},
    VOLUME = {12},
      YEAR = {2010},
    NUMBER = {1},
     PAGES = {101--132},
      ISSN = {1422-6928,1422-6952},
   MRCLASS = {76A05 (35D35 35Q35)},
  MRNUMBER = {2602916},
MRREVIEWER = {Ilia\ S.\ Mogilevski\u i},
       DOI = {10.1007/s00021-008-0277-y},
       URL = {https://doi.org/10.1007/s00021-008-0277-y},
}

@article {FeNo2021,
    AUTHOR = {Feireisl, E. and Novotn\'y, A.},
     TITLE = {Navier--{S}tokes--{F}ourier system with general boundary
              conditions},
   JOURNAL = {Comm. Math. Phys.},
  FJOURNAL = {Communications in Mathematical Physics},
    VOLUME = 386,
      YEAR = 2021,
    NUMBER = 2,
     PAGES = {975--1010},
      ISSN = {0010-3616,1432-0916},
   MRCLASS = {35Q30},
  MRNUMBER = 4294284,
       DOI = {10.1007/s00220-021-04091-1},
       URL = {https://doi.org/10.1007/s00220-021-04091-1},
}

@book {FeNo2017,
    AUTHOR = {Feireisl, E. and Novotn\'y, A.},
     TITLE = {Singular limits in thermodynamics of viscous fluids},
    SERIES = {Advances in Mathematical Fluid Mechanics},
   EDITION = {Second},
 PUBLISHER = {Birkh\"auser/Springer, Cham},
      YEAR = {2017},
     PAGES = {xlii+524},
      ISBN = {978-3-319-63780-8; 978-3-319-63781-5},
   MRCLASS = {35-02 (35B25 35Q35 76-02 76N10)},
  MRNUMBER = {3729430},
}

@article {FeiMa2006,
    AUTHOR = {Feireisl, E. and M\'alek, J.},
     TITLE = {On the {N}avier--{S}tokes equations with temperature-dependent
              transport coefficients},
   JOURNAL = {Differ. Equ. Nonlinear Mech.},
  FJOURNAL = {Differential Equations and Nonlinear Mechanics},
      YEAR = {2006},
     PAGES = {Art. ID 90616, 14},
      ISSN = {1687-4099,1687-4102},
   MRCLASS = {35Q30 (35D05 76D03 76D05)},
  MRNUMBER = {2233755},
       DOI = {10.1155/denm/2006/90616},
       URL = {https://doi.org/10.1155/denm/2006/90616},
}

@book {FeNo2022-kniha,
    AUTHOR = {Feireisl, E. and Novotn\'y, A.},
     TITLE = {Mathematics of open fluid systems},
    SERIES = {Ne\v cas Center Series},
 PUBLISHER = {Birkh\"auser/Springer, Cham},
      YEAR = {2022},
     PAGES = {xxvii+284},
      ISBN = {978-3-030-94792-7; 978-3-030-94793-4},
   MRCLASS = {76-02},
  MRNUMBER = 4434626,
       DOI = {10.1007/978-3-030-94793-4},
}

@article {KaRuTha2000,
    AUTHOR = {Kagei, Y. and R\r{u}\v{z}i\v{c}ka, M. and Th\"ater, G.},
     TITLE = {Natural convection with dissipative heating},
   JOURNAL = {Comm. Math. Phys.},
  FJOURNAL = {Communications in Mathematical Physics},
    VOLUME = {214},
      YEAR = {2000},
    NUMBER = {2},
     PAGES = {287--313},
      ISSN = {0010-3616,1432-0916},
   MRCLASS = {76D03 (35Q35 76R10)},
  MRNUMBER = {1796023},
MRREVIEWER = {Piotr\ Bogus\l aw\ Mucha},
       DOI = {10.1007/s002200000275},
       URL = {https://doi.org/10.1007/s002200000275},
}

@article {AbFe2023,
    AUTHOR = {Abbatiello, A. and Feireisl, E.},
     TITLE = {The {O}berbeck--{B}oussinesq system with non-local boundary
              conditions},
   JOURNAL = {Quart. Appl. Math.},
  FJOURNAL = {Quarterly of Applied Mathematics},
    VOLUME = {81},
      YEAR = {2023},
    NUMBER = {2},
     PAGES = {297--306},
      ISSN = {0033-569X,1552-4485},
   MRCLASS = {35Q30 (35K61 35Q35)},
  MRNUMBER = {4592095},
}

@article {BeFeOsch2023,
    AUTHOR = {Bella, P. and Feireisl, E. and Oschmann, F.},
     TITLE = {Rigorous derivation of the {O}berbeck--{B}oussinesq
              approximation revealing unexpected term},
   JOURNAL = {Comm. Math. Phys.},
  FJOURNAL = {Communications in Mathematical Physics},
    VOLUME = {403},
      YEAR = {2023},
    NUMBER = {3},
     PAGES = {1245--1273},
      ISSN = {0010-3616,1432-0916},
   MRCLASS = {35Q35 (76N10)},
  MRNUMBER = {4652900},
MRREVIEWER = {Roberta\ Bianchini},
       DOI = {10.1007/s00220-023-04823-5},
       URL = {https://doi.org/10.1007/s00220-023-04823-5},
}

@article {FeRoSchi2024,
    AUTHOR = {Feireisl, E. and Rocca, E. and Schimperna, G.},
     TITLE = {The {O}berbeck--{B}oussinesq approximation and
              {R}ayleigh-{B}\'enard convection revisited},
   JOURNAL = {Discrete Contin. Dyn. Syst.},
  FJOURNAL = {Discrete and Continuous Dynamical Systems. Series A},
    VOLUME = {44},
      YEAR = {2024},
    NUMBER = {8},
     PAGES = {2387--2402},
      ISSN = {1078-0947,1553-5231},
   MRCLASS = {76D05 (35B40 35B50 35Q35 45K05 76R05)},
  MRNUMBER = {4762604},
       DOI = {10.3934/dcds.2024032},
       URL = {https://doi.org/10.3934/dcds.2024032},
}

@article {FeGwiaKwonSwie2023,
    AUTHOR = {Feireisl, E. and Gwiazda, P. and Kwon, Y.-S. and
              \'Swierczewska-Gwiazda, A.},
     TITLE = {Mathematical theory of compressible magnetohydrodynamics
              driven by non-conservative boundary conditions},
   JOURNAL = {J. Math. Fluid Mech.},
  FJOURNAL = {Journal of Mathematical Fluid Mechanics},
    VOLUME = {25},
      YEAR = {2023},
    NUMBER = {4},
     PAGES = {Paper No. 84, 27},
      ISSN = {1422-6928,1422-6952},
   MRCLASS = {76N10 (35D30 76W05)},
  MRNUMBER = {4651052},
MRREVIEWER = {Junyan\ Zhang},
       DOI = {10.1007/s00021-023-00827-2},
       URL = {https://doi.org/10.1007/s00021-023-00827-2},
}

@article {AbBuKa2024,
    AUTHOR = {Abbatiello, A. and Bul\'{\i}\v{c}ek, M. and
              Kaplick\'{y}, P.},
     TITLE = {On the exponential decay in time of solutions to a generalized
              {N}avier--{S}tokes--{F}ourier system},
   JOURNAL = {J. Differential Equations},
  FJOURNAL = {Journal of Differential Equations},
    VOLUME = {379},
      YEAR = {2024},
     PAGES = {762--793},
      ISSN = {0022-0396,1090-2732},
   MRCLASS = {35Q30 (35K61 37L15 76A05 76E30)},
  MRNUMBER = {4664386},
MRREVIEWER = {Florian\ Oschmann},
       DOI = {10.1016/j.jde.2023.10.036},
       URL = {https://doi.org/10.1016/j.jde.2023.10.036},
}

@book {RoRoSa2016,
    AUTHOR = {Robinson, J. C. and Rodrigo, J. L. and Sadowski,
              W.},
     TITLE = {The three-dimensional {N}avier--{S}tokes equations},
    SERIES = {Cambridge Studies in Advanced Mathematics},
    VOLUME = {157},
      NOTE = {Classical theory},
 PUBLISHER = {Cambridge University Press, Cambridge},
      YEAR = {2016},
     PAGES = {xiv+471},
      ISBN = {978-1-107-01966-9},
   MRCLASS = {76-02 (35Q30 76D05)},
  MRNUMBER = {3616490},
MRREVIEWER = {Jean\ C.\ Cortissoz},
       DOI = {10.1017/CBO9781139095143},
       URL = {https://doi.org/10.1017/CBO9781139095143},
}

@article {Ne1962,
    AUTHOR = {Ne\v{c}as, J.},
     TITLE = {Sur une m\'{e}thode pour r\'{e}soudre les \'{e}quations aux
              d\'{e}riv\'{e}es partielles du type elliptique, voisine de la
              variationnelle},
   JOURNAL = {Ann. Scuola Norm. Sup. Pisa Cl. Sci. (3)},
  FJOURNAL = {Annali della Scuola Normale Superiore di Pisa. Classe di
              Scienze. Serie III},
    VOLUME = {16},
      YEAR = {1962},
     PAGES = {305--326},
      ISSN = {0391-173X},
   MRCLASS = {35.43},
  MRNUMBER = {163054},
MRREVIEWER = {J.\ L.\ Lions},
}

@book {HaLiPo1952,
    AUTHOR = {Hardy, G. H. and Littlewood, J. E. and P\'{o}lya, G.},
     TITLE = {Inequalities},
      NOTE = {2d ed},
 PUBLISHER = {Cambridge, at the University Press, },
      YEAR = {1952},
     PAGES = {xii+324},
   MRCLASS = {27.0X},
  MRNUMBER = {46395},
}

@article {KiMa1997,
    AUTHOR = {Kinnunen, J. and Martio, O.},
     TITLE = {Hardy's inequalities for {S}obolev functions},
   JOURNAL = {Math. Res. Lett.},
  FJOURNAL = {Mathematical Research Letters},
    VOLUME = {4},
      YEAR = {1997},
    NUMBER = {4},
     PAGES = {489--500},
      ISSN = {1073-2780},
   MRCLASS = {46E35 (26D15 42B25)},
  MRNUMBER = {1470421},
MRREVIEWER = {Takahide\ Kurokawa},
       DOI = {10.4310/MRL.1997.v4.n4.a6},
       URL = {https://doi.org/10.4310/MRL.1997.v4.n4.a6},
}

@article {BreDieSchwa2013,
    AUTHOR = {Breit, D. and Diening, L. and Schwarzacher, S.},
     TITLE = {Solenoidal {L}ipschitz truncation for parabolic {PDE}s},
   JOURNAL = {Math. Models Methods Appl. Sci.},
  FJOURNAL = {Mathematical Models and Methods in Applied Sciences},
    VOLUME = {23},
      YEAR = {2013},
    NUMBER = {14},
     PAGES = {2671--2700},
      ISSN = {0218-2025,1793-6314},
   MRCLASS = {35Q35 (35A01 35D30 76B03)},
  MRNUMBER = {3119635},
       DOI = {10.1142/S0218202513500437},
       URL = {https://doi.org/10.1142/S0218202513500437},
}

@article {DieRuWo2010,
    AUTHOR = {Diening, L. and Rů\v{z}i\v{c}ka, M. and Wolf, J.},
     TITLE = {Existence of weak solutions for unsteady motions of
              generalized {N}ewtonian fluids},
   JOURNAL = {Ann. Sc. Norm. Super. Pisa Cl. Sci. (5)},
  FJOURNAL = {Annali della Scuola Normale Superiore di Pisa. Classe di
              Scienze. Serie V},
    VOLUME = {9},
      YEAR = {2010},
    NUMBER = {1},
     PAGES = {1--46},
      ISSN = {0391-173X},
   MRCLASS = {76D03 (34A34 35D30 35Q35 76A05)},
  MRNUMBER = {2668872},
MRREVIEWER = {Miroslav Bul\'{\i}\v{c}ek},
}

@article {BuFeMa2009,
    AUTHOR = {Bul\'{\i}\v{c}ek, M. and Feireisl, E. and M\'{a}lek, J.},
     TITLE = {A {N}avier--{S}tokes--{F}ourier system for incompressible fluids
              with temperature dependent material coefficients},
   JOURNAL = {Nonlinear Anal. Real World Appl.},
  FJOURNAL = {Nonlinear Analysis. Real World Applications. An International
              Multidisciplinary Journal},
    VOLUME = {10},
      YEAR = {2009},
    NUMBER = {2},
     PAGES = {992--1015}}

@article {DosPruRaj,
    AUTHOR = {Dostal\'{\i}k, M. and Pr\r{u}\v{s}a, V. and Rajagopal, K. R.},
     TITLE = {Unconditional finite amplitude stability of a fluid in a
              mechanically isolated vessel with spatially non-uniform wall
              temperature},
   JOURNAL = {Contin. Mech. Thermodyn.},
  FJOURNAL = {Continuum Mechanics and Thermodynamics},
    VOLUME = {33},
      YEAR = {2021},
    NUMBER = {2},
     PAGES = {515--543}}

@article {BuMaRa09,
    AUTHOR = {Bul\'{\i}\v{c}ek, M. and M\'{a}lek, J. and Rajagopal, K. R.},
     TITLE = {Mathematical analysis of unsteady flows of fluids with
              pressure, shear-rate, and temperature dependent material
              moduli that slip at solid boundaries},
   JOURNAL = {SIAM J. Math. Anal.},
  FJOURNAL = {SIAM Journal on Mathematical Analysis},
    VOLUME = {41},
      YEAR = {2009},
    NUMBER = {2},
     PAGES = {665--707},
      ISSN = {0036-1410},
   MRCLASS = {76D03 (35Q35 35Q74 76A05)},
  MRNUMBER = {2515781},
MRREVIEWER = {Ilia S. Mogilevski\u{\i}},
       DOI = {10.1137/07069540X},
       URL = {https://doi.org/10.1137/07069540X},
}

@article {ABK22,
    AUTHOR = {Abbatiello, A. and Bul\'{\i}\v{c}ek, M. and Kaplick\'{y}, P.},
     TITLE = {On solutions for a generalized {N}avier--{S}tokes--{F}ourier
              system fulfilling the entropy equality},
   JOURNAL = {Philos. Trans. Roy. Soc. A},
  FJOURNAL = {Philosophical Transactions of the Royal Society A.
              Mathematical, Physical and Engineering Sciences},
    VOLUME = {380},
      YEAR = {2022},
    NUMBER = {2236},
     PAGES = {Paper No. 20210351, 15}}

@article {A21,
    AUTHOR = {Abbatiello, A.},
     TITLE = {Time-periodic weak solutions to incompressible generalized
              {N}ewtonian fluids},
   JOURNAL = {J. Math. Fluid Mech.},
  FJOURNAL = {Journal of Mathematical Fluid Mechanics},
    VOLUME = {23},
      YEAR = {2021},
    NUMBER = {3},
     PAGES = {Paper No. 63, 20}}

@article {AM19,
    AUTHOR = {Abbatiello, A. and Maremonti, P.},
     TITLE = {Existence of regular time-periodic solutions to shear-thinning
              fluids},
   JOURNAL = {J. Math. Fluid Mech.},
  FJOURNAL = {Journal of Mathematical Fluid Mechanics},
    VOLUME = {21},
      YEAR = {2019},
    NUMBER = {2},
     PAGES = {Paper No. 29, 14}}

@article {BuMoSz21,
    AUTHOR = {Burczak, J. and Modena, S. and Sz\'ekelyhidi,
              L.},
     TITLE = {Non uniqueness of power-law flows},
   JOURNAL = {Comm. Math. Phys.},
  FJOURNAL = {Communications in Mathematical Physics},
    VOLUME = {388},
      YEAR = {2021},
    NUMBER = {1},
     PAGES = {199--243},
      ISSN = {0010-3616,1432-0916},
   MRCLASS = {76A05 (35Q35)},
  MRNUMBER = {4328053},
MRREVIEWER = {Dalibor\ Pra\v z\'ak},
       DOI = {10.1007/s00220-021-04231-7},
       URL = {https://doi.org/10.1007/s00220-021-04231-7},
}

@article {BuKaPr19,
    AUTHOR = {Bul\'{\i}\v{c}ek, M. and Kaplick\'y, P. and Pra\v{z}\'{a}k,
              D.},
     TITLE = {Uniqueness and regularity of flows of non-{N}ewtonian fluids
              with critical power-law growth},
   JOURNAL = {Math. Models Methods Appl. Sci.},
  FJOURNAL = {Mathematical Models and Methods in Applied Sciences},
    VOLUME = {29},
      YEAR = {2019},
    NUMBER = {6},
     PAGES = {1207--1225},
      ISSN = {0218-2025,1793-6314},
   MRCLASS = {76A05 (35Q35 76D03)},
  MRNUMBER = {3963639},
MRREVIEWER = {Jos\'e\ Alberto\ Iglesias Mart\'inez},
       DOI = {10.1142/S0218202519500209},
       URL = {https://doi.org/10.1142/S0218202519500209},
}

@article {BuMaPr19,
    AUTHOR = {Bul\'{\i}\v{c}ek, M. and M\'{a}lek, J. and Pr\r{u}\v{s}a,
              V.},
     TITLE = {Thermodynamics and stability of non-equilibrium steady states
              in open systems},
   JOURNAL = {Entropy},
  FJOURNAL = {Entropy. An International and Interdisciplinary Journal of
              Entropy and Information Studies},
    VOLUME = {21},
      YEAR = {2019},
    NUMBER = {7},
     PAGES = {Paper No. 704, 30},
      ISSN = {1099-4300},
   MRCLASS = {80A10},
  MRNUMBER = {3988361},
       DOI = {10.3390/e21070704},
       URL = {https://doi.org/10.3390/e21070704},
}

@article {FeLuSu2024,

    AUTHOR = {Feireisl, E. and Lu, Y. and Sun, Y.},
     TITLE = {Unconditional stability of equilibria in thermally driven
              compressible fluids},
   JOURNAL = {Arch. Ration. Mech. Anal.},
  FJOURNAL = {Archive for Rational Mechanics and Analysis},
    VOLUME = {248},
      YEAR = {2024},
    NUMBER = {6},
     PAGES = {Paper No. 98, 38},
      ISSN = {0003-9527,1432-0673},
   MRCLASS = {76R10 (35B50 35D30 35Q30 76N06)},
  MRNUMBER = {4810457},
       DOI = {10.1007/s00205-024-02044-1},
       URL = {https://doi.org/10.1007/s00205-024-02044-1},
}

@article {AbFe2020,
    AUTHOR = {Abbatiello, A. and Feireisl, E.},
     TITLE = {On a class of generalized solutions to equations describing
              incompressible viscous fluids},
   JOURNAL = {Ann. Mat. Pura Appl. (4)},
  FJOURNAL = {Annali di Matematica Pura ed Applicata. Series IV},
    VOLUME = {199},
      YEAR = {2020},
    NUMBER = {3},
     PAGES = {1183--1195},
      ISSN = {0373-3114,1618-1891},
   MRCLASS = {76D03 (35A01 35A02 35D30 35Q35)},
  MRNUMBER = {4102807},
       DOI = {10.1007/s10231-019-00917-x},
       URL = {https://doi.org/10.1007/s10231-019-00917-x},
}

@article {AbBuLe24,
    AUTHOR = {Abbatiello, A. and Bul\'{\i}\v{c}ek, M. and Lear, D.},
     TITLE = {On the existence of solutions to generalized
              {N}avier-{S}tokes-{F}ourier system with dissipative heating},
   JOURNAL = {Meccanica},
  FJOURNAL = {Meccanica. An International Journal of Theoretical and Applied
              Mechanics},
    VOLUME = {59},
      YEAR = {2024},
    NUMBER = {10},
     PAGES = {1703--1730},
      ISSN = {0025-6455,1572-9648},
   MRCLASS = {35Q30 (35K61 76E30)},
  MRNUMBER = {4838520},
       DOI = {10.1007/s11012-024-01791-5},
       URL = {https://doi.org/10.1007/s11012-024-01791-5},
}

@article {CaKoNi82,
    AUTHOR = {Caffarelli, L. and Kohn, R. and Nirenberg, L.},
     TITLE = {Partial regularity of suitable weak solutions of the
              {N}avier-{S}tokes equations},
   JOURNAL = {Comm. Pure Appl. Math.},
  FJOURNAL = {Communications on Pure and Applied Mathematics},
    VOLUME = {35},
      YEAR = {1982},
    NUMBER = {6},
     PAGES = {771--831},
      ISSN = {0010-3640,1097-0312},
   MRCLASS = {35Q10 (76D05)},
  MRNUMBER = {673830},
MRREVIEWER = {Tai\ Ping\ Liu},
       DOI = {10.1002/cpa.3160350604},
       URL = {https://doi.org/10.1002/cpa.3160350604},
}

@book {Fe04,
    AUTHOR = {Feireisl, E.},
     TITLE = {Dynamics of viscous compressible fluids},
    SERIES = {Oxford Lecture Series in Mathematics and its Applications},
    VOLUME = {26},
 PUBLISHER = {Oxford University Press, Oxford},
      YEAR = {2004},
     PAGES = {xii+212},
      ISBN = {0-19-852838-8},
   MRCLASS = {76N10 (35Q35 76N15)},
  MRNUMBER = {2040667},
MRREVIEWER = {Piotr\ Bogus\l aw\ Mucha},
}

@article {AbLiSc24,
    AUTHOR = {Abedi, E. and Li, Z. and Schultz, T.},
     TITLE = {Absolutely continuous and {BV}-curves in 1-{W}asserstein
              spaces},
   JOURNAL = {Calc. Var. Partial Differential Equations},
  FJOURNAL = {Calculus of Variations and Partial Differential Equations},
    VOLUME = {63},
      YEAR = {2024},
    NUMBER = {1},
     PAGES = {Paper No. 16, 34},
      ISSN = {0944-2669,1432-0835},
   MRCLASS = {49Q22 (26A45 49J27)},
  MRNUMBER = {4673706},
MRREVIEWER = {Marcello\ Carioni},
       DOI = {10.1007/s00526-023-02616-1},
       URL = {https://doi.org/10.1007/s00526-023-02616-1},
}

@article {Am90,
    AUTHOR = {Ambrosio, L.},
     TITLE = {Metric space valued functions of bounded variation},
   JOURNAL = {Ann. Scuola Norm. Sup. Pisa Cl. Sci. (4)},
  FJOURNAL = {Annali della Scuola Normale Superiore di Pisa. Classe di
              Scienze. Serie IV},
    VOLUME = {17},
      YEAR = {1990},
    NUMBER = {3},
     PAGES = {439--478},
      ISSN = {0391-173X,2036-2145},
   MRCLASS = {26A45 (46E40)},
  MRNUMBER = {1079985},
MRREVIEWER = {A.\ Precupanu},
       URL = {http://www.numdam.org/item?id=ASNSP_1990_4_17_3_439_0},
}

@article {BeMaRa20,
    AUTHOR = {Blechta, J. and M\'alek, J. and Rajagopal, K. R.},
     TITLE = {On the classification of incompressible fluids and a
              mathematical analysis of the equations that govern their
              motion},
   JOURNAL = {SIAM J. Math. Anal.},
  FJOURNAL = {SIAM Journal on Mathematical Analysis},
    VOLUME = {52},
      YEAR = {2020},
    NUMBER = {2},
     PAGES = {1232--1289},
      ISSN = {0036-1410,1095-7154},
   MRCLASS = {76A02 (35Q35 76D03)},
  MRNUMBER = {4076814},
MRREVIEWER = {Piotr\ Biler},
       DOI = {10.1137/19M1244895},
       URL = {https://doi.org/10.1137/19M1244895},
}

@article {BuGwMaSw12,
    AUTHOR = {Bul\'{\i}\v{c}ek, M. and Gwiazda, P. and M\'{a}lek, J.
              and \'Swierczewska-Gwiazda, A.},
     TITLE = {On unsteady flows of implicitly constituted incompressible
              fluids},
   JOURNAL = {SIAM J. Math. Anal.},
  FJOURNAL = {SIAM Journal on Mathematical Analysis},
    VOLUME = {44},
      YEAR = {2012},
    NUMBER = {4},
     PAGES = {2756--2801},
      ISSN = {0036-1410,1095-7154},
   MRCLASS = {35Q30 (35D30 76D03)},
  MRNUMBER = {3023393},
       DOI = {10.1137/110830289},
       URL = {https://doi.org/10.1137/110830289},
}

@book {Leoni09,
    AUTHOR = {Leoni, G.},
     TITLE = {A first course in {S}obolev spaces},
    SERIES = {Graduate Studies in Mathematics},
    VOLUME = {105},
 PUBLISHER = {American Mathematical Society, Providence, RI},
      YEAR = {2009},
     PAGES = {xvi+607},
      ISBN = {978-0-8218-4768-8},
   MRCLASS = {46E35 (26Axx 26B30 28A78 46-01)},
  MRNUMBER = {2527916},
MRREVIEWER = {Dorothee\ D.\ Haroske},
       DOI = {10.1090/gsm/105},
       URL = {https://doi.org/10.1090/gsm/105},
}
\end{document}